\newcommand{\tr}{\text{tr}}
\newtheorem{thm}{Theorem}[section]
\newtheorem{cor}[thm]{Corollary}
\newtheorem{lem}[thm]{Lemma}
\newtheorem{prop}[thm]{Proposition}
\newtheorem{defn}[thm]{Definition}
\numberwithin{equation}{section}
\def\tr{\mbox{tr}}
\def\YMH{\mbox{YMH}}
\def\HYM{\mbox{HYM}}
\begin{document}

\title{\bf The Limit of the Yang--Mills--Higgs Flow for twisted Higgs pairs}
\author{Changpeng Pan, Zhenghan Shen and Pan Zhang}

\address{Changpeng Pan\\School of Mathematics and Statistics\\
Nanjing University of Science and Technology\\
Nanjing, 210094,P.R. China\\ }
\email{mathpcp@njust.edu.cn}

\address{Zhenghan Shen\\School of Mathematics and Statistics\\
Nanjing University of Science and Technology\\
Nanjing, 210094,P.R. China\\}\email{mathszh@njust.edu.cn}

\address{Pan Zhang\\School of Mathematical Sciences\\
Anhui University\\
Hefei, 230601, P.R. China\\}\email{panzhang20100@ahu.edu.cn}

\subjclass[2020]{53C07, 58E15}
\keywords{Yang--Mills--Higgs flow, twisted Higgs pair, Harder--Narasimhan--Seshadri filtration}

\maketitle

\begin{abstract}
In this paper, we consider the Yang--Mills--Higgs flow for twisted Higgs pairs over K\"ahler manifolds. We prove that this flow converges to a reflexive twisted Higgs sheaf outside a closed subset of codimension $4$, and the limiting twisted Higgs sheaf is isomorphic to the double dual of the graded twisted Higgs sheaves associated to the Harder--Narasimhan--Seshadri filtration of the initial twisted Higgs bundle.
\end{abstract}

\vskip 0.2 true cm


\pagestyle{myheadings}
\markboth{\rightline {\scriptsize C. Pan et al.}}
         {\leftline{\scriptsize Yang--Mills--Higgs Flow for Twisted Higgs Pairs}}

\bigskip
\bigskip


\section{ Introduction}
Let $(X,\omega)$ be a compact K\"ahler manifold, $(V,h_{V})$ be a Hermitian holomorphic vector bundle over $X$. A $V$-twisted Higgs bundle is a pair $(E,\phi)$, where $E$ is a holomorphic vector bundle and $\phi:E\rightarrow V\otimes E$ is a holomorphic morphism  satisfying $0=\phi\wedge\phi\in \mbox{End}(E)\otimes\wedge^{2}V$. For untwisted Higgs bundle (i.e. $V=T^{*}X$), it was first studied by Hitchin (\cite{Hit}) on Riemann surface, and by Simpson (\cite{S1,S2}) on K\"ahler manifold.
There are many interesting research about twisted Higgs bundles (see \cite{Bo, GGN, GR, GRa, Ni}, etc.).
Let $H$ be a Hermitian metric on $E$.  Then we can define a dual morphism $\phi^{*H}:E\otimes V\rightarrow E$ by using the Hermitian metrics $H$ and $h_{V}$. Let $[\phi,\phi^{*H}]=\phi\circ\phi^{*H}-\phi^{*H}\circ\phi\in \mbox{End}(E)$.
A Hermitian metric $H$ is said to be  Higgs--Hermitian--Einstein if it satisfies
\begin{equation}
	\sqrt{-1}\Lambda_{\omega}F_{\bar{\partial}_{E},H}+[\phi,\phi^{*H}]=\lambda \cdot \textmd{Id}_{E},
\end{equation}
where $F_{\bar{\partial}_{E},H}$ is the curvature form of the Chern connection $D_{\bar{\partial}_{E},H}$ and $\lambda=\frac{2\pi\mu_{\omega}(E)}{\mbox{Vol}(X,\omega)}$ is a constant. The Donaldson--Uhlenbeck--Yau theorem for twisted Higgs bundles (\cite{AG,BGM,Hit,S1}) guarantees the existence of Higgs--Hermitian--Einstein metrics for the polystable case. It was originally proved by Narasimhan--Seshadri (\cite{NarSe}), Donaldson (\cite{Don1,Don2}) and Uhlenbeck--Yau (\cite{UhYau}) for holomorphic bundles. There are also many interesting and important generalized Donaldson--Uhlenbeck--Yau theorems (see \cite{BLS,BGP,BGPR,BO,iR,LY,MA,Mo,SZZ,WZ,ZZZ}  and references therein).

\medskip

Given a fixed Hermitian metric $H$ on $E$. Let $\mathcal{A}^{1,1}_{H}$ be the space of integrable unitary connections, and $\mathcal{B}_{H}$ be the space of $V$-twisted Higgs pairs. The Yang--Mills--Higgs functional on $\mathcal{B}_{H}$ is defined by
\begin{equation}
	\YMH(A,\phi)=\int_{X}(|F_{A}|^{2}+2|\partial_{A,V}\phi|^{2}+|[\phi,\phi^{*}]|^{2}-2\langle\phi,\phi\rangle_{V})dv_{g},
\end{equation}
where $dv_{g}=\frac{\omega^{n}}{n!}$, $\langle\phi,\phi\rangle_{V}=\tr(\phi \sqrt{-1}\Lambda_{\omega}F_{h_{V}}\phi^{*H})$. The critical point of the Yang--Mills--Higgs functional satisfies
\begin{equation}\label{YMHe}
	\left\{\begin{split}
		&D_{A}^{*}F_{A}+(\partial_{A}-\bar{\partial}_{A})[\phi,\phi^{*H}]=0,\\
		&[\sqrt{-1}\Lambda_{\omega}F_{A}+[\phi,\phi^{*H}],\phi]=0.
	\end{split}\right.
\end{equation}
We say that $(A,\phi)\in\mathcal{B}_{H}$ is a Yang--Mills--Higgs pair if it is a critical point of the Yang--Mills--Higgs functional. By the K\"ahler identities, we know that if $(A,\phi)$ satisfies $\sqrt{-1}\Lambda_{\omega}F_{A}+[\phi,\phi^{*H}]=\lambda \cdot {\rm Id}_{E}$, then it is a Yang--Mills--Higgs pair and $H$ is the Higgs--Hermitian--Einstein metric of $V$-twisted Higgs bundle $(E,\bar{\partial}_{A},\phi)$. The gradient flow of the Yang--Mills--Higgs functional is
\begin{equation}\label{mymh}
	\left\{
	\begin{split}
		\frac{\partial A(t)}{\partial  t}&=-D_{A(t)}^{*}F_{A(t)}-(\partial_{A(t)}-\bar{\partial}_{A(t)})[\phi(t),\phi^{*H}(t)],  \\
		\frac{\partial \phi(t)}{\partial  t}&=-[\sqrt{-1}\Lambda_{\omega}F_{A(t)}+[\phi(t),\phi^{*H}(t)],\phi(t)].
	\end{split}\right.
\end{equation}
The existence of long time solution for the above gradient flow will be discussed in Section \ref{sec:Pre}. If the initial data $(A_{0},\phi_{0})\in\mathcal{B}_{H}$ is stable, the  heat flow converges to a $V$-twisted Higgs pair and the limit must lie in the same orbit of the initial data. In this article, we are interested in the convergence of this flow in the general case.

\medskip

Let $(E,\bar{\partial}_{A},\phi)$ be a $V$-twisted Higgs bundle. There is a filtration of $(E,\bar{\partial}_{A})$ given by $\phi$-invariant subsheaves which is called Harder--Narasimhan--Seshadri (abbr. HNS) filtration. Let $Gr^{HNS}(E,\bar{\partial}_{A},\phi)$ be the associated graded
object (the direct sum of the stable quotients) of the Harder--Narasimhan--Seshadri filtration.
For the holomorphic vector bundle, Atiyah--Bott (\cite{AB}) and Bando--Siu (\cite{BS}) conjectured that there should be a correspondence between the limit of the Yang--Mills flow and the double dual of $Gr^{HNS}(E,\bar{\partial}_{A})$. It was proved by Daskalopoulos (\cite{Da}), Daskalopoulos--Wentworth (\cite{DW}) Jacob (\cite{Ja}) and Sibley (\cite{Si}) in different cases. For the untwisted Higgs bundle, Wilkin (\cite{W}) and Li--Zhang (\cite{LZ,LZ1}) also proved the similar correspondence between Yang--Mills--Higgs flow and the double dual of $Gr^{HNS}(E,\bar{\partial}_{A},\phi)$. In \cite{LZZ}, the authors proved this conjecture for reflexive sheaves. In the meanwhile, Zhang (\cite{Z}) considered this problem for $T^{*}X\otimes L$-twisted Higgs bundles over Riemann surface, and he proved the related correspondence in that case. In the present paper, we extend the above results to $V$-twisted Higgs bundles over K\"ahler manifolds. In fact, we prove the following theorem.

\begin{thm}\label{thm:main1}
	Let $(A(t),\phi(t))$ be a solution of Yang--Mills--Higgs flow (\ref{mymh}) with initial data $(A_{0},\phi_{0})\in\mathcal{B}_{H}$. Then we have:
	\begin{itemize}
		\item[(1)] for every sequence $t_{k}\rightarrow+\infty$, there is a subsequence $t_{k_{j}}$ such that as $t_{k_{j}}\rightarrow+\infty$, $(A(t_{k_{j}}),\phi(t_{k_{j}}))$ converges modulo gauge transformations to a pair $(A_{\infty},\phi_{\infty})$ satisfying (\ref{YMHe}) on the Hermitian vector bundle $(E_{\infty}, H_{\infty})$ in $C_{loc}^{\infty}$ topology outside $\Sigma$, where $\Sigma$ is a closed set of Hausdorff codimension at least 4. The limiting $(E_{\infty}, \bar{\partial}_{A_{\infty}},\phi_{\infty})$ can be extended to the whole $X$ as a reflexive $V$-twisted Higgs sheaf with a holomorphic orthogonal splitting
		\begin{equation}
		(E_{\infty}, H_{\infty}, \bar{\partial}_{A_{\infty}},\phi_{\infty})=\oplus_{i=1}^{l}(E_{\infty}^{i}, H_{\infty}^{i}, \bar{\partial}_{A_{\infty}^{i}},\phi_{\infty}^{i}),
		\end{equation}
	    where $H_{\infty}^{i}$ is an admissible Higgs--Hermitian--Einstein metric on the reflexive $V$-twisted Higgs sheaf $(E_{\infty}^{i}, \bar{\partial}_{A_{\infty}^{i}},\phi_{\infty}^{i})$.
		\item[(2)] let $\{E_{i,j}\}$ be the HNS filtration of the $V$-twisted Higgs bundle $(E, \bar{\partial}_{A_{0}}, \phi_{0})$, the associated graded object $Gr^{HNS}(E,A_{0},\phi_{0})=\oplus _{i=1}^{l}\oplus_{j=1}^{l_{i}} Q_{i,j}$ be uniquely determined by the isomorphism class of $(A_{0}, \phi_{0})$. We have $Gr^{HNS}(E,A_{0},\phi_{0})^{**}\simeq (E_{\infty},\bar{\partial}_{A_{\infty}},\phi_{\infty})$.
	\end{itemize}
\end{thm}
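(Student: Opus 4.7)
The plan is to adapt the three-stage strategy of Bando--Siu, Daskalopoulos--Wentworth, Sibley and Li--Zhang--Zhang to the $V$-twisted Higgs setting. First, from the long-time existence established in Section \ref{sec:Pre}, the monotonicity
\[
\frac{d}{dt}\YMH(A(t),\phi(t))=-\int_X\Bigl(\Bigl|\tfrac{\partial A}{\partial t}\Bigr|^2+\Bigl|\tfrac{\partial\phi}{\partial t}\Bigr|^2\Bigr)\,dv_g
\]
gives a uniform $L^2$ bound on $F_{A(t)}$ and $\partial_{A(t),V}\phi(t)$. A maximum principle applied to $|\sqrt{-1}\Lambda_\omega F_{A(t)}+[\phi(t),\phi(t)^{*H}]|^2$ and to $|\phi(t)|^2$, each satisfying a suitable sub-heat inequality, then yields uniform $L^\infty$ bounds on the Einstein quantity and on $\phi(t)$. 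The twist $V$ enters only through the fixed curvature $F_{h_V}$ and the coupling $\lag\phi,\phi\rag_V$, which are absorbed as bounded lower-order terms.

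For any sequence $t_k\to\infty$, Uhlenbeck weak compactness in the version for Higgs pairs (cf.\ Hong--Tian, Sibley, \cite{LZ,LZ1,LZZ}) extracts a subsequence converging modulo unitary gauge in $L^2_{1,loc}$ to a limit $(A_\infty,\phi_\infty)$ on $X\setminus\Sigma$, where $\Sigma$ is the curvature concentration set. A Price/Tian monotonicity argument applied to the coupled energy density $|F_{A(t)}|^2+2|\partial_{A(t),V}\phi(t)|^2$ shows $\Sigma$ has Hausdorff codimension at least $4$. Elliptic regularity upgrades the convergence to $C^\infty_{loc}(X\setminus\Sigma)$, so $(A_\infty,\phi_\infty)$ satisfies (\ref{YMHe}) smoothly there with $\sqrt{-1}\Lambda_\omega F_{A_\infty}+[\phi_\infty,\phi_\infty^{*H_\infty}]$ covariantly constant; its spectral decomposition splits $E_\infty|_{X\setminus\Sigma}$ orthogonally into $\bar\partial_{A_\infty}$- and $\phi_\infty$-invariant summands $E_\infty^i$ of distinct constant eigenvalues $\lambda_i$. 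A Bando--Siu type removable singularity theorem, extended to $V$-twisted Higgs pairs (cf.\ \cite{BS,LZZ}), then extends each $(E_\infty^i,\bar\partial_{A_\infty^i},\phi_\infty^i)$ across $\Sigma$ as a reflexive twisted Higgs sheaf carrying an admissible Higgs--Hermitian--Einstein metric $H_\infty^i$, completing part (1).

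For part (2), I would write $\bar\partial_{A(t)}=g(t)\cdot\bar\partial_{A_0}$ for a complex gauge $g(t)$ and, along each term $E_{i,j}$ of the HNS filtration, produce normalized $L^2$-bounded trial bundle morphisms into $(E,\bar\partial_{A(t)},\phi(t))$. Passing to the limit using the $C^\infty_{loc}$ convergence of the previous step yields, after a further subsequence, nonzero sheaf morphisms $Q_{i,j}\to E_\infty$ intertwining $\phi_0$ with $\phi_\infty$. Slope inequalities for $\phi$-invariant subsheaves of polystable twisted Higgs sheaves, together with coincidence of Chern characters between $\oplus_{i,j}Q_{i,j}^{**}$ and $\oplus_i E_\infty^i$ (valid on $X\setminus\Sigma$ and extended across $\Sigma$ by codimension), force these morphisms to assemble into the desired isomorphism $Gr^{HNS}(E,A_0,\phi_0)^{**}\simeq(E_\infty,\bar\partial_{A_\infty},\phi_\infty)$. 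The main obstacle lies in this final step: the complex gauges $g(t)$ need not stay bounded, so their renormalization together with $L^p$ control of $\phi(t)$ must be chosen so that both $\bar\partial$-holomorphicity and $\phi$-equivariance survive the limit; the $V$-twist modifies every Weitzenb\"ock identity and slope computation through $F_{h_V}$, and the codimension-$4$ bound on $\Sigma$ is essential for extending the algebraic sheaf morphisms across the singular locus.
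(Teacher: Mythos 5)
Your outline of part (1) matches the paper's: energy identity, parabolic monotonicity of $\Phi(r)$ in the Hong--Tian style, $\epsilon$-regularity, Uhlenbeck compactness off a closed $\Sigma$ of codimension~$\ge 4$, spectral splitting of the parallel Einstein operator $\theta_\infty$, and Bando--Siu extension of each summand. This is essentially the route taken in Section~\ref{sec:Con}.

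Part (2), however, has a genuine gap in the way you try to force the limiting isomorphism. You propose to combine ``slope inequalities for $\phi$-invariant subsheaves'' with ``coincidence of Chern characters between $\oplus Q_{i,j}^{**}$ and $\oplus E_\infty^i$.'' What these two ingredients actually give is (a) the dominance-order inequality $\vec{\mu}(E,\bar\partial_{A_0},\phi_0)\le\vec{\lambda}_\infty$ (Proposition~\ref{prop4:5} together with Corollary~\ref{cor}), and (b) equality of the total degree $\sum\mu_i=\sum\lambda_i$ from the trace identity~\eqref{eq2:2}. But dominance plus equal trace do \emph{not} imply $\vec{\mu}=\vec{\lambda}_\infty$: e.g.\ $(1,1)\le(2,0)$ with equal sums. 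Without $\vec{\mu}=\vec{\lambda}_\infty$ you cannot match ranks and slopes of the pieces, so the nonzero $\phi$-equivariant maps $Q_{i,j}\to E_\infty$ cannot be promoted to isomorphisms, and the construction stalls. Closing this gap is in fact the bulk of Section~\ref{sec:Is}: one needs the $L^p$-$\delta$-approximate critical Hermitian metric for the twisted Higgs bundle (built on a Hironaka resolution of the algebraic singular set $\Sigma_{alg}$ of the HNS filtration, with careful $L^p$ estimates passing the approximate metric back down to $X$), the monotonicity of $\HYM_{\alpha,N}(A(t),\phi(t))$ from the convexity inequality~\eqref{eq}, and then Lemmas~\ref{l:DW1}--\ref{l:DW2} to conclude $\HYM_{\alpha,N}(\vec{\mu}_0)=\HYM_{\alpha,N}(A_\infty,\phi_\infty)$ for all $\alpha,N$ and hence $\vec{\mu}_0=\vec{\lambda}_\infty$ (Theorem~\ref{thm:1}). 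Only after that does the construction of nonzero holomorphic maps (Proposition~\ref{pr:1}, done by induction on the length of the HNS filtration with Gauss--Codazzi control of the quotient pair $(A_j^Q,\phi_j^Q)$) deliver the isomorphism. Your remark that the complex gauges $g(t)$ may blow up is the right worry, and the rescaling device in Proposition~\ref{pr:1} handles it; but the missing HN-type equality is the more serious structural omission.

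A secondary imprecision: the existence of $\Sigma_{alg}$ and the need to resolve it before one can even define the approximate critical metric is a separate obstruction from the analytic singular set $\Sigma_{an}$; your proposal only discusses the latter.
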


In order to prove the convergence of the flow, we proved the energy inequality, the monotonicity formula of certain quantities and the $\epsilon$-regularity. Following Hong--Tian's arguments (\cite{HT}) and using Bando--Siu's extension technique (\cite{BS}), we obtain the first part of Theorem \ref{thm:main1}. The proof of the second part of Theorem \ref{thm:main1} can be divided into two steps. The first step is to prove that the Harder--Narasimhan (abbr. HN) type of the limiting $V$-twisted Higgs sheaf is in fact equal to that of $(E,A_{0},\phi_{0})$. The second step is to construct a non-zero $\phi$-invariant holomorphic map from $Q_{i,j}$ to the limiting sheaf. The idea of the proof is the same as for untwisted case (\cite{LZ,LZ1}), but there are some differences in the treatment of certain details.

\medskip

This paper is organized as follows. In Section \ref{sec:Pre}, we build some basic estimates for Donaldson heat flow and the Yang--Mills--Higgs flow for twisted Higgs pairs. In Section \ref{sec:Con}, we prove the monotonicity inequality and the $\epsilon$-regularity estimate for the Yang--Mills--Higgs flow and complete the first part of Theorem \ref{thm:main1}.  In Section \ref{sec:Is}, we first prove that the HN type of the limiting twisted Higgs sheaf is in fact equal to the type of the initial twisted Higgs bundle, and then complete the proof of the second part of Theorem \ref{thm:main1}.

\section{Preliminary}\label{sec:Pre}
Let $(X,\omega)$ be an $n$-dimensional compact K\"ahler manifold, and $(E,\phi)$ be a $V$-twisted Higgs vector bundle over $X$. We consider the following Donaldson heat flow
\begin{equation}\label{DF}
	\left\{\begin{split}
		&H^{-1}(t)\frac{\partial H(t)}{\partial t}=-2(\sqrt{-1}\Lambda_{\omega}F_{H(t)}+[\phi,\phi^{*H(t)}]-\lambda \cdot \textmd{Id}_{E}),\\
		&H(0)=H_{0}.
	\end{split}\right.
\end{equation}
It is a  strictly parabolic equation, so the standard parabolic theory gives the short time existence. And the long time existence can be proved by the  same method of Simpson's article (\cite{S1}). The following lemma can be obtained by direct calculation.

\begin{lem}\label{ll:1}
	Let $H(t)$ be the solution of the flow (\ref{DF}) and set $\Phi(H(t))=\sqrt{-1}\Lambda_{\omega}F_{H(t)}+[\phi,\phi^{*H(t)}]-\lambda \cdot \textmd{Id}_{E}$, then
	\begin{equation}\label{eq2:2}
		\bigg(\frac{\partial}{\partial t}-\Delta\bigg){\rm tr}(\Phi(H(t)))=0,
	\end{equation}
and
	\begin{equation}
		\bigg(\frac{\partial}{\partial t}-\Delta\bigg)|\Phi(H(t))|^{2}_{H(t)}=-4|\bar{\partial}_{E}\Phi(H(t))|^{2}_{\omega,H(t)}-4|[\phi,\Phi(H(t))]|^{2}_{h_{V},H(t)}.
	\end{equation}
\end{lem}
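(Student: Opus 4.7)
The plan is to combine variational identities for $F_{H(t)}$ and $\phi^{*H(t)}$ into a single master evolution equation for $\Phi(H(t))$ and then read off (i) and (ii). Set $v := H^{-1}\partial_t H$, so the flow equation gives $v = -2\Phi$. The standard variation of the Chern curvature (obtained by differentiating $F_H = \bar\partial(H^{-1}\partial H)$ in a fixed holomorphic frame) yields $\partial_t F_{H(t)} = \bar\partial\,\partial_H v$, and a parallel frame-wise computation for the $H$-adjoint (using that $h_V$ is $t$-independent) gives $\partial_t\phi^{*H(t)} = [\phi^{*H},v]$. Combining,
\[
\partial_t\Phi \;=\; -2\sqrt{-1}\,\Lambda_\omega\bar\partial\,\partial_H\Phi \;+\; 2\bigl[\phi,[\Phi,\phi^{*H}]\bigr].
\]

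For (i), I take the trace of this identity. Every commutator is killed by $\tr$, so the Higgs term disappears, and $\tr(\partial_H\Phi) = \partial\,\tr\Phi$ since the connection part of the covariant derivative is itself a commutator. Applying the K\"ahler identity $\Delta f = -2\sqrt{-1}\,\Lambda_\omega\bar\partial\partial f$ to the scalar function $\tr\Phi$ gives $(\partial_t - \Delta)\tr\Phi = 0$.

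For (ii), since $\Phi$ is $H$-Hermitian one has $|\Phi|^2_H = \tr(\Phi^2)$, so $\partial_t|\Phi|^2_H = 2\tr(\Phi\,\partial_t\Phi)$; substituting the master equation,
\[
\partial_t|\Phi|^2_H \;=\; -4\sqrt{-1}\,\Lambda_\omega\tr(\Phi\,\bar\partial\partial_H\Phi) \;+\; 4\tr\bigl(\Phi\,[\phi,[\Phi,\phi^{*H}]]\bigr).
\]
By cyclicity, $\tr(\Phi[\phi,[\Phi,\phi^{*H}]]) = \tr([\phi,\Phi][\phi^{*H},\Phi])$; combined with the identity $([\phi,\Phi])^{*H} = -[\phi^{*H},\Phi]$ (which follows from $(\phi\Phi)^{*H} = \Phi\phi^{*H}$ together with Hermiticity of $\Phi$), this rearranges to $-|[\phi,\Phi]|^2_{h_V,H}$. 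Separately, Leibniz applied to $\tr(\Phi^2)$ gives
\[
\Delta|\Phi|^2_H \;=\; -4\sqrt{-1}\,\Lambda_\omega\tr(\bar\partial\Phi\wedge\partial_H\Phi) \;-\; 4\sqrt{-1}\,\Lambda_\omega\tr(\Phi\,\bar\partial\partial_H\Phi).
\]
Under $(\partial_t - \Delta)$ the $\bar\partial\partial_H\Phi$ terms cancel, leaving $-4|[\phi,\Phi]|^2_{h_V,H} + 4\sqrt{-1}\,\Lambda_\omega\tr(\bar\partial\Phi\wedge\partial_H\Phi)$. Because $\Phi$ is Hermitian, $\partial_H\Phi = (\bar\partial\Phi)^{*H}$, and the pointwise K\"ahler identity then evaluates $\sqrt{-1}\,\Lambda_\omega\tr(\bar\partial\Phi\wedge\partial_H\Phi) = -|\bar\partial_E\Phi|^2_{\omega,H}$, which closes the calculation.

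The main obstacle is purely bookkeeping: tracking the factors of $\sqrt{-1}$ and signs in the K\"ahler identities (in particular the convention $\Delta = -2\sqrt{-1}\,\Lambda_\omega\bar\partial\partial$), and correctly interpreting the $H$-adjoint of the $V$-valued Higgs field via the metric $h_V$; once the conventions are fixed, every step above is a routine algebraic manipulation.
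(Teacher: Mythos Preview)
Your proof is correct and is precisely the direct calculation the paper has in mind; the paper itself gives no argument beyond the sentence ``The following lemma can be obtained by direct calculation,'' and your master evolution identity $\partial_t\Phi = -2\sqrt{-1}\Lambda_\omega\bar\partial\partial_H\Phi + 2[\phi,[\Phi,\phi^{*H}]]$ together with the Hermiticity of $\Phi$ is exactly how one carries it out.
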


Let $(E,H_{0})$ be a Hermitian vector bundle, and $\mathcal{B}_{H_{0}}$ be the space of $V$-twisted Higgs pairs. Let $\mathcal{G}^{\mathbb{C}}$ (resp. $\mathcal{G}$) be the complex gauge group (resp. unitary gauge group) of $(E,H_{0})$. The complex gauge group $\mathcal{G}^{\mathbb{C}}$ acts on $\mathcal{B}_{H_{0}}$ as follows
\begin{equation}
	\begin{split}
		\sigma\cdot (\bar{\partial}_{A},\phi)=(\sigma\circ \bar{\partial}_{A}\circ\sigma^{-1},\sigma\circ \phi\circ\sigma^{-1}),\ \ \ \ \forall\sigma\in\mathcal{G}^{\mathbb{C}},\ (A,\phi)\in\mathcal{B}_{H_{0}}.
	\end{split}
\end{equation}
Following the methods in \cite{Don1,LZ}, we have the following proposition.
\begin{prop}
	There is a family of complex gauge transformations $\sigma(t)\in\mathcal{G}^{\mathbb{C}}$  such that $(A(t),\phi(t))=\sigma(t)\cdot(A_{0},\phi_{0})$ is a long time solution of the Yang--Mills--Higgs flow (\ref{mymh}) with the initial data $(A_{0},\phi_{0})$, where $\sigma^{*H_{0}}(t)\sigma(t)=H_{0}^{-1}H(t)$ and $H(t)$ is the long time solution of Donaldson heat flow (\ref{DF}) for $(E,\bar{\partial}_{A_{0}},\phi_{0})$.
\end{prop}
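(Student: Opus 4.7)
The plan is to build $\sigma(t)$ directly from the Donaldson heat flow solution $H(t)$ via an auxiliary ODE, and then verify both $\sigma^{*H_0}\sigma = H_0^{-1}H(t)$ and the Yang--Mills--Higgs flow equations for $(A(t),\phi(t)) := \sigma(t)\cdot(A_0,\phi_0)$. The strategy parallels Donaldson's original argument for holomorphic bundles, with the Higgs term carried along in parallel with the curvature term.

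First, I would invoke Simpson's argument (which adapts to the $V$-twisted setting in a straightforward manner, since $V$ is a fixed Hermitian bundle and $\Lambda_{\omega}F_{h_V}$ is bounded) to obtain a smooth long-time solution $H(t)$ of the Donaldson heat flow (\ref{DF}). Setting $\Phi(H(t)) = \sqrt{-1}\Lambda_{\omega}F_{\bar{\partial}_{A_0},H(t)} + [\phi_0,\phi_0^{*H(t)}] - \lambda\cdot\text{Id}_{E}$, define $\sigma(t)\in\mathcal{G}^{\mathbb{C}}$ as the unique solution of the linear ODE
\[
\frac{d\sigma(t)}{dt} = -\sigma(t)\circ\Phi(H(t)), \qquad \sigma(0)=\text{Id}_{E}.
\]
Global existence and smoothness of $\sigma$ follow from standard linear ODE theory on smooth sections of $\text{End}(E)$.

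Next, I would verify the identification $h(t) := \sigma(t)^{*H_0}\sigma(t) = H_0^{-1}H(t)$. The $H(t)$-self-adjointness of $\Phi(H(t))$, combined with $H(t) = H_0\,h(t)$, yields $\Phi(H(t))^{*H_0} = h(t)\,\Phi(H(t))\,h(t)^{-1}$. Differentiating $h(t)=\sigma^{*H_0}\sigma$ using the defining ODE and substituting this adjoint identity produces $h^{-1}\dot h = -2\,\Phi(H(t))$, which is precisely the ODE satisfied by $H_0^{-1}H(t)$ along (\ref{DF}); since both start at $\text{Id}_E$, uniqueness of ODE solutions gives equality.

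Finally, set $(A(t),\phi(t)) := \sigma(t)\cdot(A_0,\phi_0)$. The isometry $\sigma(t):(E,H(t))\to(E,H_0)$ established in the previous step implies the gauge-covariance relation
\[
\sqrt{-1}\Lambda_{\omega}F_{\bar{\partial}_{A(t)},H_0} + [\phi(t),\phi(t)^{*H_0}] - \lambda\cdot\text{Id}_{E} = \sigma(t)\,\Phi(H(t))\,\sigma(t)^{-1}.
\]
Combined with the standard variation formulas $\partial_t\bar{\partial}_{A(t)} = -\bar{\partial}_{A(t)}(\dot\sigma\sigma^{-1})$ and $\dot\phi(t) = [\dot\sigma\sigma^{-1},\phi(t)]$, together with the K\"ahler identity $D_A^{*}F_A = \sqrt{-1}(\partial_A-\bar{\partial}_A)\Lambda_{\omega}F_A$, the defining ODE for $\sigma$ translates directly into the YMH flow (\ref{mymh}); the $\lambda\cdot\text{Id}_E$ drops out of every commutator and of $\bar{\partial}_{A(t)}$. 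The main technical point throughout is the careful bookkeeping of $H_0$- versus $H(t)$-adjoints --- once the identity $\Phi^{*H_0} = h\Phi h^{-1}$ and the conjugation rules $\phi(t)^{*H_0} = \sigma\phi_0^{*H(t)}\sigma^{-1}$ and $F_{\bar{\partial}_{A(t)},H_0} = \sigma F_{\bar{\partial}_{A_0},H(t)}\sigma^{-1}$ are in hand, the verifications in the second and third steps are essentially mechanical.
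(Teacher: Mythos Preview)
Your proposal is correct and follows the same standard route the paper invokes by citing Donaldson and Li--Zhang. One small presentational point: in your second step you write $H(t)=H_0\,h(t)$ with $h:=\sigma^{*H_0}\sigma$ before that identity is established; the clean fix is to set $k(t):=H_0^{-1}H(t)$, use $H(t)$-self-adjointness of $\Phi$ to get $\Phi^{*H_0}=k\Phi k^{-1}$, and then note that both $h$ and $k$ satisfy the same linear ODE $\dot g=-k\Phi k^{-1}g-g\Phi$ with $g(0)=\mathrm{Id}_E$.
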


Along the Yang--Mills--Higgs flow, we have the following energy identity.
\begin{prop}\label{p:1}
	Let $(A(t), \phi(t))$ be a solution of the Yang--Mills--Higgs flow (\ref{mymh}) with initial twisted Higgs pair $(A_{0}, \phi_{0})$. Then
	\begin{equation}\label{Prop:1eq}
		\YMH (A(t),\phi(t))+2\int_{0}^{t}\int_{X}\bigg(\bigg|\frac{\partial A}{\partial t}\bigg|^{2}+\bigg|\frac{\partial \phi}{\partial t}\bigg|^{2}+\bigg|\frac{\partial \phi^{*}}{\partial t}\bigg|^{2}\bigg)dv_{g}dt=\YMH(A_{0},\phi_{0}).
	\end{equation}
\end{prop}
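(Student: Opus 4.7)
The plan is a direct gradient-flow computation: differentiate $\YMH(A(t),\phi(t))$ in $t$, integrate by parts on $X$, and identify the result with $-2\int_X(|\dot A|^2+|\dot\phi|^2+|\dot\phi^*|^2)\,dv_g$; integrating in $t$ from $0$ to $t$ then yields (\ref{Prop:1eq}). Here I write $\dot A=\partial_t A$ and $\dot\phi=\partial_t\phi$.

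Concretely, I would differentiate each of the four summands of $\YMH$ separately. For $|F_A|^2$, use $\partial_t F_A=d_A\dot A$ together with integration by parts to obtain $2\langle d_A^*F_A,\dot A\rangle$. For $|\partial_{A,V}\phi|^2$, use $\partial_t(\partial_{A,V}\phi)=\partial_{A,V}\dot\phi+[\dot A^{1,0},\phi]$ and integrate the $\partial_{A,V}\dot\phi$ term by parts, invoking the K\"ahler identity $[\sqrt{-1}\Lambda_\omega,\partial_{A,V}]=(\bar\partial_{A,V})^*$; the $F_{h_V}$ contribution that this procedure produces is exactly cancelled by the variation of the counterterm $-2\langle\phi,\phi\rangle_V$, which is precisely why that term appears in the definition of $\YMH$. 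For $|[\phi,\phi^{*H}]|^2$, apply the product rule using $\partial_t\phi^{*H}=(\dot\phi)^{*H}$, which holds since $H$ is fixed along the unitary flow.

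After regrouping, the time derivative of $\YMH$ should assemble into
\begin{align*}
&2\int_X\langle D_A^*F_A+(\partial_A-\bar\partial_A)[\phi,\phi^{*H}],\dot A\rangle\,dv_g\\
&\quad+2\int_X\langle[\sqrt{-1}\Lambda_\omega F_A+[\phi,\phi^{*H}],\phi],\dot\phi\rangle\,dv_g+\text{c.c.},
\end{align*}
and substituting the flow equations (\ref{mymh}) collapses this into $-2\int_X(|\dot A|^2+|\dot\phi|^2+|\dot\phi^*|^2)\,dv_g$. The explicit appearance of $|\dot\phi^*|^2$ alongside $|\dot\phi|^2$ reflects the symmetric contribution from pairing against $\dot\phi$ and its $H$-adjoint, consistent with the factor $2$ in front of $|\partial_{A,V}\phi|^2$ in the definition of the functional.

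The main obstacle is the bookkeeping of the $V$-twist: one must verify carefully that the $F_{h_V}$ curvature term produced by the K\"ahler identity for $\partial_{A,V}$ is absorbed exactly by $\partial_t\langle\phi,\phi\rangle_V$, and that the signs and $(1,0)$/$(0,1)$ splittings from $\dot A=\dot A^{1,0}+\dot A^{0,1}$ match so as to reproduce the real operator $(\partial_A-\bar\partial_A)[\phi,\phi^{*H}]$ appearing in (\ref{YMHe}). Once this bookkeeping is in hand, the remaining steps parallel the untwisted Higgs bundle case in \cite{LZ,LZ1,W}.
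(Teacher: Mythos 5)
Your proposal is correct and follows essentially the same route as the paper: compute the first variation of $\YMH$ term by term, use the Bochner--Kodaira/K\"ahler identities (together with $\bar\partial_{A,V}\phi=0$) to turn $\partial_{A,V}^*\partial_{A,V}\phi$ into $[\sqrt{-1}\Lambda_\omega F_A,\phi]$ plus an $F_{h_V}$-term that cancels against the variation of $-2\langle\phi,\phi\rangle_V$, then substitute the flow equations and integrate in $t$. The paper records this via the three displayed identities for $\langle[\dot A^{1,0},\phi],\partial_{A,V}\phi\rangle$, $\langle\dot\phi,\partial_{A,V}^*\partial_{A,V}\phi\rangle-\langle\dot\phi,\phi\rangle_V$, and $\langle[\dot\phi,\phi^*],[\phi,\phi^*]\rangle$, which is exactly the bookkeeping you outline.
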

\begin{proof}
	Let $(A_{t},\phi_{t})$ be a family of twisted Higgs pairs and $\frac{d}{dt}\Big|_{t=0}(A_{t},\phi_{t})=(\dot{A},\dot{\phi})$, then
	\begin{equation*}
		\begin{split}
			\frac{d}{dt}\Big|_{t=0}\YMH(A_{t},\phi_{t})=&2\Re\int_{X}(\langle D_{A}\dot{A},F_{A}\rangle+2\langle[\dot{A}^{1,0},\phi]+\partial_{A,V}\dot{\phi},\partial_{A,V}\phi\rangle\\
			&+\langle[\dot{\phi},\phi^{*}]+[\phi,\dot{\phi}^{*}],[\phi,\phi^{*}]\rangle-2\langle \dot{\phi},\phi\rangle_{V}
			)dv_{g},
		\end{split}
	\end{equation*}
	where
	\begin{equation*}
		\langle[\dot{A}^{1,0},\phi],\partial_{A,V}\phi\rangle=\langle \dot{A}^{1,0},[\partial_{A,V}\phi,\phi^{*}]\rangle=-\overline{\langle \dot{A}^{0,1},[\phi,\bar{\partial}_{A,V^{*}}\phi^{*}]\rangle},
	\end{equation*}
	\begin{equation*}
		\begin{split}
			\langle \dot{\phi},\partial_{A,V}^{*}\partial_{A,V}\phi\rangle-\langle\dot{\phi},\phi\rangle_{V}
			=\langle \dot{\phi},[\sqrt{-1}\Lambda_{\omega}F_{A},\phi]\rangle=-\overline{\langle \dot{\phi}^{*},[\sqrt{-1}\Lambda_{\omega}F_{A},\phi^{*}]\rangle},
		\end{split}
	\end{equation*}
and
	\begin{equation*}
		\begin{split}
			\langle[\dot{\phi},\phi^{*}],[\phi,\phi^{*}]\rangle=\langle\dot{\phi},[[\phi,\phi^{*}],\phi]\rangle,\ \ \langle[\phi,\dot{\phi}^{*}],[\phi,\phi^{*}]\rangle=-\langle\dot{\phi}^{*},[[\phi,\phi^{*}],\phi^{*}]\rangle.
		\end{split}
	\end{equation*}
	Since $\bar{\partial}_{A,V}\phi=0$ and $\partial_{A,V^{*}}\phi^{*}=0$, we have
	\begin{equation*}
		\begin{split}
			\frac{d}{dt}\Big|_{t=0}\YMH(A_{t},\phi_{t})=&2\int_{X}(\langle \dot{A},D_{A}^{*}F_{A}\rangle+\langle\dot{A},(\partial_{A}-\bar{\partial}_{A})[\phi,\phi^{*}]\rangle\\
			&+\langle\dot{\phi},[\sqrt{-1}\Lambda_{\omega}F_{A}+[\phi,\phi^{*}],\phi]\rangle-\langle\dot{\phi}^{*},[\sqrt{-1}\Lambda_{\omega}F_{A}+[\phi,\phi^{*}],\phi^{*}]\rangle
			)dv_{g}.
		\end{split}
	\end{equation*}
	Using the Yang--Mills--Higgs flow equation (\ref{mymh}), we have
	\begin{equation}\label{Prop:2eq}
		\begin{split}
			\frac{d}{dt}\YMH(A(t),\phi(t))=-2\int_{X}\bigg(\bigg|\frac{\partial A}{\partial t}\bigg|^{2}+\bigg|\frac{\partial \phi}{\partial t}\bigg|^{2}+\bigg|\frac{\partial \phi^{*}}{\partial t}\bigg|^{2}\bigg)dv_{g}.
		\end{split}
	\end{equation}
	Integrating   the above equality (\ref{Prop:2eq}) from $0$ to $t$ gives (\ref{Prop:1eq}).
\end{proof}

\begin{prop}\label{ppp:1} Let $(A(t),\phi(t))$ be a solution of the heat flow (\ref{mymh}) with initial twisted Higgs pair $(A_{0}, \phi_{0})$. Then
	\begin{equation}
		\begin{split}
			\bigg(\frac{\partial }{\partial t}-\Delta\bigg)|\phi|^{2}_{H_{0},h_{V}}\leq&-2|\partial_{A,V}\phi|^{2}-C_{3}(|\phi|^{2}+1)^{2}+C_{4}(|\phi|^{2}+1),
		\end{split}
	\end{equation}
	where $C_{3}$, $C_{4}$  are constants depending  on $\sup_{X}|\sqrt{-1}\Lambda_{\omega}F_{h_{V}}|$. Moreover, we have
	\begin{equation}
		\sup_{X}|\phi|^{2}\leq \max\{\sup_{X}|\phi_{0}|^{2},C_{4}/C_{3}\}.
	\end{equation}
\end{prop}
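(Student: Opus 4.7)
The plan is to derive the differential inequality by a direct Bochner--Kodaira computation combined with the evolution equation for $\phi$, and then apply the parabolic maximum principle to obtain the $L^{\infty}$ bound.

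First, using the second equation of the YMH flow (\ref{mymh}), together with the Hermiticity of $\sqrt{-1}\Lambda_{\omega}F_{A}$ and $[\phi,\phi^{*H}]$ and the cyclic trace identity, I would compute
$$\partial_{t}|\phi|^{2}_{H_{0},h_{V}} = -2|[\phi,\phi^{*H}]|^{2} - 2\tr\bigl(\sqrt{-1}\Lambda_{\omega}F_{A}\cdot[\phi,\phi^{*H}]\bigr).$$
Indeed, expanding $\langle[\sqrt{-1}\Lambda_{\omega}F_{A}+[\phi,\phi^{*H}],\phi],\phi\rangle$ and using that $[\phi,\phi^{*H}]$ is self-adjoint collapses the commutator $\langle[[\phi,\phi^{*H}],\phi],\phi\rangle$ to $|[\phi,\phi^{*H}]|^{2}$, while the remaining cross term is the trace above.

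Next, since the holomorphicity $\bar{\partial}_{A(t),V}\phi(t)=0$ is preserved along the flow, $\phi(t)$ is a holomorphic section of the Hermitian holomorphic bundle $V\otimes\mathrm{End}(E)$ endowed with its Chern connection, and the Bochner--Kodaira formula gives
$$\Delta|\phi|^{2} = 2|\partial_{A,V}\phi|^{2} + 2\,\Re\langle\sqrt{-1}\Lambda_{\omega}F_{V\otimes\mathrm{End}(E)}\phi,\phi\rangle.$$
Splitting $F_{V\otimes\mathrm{End}(E)} = F_{h_{V}}\otimes\mathrm{Id} + \mathrm{Id}\otimes\mathrm{ad}(F_{A})$, the $\mathrm{End}(E)$-part of the curvature term is exactly $2\tr(\sqrt{-1}\Lambda_{\omega}F_{A}\cdot[\phi,\phi^{*H}])$, which cancels the $F_{A}$-term from the $\partial_{t}$ computation. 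Subtracting therefore produces
$$(\partial_{t}-\Delta)|\phi|^{2} = -2|\partial_{A,V}\phi|^{2} - 2|[\phi,\phi^{*H}]|^{2} - 2\,\Re\langle\sqrt{-1}\Lambda_{\omega}F_{h_{V}}\phi,\phi\rangle.$$
The final term is pointwise bounded in modulus by $\sup_{X}|\sqrt{-1}\Lambda_{\omega}F_{h_{V}}|\cdot|\phi|^{2}$, and dropping the non-positive contribution $-2|[\phi,\phi^{*H}]|^{2}$ together with elementary quadratic estimates (using $|\phi|^{2}\leq|\phi|^{2}+1$) puts the right-hand side into the stated form $-C_{3}(|\phi|^{2}+1)^{2}+C_{4}(|\phi|^{2}+1)$, with $C_{3},C_{4}$ depending only on $\sup_{X}|\sqrt{-1}\Lambda_{\omega}F_{h_{V}}|$.

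Finally, for the $L^{\infty}$ bound I apply the parabolic maximum principle on $X\times[0,T]$. If the supremum of $|\phi|^{2}$ is attained at $t=0$ it is bounded by $\sup_{X}|\phi_{0}|^{2}$; otherwise at an interior maximum $(x_{0},t_{0})$ one has $\partial_{t}|\phi|^{2}\geq 0$ and $\Delta|\phi|^{2}\leq 0$, so the inequality together with $|\partial_{A,V}\phi|^{2}\geq 0$ forces $C_{3}(|\phi|^{2}(x_{0},t_{0})+1)^{2}\leq C_{4}(|\phi|^{2}(x_{0},t_{0})+1)$, hence $|\phi|^{2}\leq C_{4}/C_{3}$ at that point. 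The main technical obstacle lies in carefully tracking the tensor factors and signs in the Bochner--Kodaira formula on the twisted bundle $V\otimes\mathrm{End}(E)$ so as to see the cancellation of the $F_{A}$-terms cleanly; beyond this bookkeeping, the argument is parallel to the untwisted case of \cite{LZ}.
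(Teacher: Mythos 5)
Your computation of the evolution equation
$$\Big(\frac{\partial}{\partial t}-\Delta\Big)|\phi|^{2}
=-2|\partial_{A,V}\phi|^{2}-2|[\phi,\phi^{*}]|^{2}
+2\langle\phi,\sqrt{-1}\Lambda_{\omega}F_{h_{V}}\phi\rangle$$
via the Bochner--Kodaira identity for the holomorphic section $\phi$ of $V\otimes\mathrm{End}(E)$ and cancellation of the $F_{A}$-terms is fine and coincides with the paper's intermediate formula. The gap is in the very next step: you propose to \emph{drop} the non-positive term $-2|[\phi,\phi^{*}]|^{2}$ and then claim that ``elementary quadratic estimates'' yield the bound $-C_{3}(|\phi|^{2}+1)^{2}+C_{4}(|\phi|^{2}+1)$ with $C_{3}>0$. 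That cannot work. After discarding $-2|[\phi,\phi^{*}]|^{2}$ you are left with $-2|\partial_{A,V}\phi|^{2}+C|\phi|^{2}$, whose large-$|\phi|$ behaviour is $+C|\phi|^{2}$; there is simply no source for a term that grows like $-|\phi|^{4}$. And without the negative quartic term the maximum-principle argument collapses: at an interior maximum you would only conclude $0\leq C_{4}(|\phi|^{2}+1)$, which is vacuous, so you would not obtain $\sup_{X}|\phi|^{2}\leq\max\{\sup_{X}|\phi_{0}|^{2},C_{4}/C_{3}\}$.

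The term $-2|[\phi,\phi^{*}]|^{2}$ is precisely the one that must be retained and estimated, not discarded. In the paper's proof one writes $\phi=\phi_{i}v^{i}$ in a local orthonormal frame $\{v^{i}\}$ of $V$, notes that the integrability condition $\phi\wedge\phi=0$ forces $[\phi_{i},\phi_{j}]=0$, and then uses the identity $|[\phi,\phi^{*}]|^{2}=\sum_{i,j}|[\phi_{i},\phi_{j}^{*}]|^{2}\geq\sum_{i}|[\phi_{i},\phi_{i}^{*}]|^{2}$ together with Simpson's Lemma 2.7 from \cite{S2} to extract the lower bound $C_{1}(|\phi_{i}|^{2}+1)^{2}-C_{2}(|\phi_{i}|^{2}+1)$ on each summand. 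That is where the $-C_{3}(|\phi|^{2}+1)^{2}$ comes from. Your proposal is missing this entire step (the frame decomposition, the role of $\phi\wedge\phi=0$, and the invocation of Simpson's lemma), and as written it does not establish the stated differential inequality.
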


\begin{proof}
By direct calculation, there holds
	\begin{equation*}
		\begin{split}
			\bigg(\frac{\partial}{\partial t}-\Delta\bigg)|\phi|^{2}_{H_{0},h_{V}}=&-2|\partial_{A,V}\phi|^{2}-2\langle\phi,[[\phi,\phi^{*}],\phi]\rangle+2\langle\phi,\sqrt{-1}\Lambda_{\omega}F_{h_{V}}(\phi)\rangle\\
			=&-2|\partial_{A,V}\phi|^{2}-2|[\phi,\phi^{*}]|^{2}+2\langle\phi,\sqrt{-1}\Lambda_{\omega}F_{h_{V}}(\phi)\rangle.
		\end{split}
	\end{equation*}
Let $\{v^{i}\}$ be a local orthonormal frame of $V$ and $\phi=\phi_{i}v^{i}$. Since $\phi\wedge\phi=0$, we have $\phi_{i}\phi_{j}=\phi_{j}\phi_{i}$ and
	\begin{equation*}
		\begin{split}
			|[\phi,\phi^{*}]|^{2}=&\sum_{i,j}\tr([\phi_{i},\phi_{i}^{*}][\phi_{j},\phi_{j}^{*}])=\sum_{i,j}\tr([\phi_{i},\phi_{j}^{*}][\phi_{j},\phi_{i}^{*}])\\
			=&\sum_{i,j}|[\phi_{i},\phi_{j}^{*}]|^{2}\geq \sum_{i}|[\phi_{i},\phi_{i}^{*}]|^{2}.
		\end{split}
	\end{equation*}
	According to the Lemma 2.7 in \cite{S2}, we have
	\begin{equation*}
		|[\phi_{i},\phi_{i}]|^{2}\geq C_{1}(|\phi_{i}|^{2}+1)^{2}-C_{2}(|\phi_{i}|^{2}+1).
	\end{equation*}
The above calculation leads to the proof.
\end{proof}

In the following, we derive the local energy monotonic inequality along the flow. Let $e_{2}(A,\phi)=|F_{A}|^{2}+2|\partial_{A,V}\phi|^{2}$ and $f\in C^{\infty}(X)$, then
\begin{equation*}
	\begin{split}
		\frac{d}{dt}\int_{X}f^{2}e_{2}(A,\phi)dv_{g}=&2\Re\int_{X}\bigg(\bigg\langle \frac{\partial A}{\partial t},D_{A}^{*}(f^{2}F_{A})\bigg\rangle+f^{2}\bigg\langle\frac{\partial A}{\partial t},(\partial_{A}-\bar{\partial}_{A})[\phi,\phi^{*}]\bigg\rangle\\
		&+2\bigg\langle\frac{\partial\phi}{\partial t},\partial_{A,V}^{*}(f^{2}\partial_{A,V}\phi) \bigg\rangle\bigg)dv_{g},
	\end{split}
\end{equation*}
where
\begin{equation*}
	\begin{split}
		D^{*}_{A}(f^{2}F_{A})=&\sqrt{-1}[\Lambda_{\omega},\bar{\partial}_{A}-\partial_{A}](f^{2}F_{A})\\
		=&\sqrt{-1}\Lambda_{\omega}(\bar{\partial}-\partial)(f^{2})\wedge F_{A}-f^{2}D_{A}^{*}F_{A}-(\bar{\partial}-\partial)(f^{2})\sqrt{-1}\Lambda_{\omega}F_{A},
	\end{split}
\end{equation*}
and
\begin{equation*}
	\begin{split}
		\partial_{A,V}^{*}(f^{2}\partial_{A,V}\phi)=\sqrt{-1}\Lambda_{\omega}\bar{\partial}(f^{2})\wedge\partial_{A,V}\phi+f^{2}\partial_{A,V}^{*}\partial_{A,V}\phi.
	\end{split}
\end{equation*}
Therefore,
\begin{equation*}
	\begin{split}
		&\frac{d}{dt}\int_{X}f^{2}e_{2}(A,\phi)dv_{g}\\
&=2\int_{X}f^{2}\bigg(\bigg\langle \frac{\partial A}{\partial t},D_{A}^{*}F_{A}\bigg\rangle+\bigg\langle\frac{\partial A}{\partial t},(\partial_{A}-\bar{\partial}_{A})[\phi,\phi^{*}]\bigg\rangle\\
		&+\bigg\langle \frac{\partial \phi}{\partial t},[\sqrt{-1}\Lambda_{\omega}F_{A},\phi]\bigg\rangle-\bigg\langle\frac{\partial\phi^{*}}{\partial t},[\sqrt{-1}\Lambda_{\omega}F_{A},\phi^{*}]\bigg\rangle\bigg)dv_{g}\\
		&+2\Re\int_{X}\bigg(\bigg\langle\frac{\partial A}{\partial t},\sqrt{-1}\Lambda_{\omega}(\bar{\partial}-\partial)(f^{2})\wedge F_{A}\bigg\rangle-\bigg\langle\frac{\partial A}{\partial t},\sqrt{-1}(\bar{\partial}-\partial)(f^{2})\Lambda_{\omega} F_{A}\bigg\rangle\\
		&+2\bigg\langle\frac{\partial\phi}{\partial  t},\sqrt{-1}\Lambda_{\omega}\bar{\partial}(f^{2})\wedge\partial_{A,V}\phi \bigg\rangle+2f^{2}\bigg\langle \frac{\partial \phi}{\partial t},\phi\bigg\rangle\bigg)dv_{g}.
	\end{split}
\end{equation*}
Let $f$ be a cut-off function on $B_{2R}(x_{0})$, satisfy $0\leq f\leq 1$, $f\equiv 1$ on $B_{R}(x_{0})$ and $|df|\leq\frac{2}{R}$. Then
\begin{equation*}
	\begin{split}
		&\bigg|\frac{d}{dt}\int_{X}f^{2}e_{2}(A,\phi)dv_{g}+2\int_{X}f^{2}\bigg(\bigg| \frac{\partial A}{\partial t}\bigg|^{2}+\bigg| \frac{\partial \phi}{\partial t}\bigg|^{2}+\bigg|\frac{\partial \phi^{*}}{\partial t}\bigg|^{2}\bigg)dv_{g}\bigg|\\
		\leq&\frac{C_{1}}{R}\Big(\int_{X}\bigg(\bigg|\frac{\partial A}{\partial t}\bigg|^{2}+\bigg|\frac{\partial \phi}{\partial t}\bigg|^{2}+\bigg|\frac{\partial \phi^{*}}{\partial t}\bigg|^{2}\bigg)dv_{g}\Big)^{1/2}+C_{2}\Big(\int_{X}\bigg(\bigg|\frac{\partial A}{\partial t}\bigg|^{2}+\bigg|\frac{\partial \phi}{\partial t}\bigg|^{2}+\bigg|\frac{\partial \phi^{*}}{\partial t}\bigg|^{2}\bigg)dv_{g}\Big)^{1/2},
	\end{split}
\end{equation*}
where $C_{1}$, $C_{2}$ are constants depending on $\sup_{X}|\sqrt{-1}\Lambda_{\omega}F_{h_{V}}|$, $\sup_{X}|\phi_{0}|$, $\YMH(A_{0},\phi_{0})$ and the geometry of $(X,\omega)$. Then we have the following proposition.

\begin{prop}\label{p:3}
	Let $(A(t),\phi(t))$ be a solution of the  Yang--Mills--Higgs flow (\ref{mymh}), then for any $B_{2R}(x_{0})\subset X$, $s,\tau$, we have
	\begin{equation*}
		\begin{split}
			\int_{B_{R}(x_{0})}&e_{2}(A,\phi)(\cdot, s)dv_{g}\\
			\leq &\int_{B_{2R}(x_{0})}e_{2}(A,\phi)(\cdot,\tau)dv_{g}+2\int_{\min\{s,\tau\}}^{\max\{s,\tau\}}\int_{X}\bigg(\bigg| \frac{\partial A}{\partial t}\bigg|^{2}+\bigg| \frac{\partial \phi}{\partial t}\bigg|^{2}+\bigg|\frac{\partial \phi^{*}}{\partial t}\bigg|^{2}\bigg)dv_{g}dt\\
			&+C_{1}\bigg\{\frac{|s-\tau|}{R^{2}}\int_{\min\{s,\tau\}}^{\max\{s,\tau\}}\int_{X}\bigg(\bigg| \frac{\partial A}{\partial t}\bigg|^{2}+\bigg| \frac{\partial \phi}{\partial t}\bigg|^{2}+\bigg|\frac{\partial\phi^{*}}{\partial t}\bigg|^{2}\bigg)dv_{g}dt\bigg\}^{1/2}\\
			&+C_{2}\bigg\{|s-\tau|\int_{\min\{s,\tau\}}^{\max\{s,\tau\}}\int_{X}\bigg(\bigg| \frac{\partial A}{\partial t}\bigg|^{2}+\bigg| \frac{\partial \phi}{\partial t}\bigg|^{2}+\bigg|\frac{\partial \phi^{*}}{\partial t}\bigg|^{2}\bigg)dv_{g}dt\bigg\}^{1/2},
		\end{split}
	\end{equation*}
	where $C_{1}, C_{2}$ are constants depending on $\sup_{X}|\sqrt{-1}\Lambda_{\omega}F_{h_{V}}|$, $\sup_{X}|\phi_{0}|$, $\YMH(A_{0},\phi_{0})$ and the geometry of $(X,\omega)$.
\end{prop}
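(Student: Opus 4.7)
The main work has already been carried out in the computation displayed immediately before the statement, so it remains to integrate that differential estimate in time and specialize the cut-off function. Fix $f\in C^{\infty}(X)$ supported in $B_{2R}(x_{0})$ with $0\le f\le 1$, $f\equiv 1$ on $B_{R}(x_{0})$ and $|df|\le 2/R$, exactly as in the derivation above. Abbreviate
\begin{equation*}
\mathcal{E}(t):=\int_{X}\Big(\big|\tfrac{\partial A}{\partial t}\big|^{2}+\big|\tfrac{\partial \phi}{\partial t}\big|^{2}+\big|\tfrac{\partial \phi^{*}}{\partial t}\big|^{2}\Big)dv_{g},
\end{equation*}
so that the pre-derived estimate reads
\begin{equation*}
\Big|\frac{d}{dt}\int_{X} f^{2} e_{2}(A,\phi)\,dv_{g}+2\int_{X} f^{2}\Big(\big|\tfrac{\partial A}{\partial t}\big|^{2}+\big|\tfrac{\partial \phi}{\partial t}\big|^{2}+\big|\tfrac{\partial \phi^{*}}{\partial t}\big|^{2}\Big)dv_{g}\Big|\le \Big(\tfrac{C_{1}}{R}+C_{2}\Big)\mathcal{E}(t)^{1/2}.
\end{equation*}

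The next step is a case split on the ordering of $s$ and $\tau$. If $\tau\le s$, I would integrate the upper side of the above two-sided bound from $\tau$ to $s$; the term $-2\int_{X} f^{2}(\cdots)$ produced on the right is nonpositive and can be dropped. If $s\le \tau$, I would instead integrate the lower side from $s$ to $\tau$ and transfer the resulting $-2\int_{X} f^{2}(\cdots)$ to the left-hand side, using $f^{2}\le 1$ to enlarge it to $2\int_{s}^{\tau} \mathcal{E}(t)\,dt$. Packaging both cases under one inequality yields
\begin{equation*}
\int_{X} f^{2} e_{2}(A,\phi)(\cdot,s)\,dv_{g}\le \int_{X} f^{2} e_{2}(A,\phi)(\cdot,\tau)\,dv_{g}+2\int_{\min\{s,\tau\}}^{\max\{s,\tau\}}\mathcal{E}(t)\,dt+\int_{\min\{s,\tau\}}^{\max\{s,\tau\}}\Big(\tfrac{C_{1}}{R}+C_{2}\Big)\mathcal{E}(t)^{1/2}\,dt.
\end{equation*}

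The two time-integrals of $\mathcal{E}(t)^{1/2}$ are then converted to the $L^{2}$-type expressions in the statement by the Cauchy--Schwarz inequality in $t$,
\begin{equation*}
\int_{\min\{s,\tau\}}^{\max\{s,\tau\}}\mathcal{E}(t)^{1/2}\,dt\le |s-\tau|^{1/2}\bigg(\int_{\min\{s,\tau\}}^{\max\{s,\tau\}}\mathcal{E}(t)\,dt\bigg)^{1/2},
\end{equation*}
which produces the factor $\{|s-\tau|R^{-2}\!\int\!\!\int\mathcal{E}\}^{1/2}$ from the $C_{1}/R$ piece and $\{|s-\tau|\!\int\!\!\int\mathcal{E}\}^{1/2}$ from the $C_{2}$ piece. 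Finally, using $f\equiv 1$ on $B_{R}(x_{0})$ to bound the left-hand side from below by $\int_{B_{R}}e_{2}\,dv_{g}$, and $\mathrm{supp}\,f\subset B_{2R}(x_{0})$ together with $f\le 1$ to bound the first term on the right by $\int_{B_{2R}}e_{2}\,dv_{g}$, gives the claimed local monotonicity inequality.

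The main obstacle has really been handled already in the calculation preceding the proposition, namely the careful identification of the cancellations coming from the flow equations (\ref{mymh}) and from the holomorphicity constraints $\bar\partial_{A,V}\phi=0$ and $\partial_{A,V^{*}}\phi^{*}=0$; the uniform control of the constants $C_{1},C_{2}$ in turn requires the $C^{0}$-bound on $\phi$ furnished by Proposition~\ref{ppp:1} and the total energy control of Proposition~\ref{p:1}. Once these inputs are in place, passing from the differential form of the estimate to the integrated one is a routine integration plus Cauchy--Schwarz argument.
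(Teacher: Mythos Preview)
Your proposal is correct and is precisely the approach of the paper: the differential estimate displayed just before the statement is integrated in time with the chosen cut-off, the two cases $\tau\le s$ and $s\le\tau$ are handled exactly as you describe, and the Cauchy--Schwarz step in $t$ converts $\int\mathcal{E}^{1/2}$ into the two square-root expressions. The paper simply says ``Then we have the following proposition'' after the differential inequality, leaving these routine steps implicit; you have written them out.
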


\medskip

Let $\theta(t)=\sqrt{-1}\Lambda_{\omega}F_{A(t)}+[\phi(t),\phi^{*}(t)]$ and
 $$I(t)=\int_{X}(|D_{A(t)}\theta(t)|^{2}_{\omega,H_{0}}+2|[\phi(t),\theta(t)]|_{h_{V},H_{0}}^{2})dv_{g}.$$
Then we have the following proposition:
\begin{prop}
	\begin{equation}
		I(t)\rightarrow 0,\ \ as\ t\rightarrow+\infty.
	\end{equation}
\end{prop}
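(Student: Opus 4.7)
My plan has two parts: first, use the Kähler and Bianchi identities to recognize $I(t)$ as the dissipation rate of the Yang--Mills--Higgs energy, which via Proposition \ref{p:1} gives $I\in L^{1}(0,\infty)$; second, bound $|I'(t)|$ uniformly in $t$ so that $I(t)\to 0$ follows by a standard calculus argument.

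For the identification, observe that $\bar\partial_{A}F_{A}=\partial_{A}F_{A}=0$ by Bianchi, and the Kähler identity $D_{A}^{*}=\sqrt{-1}[\Lambda_{\omega},\bar\partial_{A}-\partial_{A}]$ gives $D_{A}^{*}F_{A}=(\partial_{A}-\bar\partial_{A})(\sqrt{-1}\Lambda_{\omega}F_{A})$. The flow (\ref{mymh}) therefore rewrites as
\begin{equation*}
\frac{\partial A}{\partial t}=-(\partial_{A}-\bar\partial_{A})\theta,\qquad
\frac{\partial \phi}{\partial t}=-[\theta,\phi],\qquad
\frac{\partial \phi^{*}}{\partial t}=-[\theta,\phi^{*}].
\end{equation*}
Self-adjointness of $\theta$ gives $|\partial_{A}\theta|=|\bar\partial_{A}\theta|$, hence $\left|\frac{\partial A}{\partial t}\right|^{2}=|D_{A}\theta|^{2}$, and a similar computation yields $\left|\frac{\partial \phi}{\partial t}\right|^{2}+\left|\frac{\partial \phi^{*}}{\partial t}\right|^{2}=2|[\phi,\theta]|^{2}$. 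Consequently Proposition \ref{p:1} reads $I(t)=-\tfrac{1}{2}\frac{d}{dt}\YMH(A(t),\phi(t))$. Since $\YMH$ is bounded below (the only negative contribution $-2\langle\phi,\phi\rangle_{V}$ is controlled by Proposition \ref{ppp:1} and the fixed geometry of $(V,h_{V})$), it decreases monotonically to a finite limit, so $\int_{0}^{\infty}I(t)\,dt<\infty$.

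To promote $L^{1}$-integrability to pointwise decay, I seek a uniform bound on $|I'(t)|$. Two $L^{\infty}$-bounds are available: $|\phi|\leq C$ by Proposition \ref{ppp:1}, and $|\theta(t)|^{2}_{H_{0}}=|\Phi(H(t))|^{2}_{H(t)}\leq\sup_{X}|\Phi(H_{0})|^{2}$ by Lemma \ref{ll:1} combined with the maximum principle, via the gauge equivalence $(A(t),\phi(t))=\sigma(t)\cdot(A_{0},\phi_{0})$ with the Donaldson heat flow $H(t)$. With these uniform bounds in hand, $\theta$ satisfies a parabolic equation of Bochner type derived from (\ref{mymh}), and one can differentiate $I(t)$ in $t$, integrate by parts against $\theta$ and against $\phi$, and estimate the resulting expression by a constant $C$ depending only on the initial data, $\sup_{X}|\sqrt{-1}\Lambda_{\omega}F_{h_{V}}|$, and the geometry of $(X,\omega)$.

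With $I\in L^{1}(0,\infty)$ and $|I'|\leq C$ on $[0,\infty)$, the standard argument (if $I(t_{k})\geq\varepsilon$ along $t_{k}\to\infty$, then $I(s)\geq\varepsilon/2$ on $[t_{k}-\varepsilon/(2C),t_{k}+\varepsilon/(2C)]$, contradicting integrability) forces $I(t)\to 0$. The main obstacle is the uniform bound on $|I'(t)|$: it requires a Bochner-type computation that must absorb lower-order commutator terms involving $F_{h_{V}}$ and $[\phi,\cdot]$, which is precisely where the $L^{\infty}$-bounds on $\phi$ and $\theta$ (together with the Kähler identities on $(X,\omega)$) are essential.
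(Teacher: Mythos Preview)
Your proposal is correct and follows essentially the same route as the paper, which itself omits the argument and refers to the untwisted case in \cite{LZ}. Your identification of $I(t)$ with the energy dissipation (via the K\"ahler identities rewriting $\partial_t A=-(\partial_A-\bar\partial_A)\theta$, $\partial_t\phi=-[\theta,\phi]$) and the resulting $L^{1}$-integrability is exactly the first step there; the second step, controlling the time derivative of $I$ through a Bochner computation using the uniform $L^{\infty}$ bounds on $\theta$ and $\phi$, is also the mechanism used in \cite{LZ}.

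One small remark on the sketch of step two: your final calculus argument only needs the \emph{one-sided} bound $I'(t)\leq C$ (so that $I(s)\geq\varepsilon/2$ on $[t_k-\varepsilon/(2C),t_k]$), not the full $|I'|\leq C$. This is convenient because after integrating by parts one obtains $I'\leq -c\|L\theta\|_{L^{2}}^{2}+C(I+1)$ with $L\theta=D_A^{*}D_A\theta+2[\phi,\cdot]^{*}[\phi,\theta]$; since $I=\langle\theta,L\theta\rangle_{L^{2}}\leq C\|L\theta\|_{L^{2}}$ by the uniform bound on $\|\theta\|_{L^{2}}$, the quadratic term forces $I$ itself to stay bounded, and then $I'\leq C$ follows. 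This closes the loop you flagged as the ``main obstacle.''
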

\begin{proof}
The proof is exactly the same as untwisted case (\cite{LZ}), so we omit here.
\end{proof}

Let $\nabla_{A,V}$ be the induced connection on $\Omega^{*}_{X}\otimes\mbox{End}(E)\otimes V$ induced by $D_{A},D_{h_{V}}$ and the Chern connection on $TM$ and $\nabla_{A}$ be the covariant derivative corresponding to $D_{A}$.

\begin{prop}\label{p:2}
	Along the Yang--Mills--Higgs flow (\ref{mymh}), we have
	\begin{equation}\label{eee:1}
		\begin{split}
			\bigg(\frac{\partial}{\partial t}-\Delta\bigg)&|\partial_{A,V}\phi|^{2}+2|\nabla_{A,V}\partial_{A,V}\phi|^{2}\\
			\leq&C_{1}(|\phi|^{2}+|F_{A}|+|F_{h_V}|+|Ric|)|\partial_{A,V}\phi|^{2}+C_{2}|\partial_{A}\sqrt{-1}\Lambda_{\omega}F_{h_{V}}||\phi||\partial_{A,V}\phi|
		\end{split}
	\end{equation}
and
\begin{equation}\label{eee:2}
	\begin{split}
		\bigg(\frac{\partial}{\partial t}-\Delta\bigg)&|F_{A}|^{2}+|\nabla_{A}F_{A}|^{2}\\
		\leq& C_{3}(|F_{A}|+|\phi|^{2}+|Rm|)|F_{A}|^{2}+C_{4}|F_{A}||\partial_{A,V}\phi|^{2}+C_{5}|\phi|^{2}|F_{h_{V}}||F_{A}|,
	\end{split}
\end{equation}
	where the constants $C_i(i=1,\cdots,5)$ are depending only on the dimension $n$.
\end{prop}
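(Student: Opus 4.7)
The plan is to derive both parabolic Bochner--Weitzen\"ock inequalities by direct computation, combining the Yang--Mills--Higgs flow equations (\ref{mymh}) with the K\"ahler identities, the Bianchi identity $D_{A}F_{A}=0$, the Higgs conditions $\phi\wedge\phi=0$ and $\bar{\partial}_{A,V}\phi=0$, and standard commutator formulas on $(\Omega^{*}_{X}\otimes\mbox{End}(E)\otimes V,\nabla_{A,V})$. All terms on the right-hand sides are zeroth-order in derivatives of $\phi$ and $F_{A}$, so the strategy is to let the highest-order contributions cancel and then absorb the remaining gradient-squared contributions into the $|\nabla_{A,V}\partial_{A,V}\phi|^{2}$ and $|\nabla_{A}F_{A}|^{2}$ terms on the left by a pointwise Kato-type estimate.

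For (\ref{eee:1}), I first differentiate $|\partial_{A,V}\phi|^{2}$ in $t$. The contribution $[\dot{A}^{1,0},\phi]$ from the $A$-flow and the contribution $\partial_{A,V}\dot{\phi}$ from the $\phi$-flow are expanded using (\ref{mymh}). I then compute $\Delta|\partial_{A,V}\phi|^{2}$ via the Weitzenb\"ock formula on $(1,0)$-forms valued in $\mbox{End}(E)\otimes V$; this produces the gradient term $-2|\nabla_{A,V}\partial_{A,V}\phi|^{2}$ together with curvature commutators $[\nabla^{*}\nabla,\partial_{A,V}]$ which, by $\bar{\partial}_{A,V}\phi=0$ and the K\"ahler identity $\partial_{A,V}^{*}\partial_{A,V}\phi=[\sqrt{-1}\Lambda_{\omega}F_{A},\phi]-\phi\cdot\sqrt{-1}\Lambda_{\omega}F_{h_{V}}$, reduce to operators involving $F_{A}$, $F_{h_{V}}$ and the Ricci curvature of $(X,\omega)$ paired with $\partial_{A,V}\phi$. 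The highest-order pieces cancel against the time derivative, leaving exactly the curvature combination $(|\phi|^{2}+|F_{A}|+|F_{h_{V}}|+|\mathrm{Ric}|)|\partial_{A,V}\phi|^{2}$, while differentiating the twisting-curvature piece $\phi\cdot\sqrt{-1}\Lambda_{\omega}F_{h_{V}}$ through $\partial_{A,V}$ produces the mixed term $|\partial_{A}\sqrt{-1}\Lambda_{\omega}F_{h_{V}}||\phi||\partial_{A,V}\phi|$.

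For (\ref{eee:2}), from the $A$-flow I get $\frac{\partial F_{A}}{\partial t}=D_{A}\dot{A}=-D_{A}D_{A}^{*}F_{A}-D_{A}(\partial_{A}-\bar{\partial}_{A})[\phi,\phi^{*H}]$. The Weitzenb\"ock formula for $2$-forms together with Bianchi gives $\Delta F_{A}=D_{A}D_{A}^{*}F_{A}+\mathcal{R}\star F_{A}$, where $\mathcal{R}\star F_{A}$ collects the Riemann-curvature commutator and the self-commutator $[F_{A},F_{A}]$. Subtracting cancels the leading term and yields the gradient $-|\nabla_{A}F_{A}|^{2}$ (after Kato) on the left, plus $|Rm||F_{A}|^{2}$ and $|F_{A}|^{3}$ on the right. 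Expanding $D_{A}(\partial_{A}-\bar{\partial}_{A})[\phi,\phi^{*H}]$, the terms of schematic type $\partial_{A,V}\phi\ast\partial_{A,V}\phi^{*}$ pair with $F_{A}$ to produce $|F_{A}||\partial_{A,V}\phi|^{2}$, and commuting $D_{A}$ past $\phi^{*H}$ using the K\"ahler identity reintroduces the twisting curvature, producing the last term $|\phi|^{2}|F_{h_{V}}||F_{A}|$. Gathering and applying Cauchy--Schwarz where needed delivers (\ref{eee:2}).

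The main obstacle is the bookkeeping of the twist: the curvature $F_{h_{V}}$ enters in three logically distinct places — through the Weitzenb\"ock commutator $[\bar{\partial}_{A,V},\partial_{A,V}]=F_{A}\otimes\mathrm{Id}_{V}+\mathrm{Id}_{E}\otimes F_{h_{V}}$ acting on $\phi$, through the twisted K\"ahler identity relating $\partial_{A,V}^{*}\partial_{A,V}\phi$ to $\phi\cdot\sqrt{-1}\Lambda_{\omega}F_{h_{V}}$, and through the pairing $\langle\phi,\phi\rangle_{V}=\tr(\phi\sqrt{-1}\Lambda_{\omega}F_{h_{V}}\phi^{*H})$ implicit in $\YMH$ — and one must verify that these combine into exactly the displayed linear combinations, with the separation of $|\mathrm{Ric}|$ (from the $(1,0)$-Weitzenb\"ock on $\partial_{A,V}\phi$) versus $|Rm|$ (from the $2$-form Weitzenb\"ock on $F_{A}$). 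No new idea beyond the untwisted Higgs computation of \cite{LZ,LZ1} is required, only careful accounting of the additional $F_{h_{V}}$ terms.
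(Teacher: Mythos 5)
Your proposal follows essentially the same route as the paper: differentiate $|\partial_{A,V}\phi|^{2}$ and $|F_{A}|^{2}$ in $t$ via (\ref{mymh}), expand $\Delta$ of the same quantities using the Bochner/commutation formulas in normal coordinates so that the leading terms ($[\partial_{A}\sqrt{-1}\Lambda_{\omega}F_{A},\phi]$ in the first case, $D_{A}D_{A}^{*}F_{A}$ in the second) cancel, and bound the remaining curvature commutators pointwise, with the separate appearances of $F_{h_{V}}$ traced through exactly as you describe. One small correction to the narrative: no Kato-type absorption is actually required — the $2|\nabla_{A,V}\partial_{A,V}\phi|^{2}$ term is produced directly, with coefficient $2$ intact, as the leading piece of the Bochner identity, and the remaining contributions are already zeroth-order products that can be read off term by term; this does not change the structure or validity of your argument.
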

\begin{proof}
In local normal coordinates, we have
\begin{equation*}
	\begin{split}
		\Delta|\partial_{A,V}\phi|^{2}=2|\nabla_{A,V}\partial_{A,V}\phi|^{2}+2\langle\nabla_{\alpha}\nabla_{\bar{\alpha}}\partial_{A,V}\phi,\partial_{A,V}\phi\rangle+2\langle\partial_{A,V}\phi,\nabla_{\bar{\alpha}}\nabla_{\alpha}\partial_{A,V}\phi\rangle,
	\end{split}
\end{equation*}
where
\begin{equation*}
	\begin{split}
		\nabla_{\alpha}\nabla_{\bar{\alpha}}\partial_{A,V}\phi=&\nabla_{\alpha}\nabla_{\bar{\alpha}}\nabla_{\beta}\phi dz^{\beta}\\
		=&-\nabla_{\alpha}(F_{A,V;\beta\bar{\alpha}}\phi)dz^{\beta}\\
		=&-\nabla_{\alpha}(F_{A,V;\beta\bar{\alpha}})\phi dz^{\beta}-F_{A,V;\beta\bar{\alpha}}\nabla_{\alpha}\phi dz^{\beta}\\
		=&-\nabla_{\beta}(F_{A,V;\alpha\bar{\alpha}})\phi dz^{\beta}-F_{A,V;\beta,\bar{\alpha}}\nabla_{\alpha}\phi dz^{\beta}\\
		=&-\partial_{A}(\sqrt{-1}\Lambda_{\omega}F_{A,V})\phi-F_{A,V;\beta,\bar{\alpha}}\nabla_{\alpha}\phi dz^{\beta}\\
		=&-[\partial_{A}(\sqrt{-1}\Lambda_{\omega}F_{A}),\phi]-\partial_{A}(\sqrt{-1}\Lambda_{\omega}F_{h_V})\phi-F_{A,V;\beta,\bar{\alpha}}\nabla_{\alpha}\phi dz^{\beta},
	\end{split}
\end{equation*}
\begin{equation*}
	\begin{split}
		\nabla_{\bar{\alpha}}\nabla_{\alpha}\partial_{A,V}\phi=&\nabla_{\bar{\alpha}}\nabla_{\alpha}\nabla_{\beta}\phi dz^{\beta}+\nabla_{\beta}\phi\nabla_{\bar{\alpha}}\nabla_{\alpha}dz^{\beta}\\
		=&\nabla_{\bar{\alpha}}\nabla_{\beta}\nabla_{\alpha}\phi dz^{\beta}+\nabla_{\beta}\phi\nabla_{\bar{\alpha}}\nabla_{\alpha}dz^{\beta}\\
		=&\nabla_{\beta}\nabla_{\bar{\alpha}}\nabla_{\alpha}\phi dz^{\beta}-F_{A,V;\beta\bar{\alpha}}\nabla_{\alpha}\phi dz^{\beta}+\nabla_{\beta}\phi\nabla_{\bar{\alpha}}\nabla_{\alpha}dz^{\beta}\\
		=&-\partial_{A}(\sqrt{-1}\Lambda_{\omega}F_{A,V}\phi)-F_{A,V;\beta\bar{\alpha}}\nabla_{\alpha}\phi dz^{\beta}+\nabla_{\beta}\phi\nabla_{\bar{\alpha}}\nabla_{\alpha}dz^{\beta}\\
		=&-\partial_{A}([\sqrt{-1}\Lambda_{\omega}F_{A},\phi]+\sqrt{-1}\Lambda_{\omega}F_{h_V}\phi)-F_{A,V;\beta\bar{\alpha}}\nabla_{\alpha}\phi dz^{\beta}+\nabla_{\beta}\phi\nabla_{\bar{\alpha}}\nabla_{\alpha}dz^{\beta}\\
		=&-[\partial_{A}\sqrt{-1}\Lambda_{\omega}F_{A},\phi]+[\sqrt{-1}\Lambda_{\omega}F_{A},\partial_{A}\phi]+(\partial_{A}\sqrt{-1}\Lambda_{\omega}F_{h_V})\phi\\
		&+\sqrt{-1}\Lambda_{\omega}F_{h_V}\partial_{A}\phi-F_{A,V;\beta\bar{\alpha}}\nabla_{\alpha}\phi dz^{\beta}+\nabla_{\beta}\phi\nabla_{\bar{\alpha}}\nabla_{\alpha}dz^{\beta}.\\
	\end{split}
\end{equation*}	
	On the other hand, using the heat flow equations (\ref{mymh}), we have
\begin{equation*}
	\begin{split}
		\frac{\partial}{\partial t}|\partial_{A,V}\phi|^{2}_{\omega,h_{V},H_{0}}=&2\Re\bigg(\bigg\langle\bigg[\frac{\partial A^{1,0}}{\partial t},\phi\bigg],\partial_{A,V}\phi\bigg\rangle+\bigg\langle\partial_{A,V}\frac{\partial \phi}{\partial t},\partial_{A,V}\phi\bigg\rangle\bigg)\\
		=&-2\Re\big(\langle[\partial_{A}(\sqrt{-1}\Lambda_{\omega}F_{A}+[\phi,\phi^{*}]),\phi],\partial_{A,V}\phi\rangle\\
		&+\langle\partial_{A,V}[\sqrt{-1}\Lambda_{\omega}F_{A}+[\phi,\phi^{*}],\phi],\partial_{A,V}\phi\rangle\big)\\
		=&-2\Re\big(2\langle[\partial_{A}(\sqrt{-1}\Lambda_{\omega}F_{A}+[\phi,\phi^{*}]),\phi],\partial_{A,V}\phi\rangle\\
		&+\langle[\sqrt{-1}\Lambda_{\omega}F_{A}+[\phi,\phi^{*}],\partial_{A,V}\phi],\partial_{A,V}\phi\rangle\big).
	\end{split}
\end{equation*}
Then (\ref{eee:1}) follows from the above identities. The proof of the other equation (\ref{eee:2}) is similar and we omit here.
\end{proof}

\section{Convergence of the Yang--Mills--Higgs flow for twisted Higgs pairs}\label{sec:Con}

In this section, we consider the convergence of the Yang--Mills--Higgs flow (\ref{mymh}) for twisted Higgs pairs on the Hermitian bundle $(E,H_{0})$. We first prove the monotonicity inequality and the $\epsilon$-regularity theorem for the flow.  We will adapt the same arguments used in studying the Yang--Mills flow (\cite{CShen1, CShen2}) and the Yang--Mills--Higgs flow for untwisted Higgs pairs (\cite{LZ}) to the Yang--Mills--Higgs flow for twisted Higgs pairs.

 Let $u=(x,t)\in X\times\mathbb{R}$. For any $u_{0}=(x_{0},t_{0})\in X\times\mathbb{R}^{+}$, set
\begin{equation*}
	\begin{split}
		&S_{r}(u_{0})=X\times\{t=t_{0}-r^{2}\},\\
		&T_{r}(u_{0})=X\times [t_{0}-4r^{2},t_{0}-r^{2}],\\
		&P_{r}(u_{0})=B_{r}(x_{0})\times [t_{0}-r^{2},t_{0}+r^{2}].
	\end{split}
\end{equation*}
For simplicity, we denote $S_{r}(0,0),T_{r}(0,0),P_{r}(0,0)$ by $S_{r},T_{r},P_{r}$.

The fundamental solution of (backward) heat equation with singularity at $u_{0}=(x_{0}, t_{0})$ is
\begin{equation*}
	G_{u_{0}}(x,t)=G_{(x_{0},t_{0})}(x,t)=\frac{1}{(4\pi(t_{0}-t))^{n}}\exp\Big(-\frac{|x-x_{0}|^{2}}{4(t_{0}-t)}\Big),\ \ t\leq t_{0}.
\end{equation*}
For simplicity, denote $G_{(0,0)}(x,t)$ by $G(x,t)$.

Given $0<R\leq i_{X}$, we take $f\in\ C^{\infty}_{0}(B_{R})$ satisfying $0\leq f\leq 1$, $f\equiv 1$ on $B_{R/2}$ and $|\nabla f|\leq 2/R$ on $B_{R}\setminus B_{R/2}$. Let $(A(t),\phi(t))$ be a solution of the Yang--Mills--Higgs flow with initial value $(A_{0},\phi_{0})$ and set
\begin{equation*}
	e_{2}(A,\phi)=|F_{A}|^{2}+2|\partial_{A,V}\phi|^{2},
\end{equation*}

\begin{equation*}
	\Phi(r)=r^{2}\int_{T_{r}(u_{0})}e_{2}(A,\phi)f^{2}G_{u_{0}}dv_{g}dt.
\end{equation*}
Then we have the following theorem.

\begin{thm}\label{thm:4}
	Let $(A(t),\phi(t))$ be a solution of the Yang--Mills--Higgs flow (\ref{mymh}) with initial value $(A_0,\phi_0)$. For any $u_{0}=(x_{0},t_{0})\in X\times [0,T]$ and $0<r_{1}\leq r_{2}\leq \min\{R/2,\sqrt{t_{0}}/2\}$, we have
	\begin{equation}
		\begin{split}
			\Phi(r_{1})\leq &C\exp(C(r_{2}-r_{1}))\Phi(r_{2})+C(r_{2}^{2}-r_{1}^{2})\\
			&+CR^{2-2n}\int_{P_{R}(u_{0})}e_{2}(A,\phi)dv_{g}dt,
		\end{split}
	\end{equation}
	where the constant $C$ depends only on the geometry of $(X,\omega)$, $\sup_{X}|\sqrt{-1}\Lambda_{\omega}F_{h_{V}}|$ and the initial data $(A_{0},\phi_{0})$.
\end{thm}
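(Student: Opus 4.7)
The plan is to follow the Struwe-type monotonicity argument, as used by Chen--Shen for the Yang--Mills flow and by Li--Zhang for the untwisted Higgs case, and to adapt it to the twisted setting. After substituting $\tau = t_{0}-t$, the quantity $\Phi(r)$ becomes
\begin{equation*}
\Phi(r) = r^{2}\int_{r^{2}}^{4r^{2}}\!\int_{X} e_{2}(A,\phi)\, f^{2}\, G_{u_{0}}\, dv_{g}\, d\tau,
\end{equation*}
and the strategy is to differentiate in $r$. The prefactor $r^{2}$ contributes $\frac{2}{r}\Phi(r)$, and the two endpoint contributions at $\tau = r^{2}$ and $\tau = 4r^{2}$ produce the spatial slice integrals on $S_{r}(u_{0})$ and $S_{2r}(u_{0})$, both of which will be reabsorbed as part of $\Phi(r)$ or its close variants.

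For the interior term I would invoke Proposition \ref{p:2}, together with the uniform bound on $|\phi|$ from Proposition \ref{ppp:1} and the boundedness of the fixed data $|F_{h_V}|$, $|\partial_{A}\sqrt{-1}\Lambda_{\omega}F_{h_V}|$, $|\mathrm{Ric}|$ and $|Rm|$, to obtain an evolution inequality of the form
\begin{equation*}
\Bigl(\frac{\partial}{\partial t} - \Delta\Bigr) e_{2}(A,\phi) + |\nabla_{A}F_{A}|^{2} + 2|\nabla_{A,V}\partial_{A,V}\phi|^{2} \leq C(|F_{A}|+1)\, e_{2} + C'.
\end{equation*}
The twisting-induced linear term $|\partial_{A}\sqrt{-1}\Lambda_{\omega}F_{h_{V}}||\phi||\partial_{A,V}\phi|$ from Proposition \ref{p:2} is handled by Cauchy--Schwarz using the uniform bound on $|\phi|$, contributing to the additive constant $C'$. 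Plugging this into the $r$-derivative of $\Phi(r)$ and integrating by parts in $x$ against $f^{2}G_{u_{0}}$, one uses the backward heat identity $(\partial_{t}+\Delta)G_{u_{0}}=0$ (exact on $\mathbb{R}^{2n}$, up to $O(1)$ curvature corrections on $(X,\omega)$) to cancel the $\Delta$-piece of the left-hand side against the time derivative of $G_{u_{0}}$, while the cubic contribution $|F_{A}|^{3}$ is absorbed by the positive term $|\nabla_{A}F_{A}|^{2}$ via Cauchy--Schwarz on $\nabla G_{u_{0}} = -\frac{x-x_{0}}{2\tau}G_{u_{0}}$.

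The remaining terms come from the cutoff $f$, supported on $B_{R}\setminus B_{R/2}$, where $|\nabla f|^{2}\leq 4/R^{2}$ and $G_{u_{0}}\leq C R^{-2n}$ by Gaussian decay (since $\tau \leq 4r^{2}\leq R^{2}$ and $|x-x_{0}|\geq R/2$). These contribute at most $CR^{2-2n}\int_{P_{R}(u_{0})}e_{2}\,dv_{g}\,dt$. Collecting everything produces a differential inequality of the form
\begin{equation*}
\frac{d\Phi}{dr}(r) \geq -C\,\Phi(r) - C\, r - CR^{2-2n}\!\int_{P_{R}(u_{0})} e_{2}\,dv_{g}\,dt,
\end{equation*}
and integrating from $r_{1}$ to $r_{2}$ with Gronwall's lemma yields the announced estimate, the factor $\exp(C(r_{2}-r_{1}))$ coming from the $-C\Phi$ term and the additive $C(r_{2}^{2}-r_{1}^{2})$ from the $-Cr$ term.

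The main technical obstacle is the careful handling of the cubic $|F_{A}|^{3}$ contribution to the evolution inequality --- which in the twisted case is entangled with Higgs-field contributions through the mixed term $|F_{A}||\partial_{A,V}\phi|^{2}$ --- together with the precise bookkeeping of curvature corrections when transplanting heat-kernel identities from Euclidean space to a compact K\"ahler manifold. The additional contributions arising from the twisting are all tame because $(V, h_V)$ is fixed and $|\phi|$ is a priori bounded by Proposition \ref{ppp:1}, so they only enlarge the constants $C, C'$ without affecting the structure of the monotonicity argument.
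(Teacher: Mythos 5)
Your proposal correctly identifies the Chen--Shen/Struwe framework and the target differential inequality, but the mechanism you describe for obtaining it is not the one that works, and the paper does not use it.

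The critical divergence is your invocation of Proposition \ref{p:2} — the parabolic Bochner inequality for $e_{2}(A,\phi)$ — to supply the ``interior term,'' paired against the backward heat kernel via $(\partial_{t}+\Delta)G_{u_{0}}=0$, with the cubic $|F_{A}|^{3}$ contribution allegedly absorbed by $|\nabla_{A}F_{A}|^{2}$ via Cauchy--Schwarz on $\nabla G_{u_{0}}$. This step does not close. The right-hand side of Proposition \ref{p:2} contains $C|F_{A}|^{3}$, and when you integrate $(\partial_{t}-\Delta)e_{2}+|\nabla_{A}F_{A}|^{2}+2|\nabla_{A,V}\partial_{A,V}\phi|^{2}\leq C|F_{A}|e_{2}+\cdots$ against $f^{2}G_{u_{0}}$, there is no Cauchy--Schwarz that trades a weighted $L^{1}$-integral of $|F_{A}|^{3}$ for a weighted $L^{2}$-integral of $\nabla_{A}F_{A}$ without a pointwise bound on $|F_{A}|$ — which is exactly what is not available at the monotonicity stage (the paper only obtains such a bound later, inside the proof of the $\epsilon$-regularity Theorem \ref{thm:5}, \emph{after} the rescaling normalizes $e_{2}\leq 16$; there the Bochner inequality is used legitimately). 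This is a well-known obstruction and is precisely the reason the monotonicity formula for Yang--Mills-type flows is not proved through the Bochner route.

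What the paper actually does is a Pohozaev-type argument that never touches $\Delta e_{2}$. One rescales $x=r\tilde{x}$, $t=t_{0}+r^{2}\tilde{t}$ so that the domain $[-4,-1]\times\mathbb{R}^{2n}$ is $r$-independent (hence no endpoint terms at all — your description of contributions at $\tau=r^{2}$ and $\tau=4r^{2}$ after the substitution $\tau=t_{0}-t$ does not match the computation). Differentiating in $r$ then produces the radial derivative $x^{i}\nabla_{i}e_{2}$ ($=I_{2}$) and the scaled time derivative $(t-t_{0})\partial_{t}e_{2}$ ($=I_{3}$). The Bianchi identity converts $x^{i}\nabla_{i}F_{A}$ into $D_{A}(x\odot F_{A})$ plus controlled commutator errors; integrating by parts against $f^{2}G_{u_{0}}$ yields $D_{A}^{*}F_{A}$, which is then replaced by $\partial A/\partial t$ using the flow equation (\ref{mymh}), and likewise $\partial_{A,V}^{*}\partial_{A,V}\phi$ is replaced via the second flow equation. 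The payoff is the exact completion of perfect squares $||t-t_{0}|\partial_{t}A-x\odot F_{A}|^{2}$ and $||t-t_{0}|\partial_{t}\phi-x\odot\nabla_{A,V}\phi|^{2}$, whose nonnegativity is the engine of the monotonicity, with the twisted contributions ($F_{h_{V}}$, $[\phi,\phi^{*}]$) entering as bounded error terms thanks to Proposition \ref{ppp:1}. This is first-order in the flow and needs only the $L^{2}$-energy bound from Proposition \ref{p:1}, not a Bochner--Weitzenb\"ock estimate. You would need to restructure your argument around the flow equations and the Bianchi identity in the rescaled variables, not around the parabolic evolution inequality for $e_{2}$.
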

\begin{proof}
	Choosing normal geodesic coordinates $\{x^{i}\}_{i=1}^{2n}$ in the geodesic ball $B_{R}(x_{0})$, then it follows that
	\begin{equation}\label{a}
		|g_{ij}(x)-\delta_{ij}|\leq C|x|^{2},\ \ |\partial_{k}g_{ij}(x)|\leq C|x|,\ \ \forall x\in B_{r}(x_0),
	\end{equation}
	where $C$ is a positive constant depending only on $x_{0}$.
	
Let $x=r\tilde{x}$, $t=t_{0}+r^{2}\tilde{t}$. There holds that
\begin{equation*}
	\begin{split}
		\Phi(r)&=r^{2}\int_{T_{r}(u_{0})}e_{2}(A,\phi)f^{2}G_{u_{0}}dv_{g}dt\\
		&=r^{2}\int_{t_{0}-4r^{2}}^{t_{0}-r^{2}}\int_{\mathbb{R}^{2n}}e_{2}(A,\phi)(x,t)f^{2}(x)G_{u_{0}}(x,t)\sqrt{\det{(g_{ij})}}(x)dxdt\\
		&=r^{4}\int_{-4}^{-1}\int_{\mathbb{R}^{2n}}e_{2}(A,\phi)(r\tilde{x},t_{0}+r^{2}\tilde{t})f^{2}(r\tilde{x})G(\tilde{x},\tilde{t})\sqrt{\det{(g_{ij})}}(r\tilde{x})d\tilde{x}d\tilde{t}.
	\end{split}
\end{equation*}
Then one can see that
\begin{equation*}
	\begin{split}
		&\frac{d\Phi(r)}{dr}=4r^{3}\int_{-4}^{-1}\int_{\mathbb{R}^{2n}}e_{2}(A,\phi)(r\tilde{x},t_{0}+r^{2}\tilde{t})f^{2}(r\tilde{x})G(\tilde{x},\tilde{t})\sqrt{\det{(g_{ij})}}(r\tilde{x})d\tilde{x}d\tilde{t}\\
		&+r^{3}\int_{-4}^{-1}\int_{\mathbb{R}^{2n}}\{x^{i}\partial_{i}e_{2}(A,\phi)(r\tilde{x},t_{0}+r^{2}\tilde{t})\}f^{2}(r\tilde{x})G(\tilde{x},\tilde{t})\sqrt{\det{(g_{ij})}}(r\tilde{x})d\tilde{x}d\tilde{t}\\
		&+r^{3}\int_{-4}^{-1}\int_{\mathbb{R}^{2n}}\{2(t-t_{0})\partial_{t}e_{2}(A,\phi)(r\tilde{x},t_{0}+r^{2}\tilde{t})\}f^{2}(r\tilde{x})G(\tilde{x},\tilde{t})\sqrt{\det{(g_{ij})}}(r\tilde{x})d\tilde{x}d\tilde{t}\\
		&+r^{4}\int_{-4}^{-1}\int_{\mathbb{R}^{2n}}e_{2}(A,\phi)(r\tilde{x},t_{0}+r^{2}\tilde{t})\frac{d}{dr}\{f^{2}(r\tilde{x})\sqrt{\det{(g_{ij})}}(r\tilde{x})\}G(\tilde{x},\tilde{t})d\tilde{x}d\tilde{t}\\
		=&I_{1}+I_{2}+I_{3}+I_{4}.
	\end{split}
\end{equation*}
For the second term $I_2$, we have
\begin{equation*}
	\begin{split}
		I_{2}=&r\int_{T_{r}(u_{0})}\{x^{i}\partial_{i}e_{2}(A,\phi)(x,t)\}f^{2}(x)G_{u_{0}}(x,t)\sqrt{\det{(g_{ij})}}(x)dxdt,\\
	\end{split}
\end{equation*}
where
\begin{equation*}
	\begin{split}
		x^{i}\partial_{i}e_{2}(A,\phi)=&2\Re(\langle x^{i}\nabla_{i} F_{A},F_{A}\rangle+2\langle x^{i}\nabla_{i}\partial_{A,V}\phi,\partial_{A,V}\phi\rangle).
	\end{split}
\end{equation*}
 By the Bianchi identity, we have
\begin{equation*}
	\begin{split}
		2\langle x^{i}\nabla_{i} F_{A},F_{A}\rangle=&\langle x^{i}\nabla_{i} F_{A}(\partial_{j},\partial_{k})dx^{j}\wedge dx^{k},F_{A}\rangle\\
		=&\langle x^{i}\nabla_{j} F_{A}(\partial_{i},\partial_{k})dx^{j}\wedge dx^{k},F_{A}\rangle+\langle x^{i}\nabla_{k} F_{A}(\partial_{j},\partial_{i})dx^{j}\wedge dz^{k},F_{A}\rangle\\
		=&2\langle x^{i}D_{A}(F_{A,ik}dx^{k})-x^{i}(F_{A}(\nabla_{j}\partial_{i},\partial_{k})+F_{A}(\partial_{i},\nabla_{j}\partial_{k}))dx^{j}\wedge dx^{k},F_{A}\rangle\\
		=&2\langle D_{A}(x^{i}F_{A,ik}dx^{k})-x^{i}F_{A}(\nabla_{j}\partial_{i},\partial_{k})dx^{j}\wedge dx^{k},F_{A}\rangle-4|F_{A}|^{2}.
	\end{split}
\end{equation*}
Set
$$x\odot F_{A}=\frac{1}{2}x^{i}F_{A,ij}dx^{j},$$
we have
\begin{equation}\label{eqn:35}
	\begin{split}
		\Re\langle x\odot F_{A},D_{A}^{*}F_{A}\rangle=-\Re\langle x\odot F_{A},\frac{\partial A}{\partial t}\rangle
		+\Re\langle x\odot F_{A},(\bar{\partial}_{A}-\partial_{A})[\phi,\phi^{*}]\rangle.
	\end{split}
\end{equation}
In addition,
\begin{equation*}
	\begin{split}
		\langle x^{i}\nabla_{i}\nabla_{A,V}\phi,\nabla_{A,V}\phi\rangle=&\langle x^{i}\nabla_{i}(\nabla_{j}\phi dx^{j}),\nabla_{A,V}\phi\rangle\\
		=&\langle x^{i}\nabla_{j}\nabla_{i}\phi dx^{j},\nabla_{A,V}\phi\rangle+\langle x^{i}F_{A,V;ij}\phi dx^{j},\nabla_{A,V}\phi\rangle\\
		&+\langle x^{i}\nabla_{j}\phi \nabla_{i}dx^{j}),\nabla_{A,V}\phi \rangle\\
		=&\langle D_{A,V}(x^{i}\nabla_{i}\phi),\nabla_{A,V}\phi\rangle-\langle x^{i}\nabla_{A,V}\phi (\nabla_{i}\partial_{j})dx^{j}),\nabla_{A,V}\phi \rangle\\
		&+\langle x^{i}F_{A,V;ij}\phi dx^{j},\nabla_{A,V}\phi\rangle-|\nabla_{A,V}\phi|^{2},
	\end{split}
\end{equation*}
and
\begin{equation*}
	\begin{split}
		\langle x^{i}\nabla_{i}\phi, \partial_{A,V}^{*}\partial_{A,V}\phi \rangle=&\langle x^{i}\nabla_{i}\phi, [\sqrt{-1}\Lambda_{\omega}F_{A},\phi]+\sqrt{-1}\Lambda_{\omega}F_{h_{V}}\phi \rangle\\
		=&-\langle x^{i}\nabla_{i}\phi, \frac{d\phi}{dt}\rangle-\langle x^{i}\nabla_{i}\phi, [[\phi,\phi^{*}],\phi]+\sqrt{-1}\Lambda_{\omega}F_{h_{V}}\phi \rangle.
	\end{split}
\end{equation*}
Note that, for any $\alpha\in\Omega^{1}(\mbox{End}(E))$, $\alpha^{*}=-\alpha$, we have
\begin{equation*}
	\begin{split}
		\Re\langle\alpha,(\bar{\partial}_{A}-\partial_{A})[\phi,\phi^{*}]]\rangle+2\Re\langle[\alpha,\phi],\partial_{A,V}\phi\rangle=0.
	\end{split}
\end{equation*}
Set $x\odot\nabla_{A,V}\phi=\frac{1}{2}x^{i}\nabla_{A,V;i}\phi$. Since $G_{u_{0}}>0$, we have
\begin{equation*}
	\begin{split}
		I_{1}+I_{2}=&4r\int_{T_{r}(u_{0})}|\partial_{A,V}\phi|^{2}f^{2}G_{u_{0}}dv_{g}dt \\
		&-4r\Re\int_{T_{r}(u_{0})}\langle d(f^{2}G_{u_{0}})\wedge x\odot F_{A},F_{A}\rangle dv_{g}dt\\
		&-8r\Re\int_{T_{r}(u_{0})}\langle d(f^{2}G_{u_{0}})x\odot\nabla_{A,V}\phi,\nabla_{A,V}\phi\rangle dv_{g}dt\\
		&-4r\Re\int_{T_{r}(u_{0})}\langle x\odot F_{A},\frac{d A}{d t}\rangle f^{2}G_{u_{0}}dv_{g}dt \\
		&-8r\Re\int_{T_{r}(u_{0})}\langle x\odot \nabla_{A,V}\phi,\frac{d \phi}{d t}\rangle f^{2}G_{u_{0}}dv_{g}dt \\
		&-8r\Re\int_{T_{r}(u_{0})}\langle x\odot \nabla_{A,V}\phi,[[\phi,\phi^{*}],\phi]+\sqrt{-1}\Lambda_{\omega}F_{h_{V}}\phi\rangle f^{2}G_{u_{0}}dv_{g}dt\\
		&-2r\Re\int_{T_{r}(u_{0})}\langle x^{i}F_{A}(\nabla_{j}\partial_{i},\partial_{k})dx^{j}\wedge dx^{k},F_{A}\rangle f^{2}G_{u_{0}}dv_{g}dt\\
		&-4r\Re\int_{T_{r}(u_{0})}\langle x^{i}\nabla_{A,V}\phi(\nabla_{j}\partial_{i})dx^{j},\nabla_{A,V}\phi\rangle f^{2}G_{u_{0}}dv_{g}dt\\
		&+8r\Re\int_{T_{r}(u_{0})}\langle x\odot F_{V}\phi,\nabla_{A,V}\phi\rangle f^{2}G_{u_{0}}dv_{g}dt.
	\end{split}
\end{equation*}

For the second term $I_3$, we have
\begin{equation*}
	\begin{split}
		I_{3}=2r\int_{T_{r}(u_{0})}(t-t_{0})\partial_{t}e_{2}(A,\phi)(x,t)f^{2}(x)G_{u_{0}}(x,t)\sqrt{\det{(g_{ij})}}(x)dxdt,
	\end{split}
\end{equation*}
where
\begin{equation*}
	\begin{split}
		\partial_{t}e_{2}(A,\phi)=2\Re\bigg(\bigg\langle D_{A}\bigg(\frac{\partial A}{\partial t}\bigg),F_{A}\bigg\rangle+2\bigg\langle\bigg[\frac{\partial A^{1,0}}{\partial t},\phi\bigg],\partial_{A,V}\phi\bigg\rangle+2\bigg\langle\partial_{A,V}\frac{\partial\phi}{\partial t},\partial_{A,V}\phi\bigg\rangle \bigg).
	\end{split}
\end{equation*}
So we obtain that
\begin{equation*}
	\begin{split}
		I_{3}=&-4r\Re\int_{T_{r}(u_{0})}(t-t_{0})\Big(\Big|\frac{\partial A}{\partial t}\Big|^{2}+2\Big|\frac{\partial \phi}{\partial t}\Big|^{2}\Big)f^{2}G_{u_{0}}dv_{g}dt\\
		&-4r\Re\int_{T_{r}(u_{0})}(t-t_{0})\bigg\langle d(f^{2}G_{u_{0}})\wedge\frac{\partial A}{\partial t},F_{A}\bigg\rangle dv_{g}dt\\
		&-8r\Re\int_{T_{r}(u_{0})}(t-t_{0})\bigg\langle d(f^{2}G_{u_{0}})\frac{\partial\phi}{\partial t},\nabla_{A,V}\phi\bigg\rangle dv_{g}dt\\
		&-8r\Re\int_{T_{r}(u_{0})}(t-t_{0})\bigg\langle \frac{\partial\phi}{\partial t},[[\phi,\phi^{*}],\phi]+\sqrt{-1}\Lambda_{\omega}F_{h_{V}}\phi\bigg\rangle f^{2}G_{u_{0}}dv_{g}dt.
	\end{split}
\end{equation*}
Note that $\partial_{i}G_{u_{0}}=\frac{x^{i}G_{u_{0}}}{2(t-t_{0})}$. Set
\begin{equation*}
	\begin{split}
		x\cdot F_{A}=\frac{1}{2}g^{ij}x^{j}F_{A,ik}dx^{k},&\ \ \ \ x\cdot \nabla_{A,V}\phi=\frac{1}{2}x^{j}g^{ij}\nabla_{A,V;i}\phi,\\
		\nabla f\cdot F_{A}=2g^{ij}f^{-1}\partial_{j}f F_{A;ik},&\ \ \ \ \nabla f\cdot \nabla_{A,V}\phi=2g^{ij}f^{-1}\partial_{j}f\nabla_{i}\phi.
	\end{split}
\end{equation*}
For any $\alpha\in\Omega^{1}(\mbox{End}(E))$, $\beta\in\Gamma(V\otimes\mbox{End}(E))$, we have
\begin{equation*}
	\begin{split}
		\langle d(f^{2}G_{u_{0}})\wedge \alpha,F_{A}\rangle
		=\langle \alpha, \nabla f\cdot F_{A}\rangle f^{2}G_{u_{0}}+\frac{1}{t-t_{0}}\langle \alpha, x\cdot F_{A}\rangle f^{2}G_{u_{0}},
	\end{split}
\end{equation*}
and
\begin{equation*}
	\begin{split}
		\langle d(f^{2}G_{u_{0}})\beta,\nabla_{A,V}\phi\rangle
		=\langle \beta, \nabla f\cdot \nabla_{A,V}\phi\rangle f^{2}G_{u_{0}}+\frac{1}{t-t_{0}}\langle \beta, x\cdot \nabla_{A,V}\phi\rangle f^{2}G_{u_{0}}.
	\end{split}
\end{equation*}
Combining the above inequalities, we have
\begin{equation*}
	\begin{split}
&I_{1}+I_{2}+I_{3}\\
&=4r\int_{T_{r}(u_{0})}\frac{1}{|t-t_{0}|}\Big||t-t_{0}|\frac{\partial A}{\partial t}-x\odot F_{A}\Big|^{2}f^{2}G_{u_{0}}dv_{g}dt\\
		&+4r\Re\int_{T_{r}(u_{0})}\frac{1}{|t-t_{0}|}\Big\langle x\cdot F_{A}-x\odot F_{A},x\odot F_{A}-|t-t_{0}|\frac{\partial A}{\partial t}\Big\rangle f^{2}G_{u_{0}}dv_{g}dt\\
		&+4r\Re\int_{T_{r}(u_{0})}\Big\langle |t-t_{0}|\frac{\partial A}{\partial t}-x\odot F_{A},\nabla f\cdot F_{A}\Big\rangle f^{2}G_{u_{0}}dv_{g}dt\\
		&+8r\int_{T_{r}(u_{0})}\frac{1}{|t-t_{0}|}\Big||t-t_{0}|\frac{\partial\phi}{\partial t}-x\odot \nabla_{A,V}\phi\Big|^{2}f^{2}G_{u_{0}}dv_{g}dt\\
		&+8r\Re\int_{T_{r}(u_{0})}\frac{1}{|t-t_{0}|}\Big\langle x\cdot \nabla_{A,V}\phi-x\odot\nabla_{A,V}\phi,x\odot \nabla_{A,V}\phi-|t-t_{0}|\frac{\partial \phi}{\partial t}\Big\rangle f^{2}G_{u_{0}}dv_{g}dt\\
		&+8r\Re\int_{T_{r}(u_{0})}\Big\langle |t-t_{0}|\frac{\partial\phi}{\partial t}-x\odot \nabla_{A,V}\phi,\nabla f\cdot \nabla_{A,V}\phi\Big\rangle f^{2}G_{u_{0}}dv_{g}dt\\
		&-8r\Re\int_{T_{r}(u_{0})}\langle x\odot \nabla_{A,V}\phi,[[\phi,\phi^{*}],\phi]+\sqrt{-1}\Lambda_{\omega}F_{h_{V}}\phi\rangle f^{2}G_{u_{0}}dv_{g}dt\\
		&-2r\Re\int_{T_{r}(u_{0})}\langle x^{i}F_{A}(\nabla_{j}\partial_{i},\partial_{k})dx^{j}\wedge dx^{k},F_{A}\rangle f^{2}G_{u_{0}}dv_{g}dt\\
		&-4r\Re\int_{T_{r}(u_{0})}\langle x^{i}\nabla_{A,V}\phi(\nabla_{j}\partial_{i})dx^{j},\nabla_{A,V}\phi\rangle f^{2}G_{u_{0}}dv_{g}dt\\
		&+8r\Re\int_{T_{r}(u_{0})}\langle x\odot F_{V}\phi,\nabla_{A,V}\phi\rangle f^{2}G_{u_{0}}dv_{g}dt\\
		&-8r\Re\int_{T_{r}(u_{0})}(t-t_{0})\Big\langle \frac{\partial \phi}{\partial t},[[\phi,\phi^{*}],\phi]+\sqrt{-1}\Lambda_{\omega}F_{h_{V}}\phi\Big\rangle f^{2}G_{u_{0}}dv_{g}dt\\
		&+4r\int_{T_{r}(u_{0})}|\partial_{A,V}\phi|^{2}f^{2}G_{u_{0}}dv_{g}dt.
	\end{split}
\end{equation*}
By the Lemma \ref{ll:1} and Proposition \ref{ppp:1}, we have
\begin{equation*}
	\begin{split}
		\Big|\frac{\partial \phi}{\partial t}\Big|^{2}\leq C,\ \  |\phi|^{2}\leq C,
	\end{split}
\end{equation*}
  where the constant $C$ depends on the initial data $(A_{0},\phi_{0})$ and $\sup_{X}|\sqrt{-1}\Lambda_{\omega}F_{h_{V}}|$. Since $r\leq R\leq i_{X}$. According to the Yang's inequality, we have
\begin{equation*}
	\begin{split}
		I_{1}+I_{2}+I_{3}\geq&-Cr\int_{T_{r}(u_{0})}\frac{1}{|t-t_{0}|}|x\cdot F_{A}-x\odot F_{A}|^{2} f^{2}G_{u_{0}}dv_{g}dt\\
		&-Cr\int_{T_{r}(u_{0})}|t-t_{0}||\nabla f\cdot F_{A}|^{2} f^{2}G_{u_{0}}dv_{g}dt\\
		&-Cr\int_{T_{r}(u_{0})}\frac{1}{|t-t_{0}|}|x\cdot \nabla_{A,V}\phi-x\odot\nabla_{A,V}\phi|^{2} f^{2}G_{u_{0}}dv_{g}dt\\
		&-Cr\int_{T_{r}(u_{0})}|t-t_{0}||\nabla f\cdot \nabla_{A,V}\phi|^{2} f^{2}G_{u_{0}}dv_{g}dt\\
		&-Cr\int_{T_{r}(u_{0})}|x|^{2}|\nabla_{A,V}\phi|^{2} f^{2}G_{u_{0}}dv_{g}dt\\
		&-2r\Re\int_{T_{r}(u_{0})}\langle x^{i}F_{A}(\nabla_{j}\partial_{i},\partial_{k})dx^{j}\wedge dx^{k},F_{A}\rangle f^{2}G_{u_{0}}dv_{g}dt\\
		&-4r\Re\int_{T_{r}(u_{0})}\langle x^{i}\nabla_{A,V}\phi(\nabla_{j}\partial_{i})dx^{j},\nabla_{A,V}\phi\rangle f^{2}G_{u_{0}}dv_{g}dt\\
		&-Cr,
	\end{split}
\end{equation*}
where the constant $C$ depends on the initial data $(A_{0},\phi_{0})$ and $\sup_{X}|\sqrt{-1}\Lambda_{\omega}F_{h_{V}}|$.

For the last term $I_4$, we have
\begin{equation*}
	\begin{split}
		I_{4}=&r\int_{T_{r}(u_{0})}e_{2}(A,\phi)x^{i}\partial_{i}(f^{2}\sqrt{\det{(g_{ij})}})G_{u_{0}}dxdt\\
		=&r\int_{T_{r}(u_{0})}e_{2}(A,\phi)2x^{i}f\partial_{i}(f)G_{u_{0}}dv_{g}dt+r\int_{T_{r}(u_{0})}e_{2}(A,\phi)f^{2}x^{i}\partial_{i}(\sqrt{\det{(g_{ij})}})G_{u_{0}}dxdt\\
		=&r\int_{T_{r}(u_{0})}e_{2}(A,\phi)2x^{i}f\partial_{i}(f)G_{u_{0}}dv_{g}dt+\frac{r}{2}\int_{T_{r}(u_{0})}e_{2}(A,\phi)x^{i}\tr(g^{-1}\partial_{i}g)f^{2}G_{u_{0}}dv_{g}dt.
	\end{split}
\end{equation*}
Since
\begin{equation*}
	|g_{ij}-\delta_{ij}|\leq C|x|^{2},\ \ |\partial_{i}g_{jk}|\leq C|x|,\ \ |\Gamma_{ij}^{k}|\leq C|x|,
\end{equation*}
then
\begin{equation*}
	\begin{split}
		|x\cdot F_{A}-x\odot F_{A}|^{2}&\leq C|x|^{6}|F_{A}|^{2},\\
		|x\cdot \nabla_{A,V}\phi-x\odot \nabla_{A,V}\phi|^{2}&\leq C|x|^{6}|\nabla_{A,V}\phi|^{2},\\
		\langle x^{i}F_{A}(\nabla_{k}\partial_{i},\partial_{j})dx^{j}\wedge dx^{k},F_{A}\rangle&\leq C|x|^{2}|F_{A}|^{2},\\
		\tr(g^{-1}\partial_{i}g)&\leq C|x|.
	\end{split}
\end{equation*}
Hence, we have
\begin{equation*}
	\begin{split}
		\frac{d\Phi(r)}{dr}\geq& -Cr\int_{T_{r}(u_{0})}\frac{|x|^{6}}{|t-t_{0}|}e_{2}(A,\phi)f^{2}G_{u_{0}}dv_{g}dt\\
		&-Cr\int_{T_{r}(u_{0})}|t-t_{0}||\nabla f\cdot F_{A}|^{2}f^{2}G_{u_{0}}dv_{g}dt\\
		&-Cr\int_{T_{r}(u_{0})}|t-t_{0}||\nabla f\cdot \nabla_{A,V}\phi|^{2} f^{2}G_{u_{0}}dv_{g}dt\\
		&-Cr\int_{T_{r}(u_{0})}|x|^{2}e_{2}(A,\phi) f^{2}G_{u_{0}}dv_{g}dt\\
		&-Cr\int_{T_{r}(u_{0})}f|\nabla f||x|e_{2}(A,\phi)G_{u_{0}}dv_{g}dt\\
		&-Cr.
	\end{split}
\end{equation*}
According to the Chen--Struwe's arguments in \cite{CS}, we know there exists a constant $\tilde{C}_{4}>0$ such that
\begin{equation*}
	\begin{split}
		&r^{-1}|t-t_{0}|\cdot |x|^{6}G_{u_{0}}\leq \tilde{C}_{4}(1+G_{u_{0}}),\\
		&r^{-1}|x|^{2}G_{u_{0}}\leq \tilde{C}_{4}(1+G_{u_{0}})
	\end{split}
\end{equation*}
on $T_{r}(u_{0})$. Then it follows that
\begin{equation*}
	\begin{split}
		&-Cr\int_{T_{r}(u_{0})}\Big(\frac{|x|^{6}}{|t-t_{0}|}+|t-t_{0}|+|x|^{2}\Big)e_{2}(A,\phi)f^{2}G_{u_{0}}dv_{g}dt\\
		&\geq -C\Phi(r)-Cr\textrm{YMH}(A_{0},\phi_{0}),
	\end{split}
\end{equation*}
According to the arguments in \cite[P. 1384]{NZ}, we have
\begin{equation*}
	\begin{split}
		-r\int_{T_{r}(u_{0})}|t-t_{0}|\cdot |\nabla f\cdot F_{A}|^{2}f^{2}G_{u_{0}}dv_{g}dt&\geq-\frac{C(n)r}{R^{2n}}\int_{P_{R}(u_{0})}|F_{A}|^{2}dv_{g}dt,\\
		-r\int_{T_{r}(u_{0})}|t-t_{0}|\cdot |\nabla f\cdot \nabla_{A,V}\phi|^{2}f^{2}G_{u_{0}}dv_{g}dt&\geq-\frac{C(n)r}{R^{2n}}\int_{P_{R}(u_{0})}|\nabla_{A,V}\phi|^{2}dv_{g}dt,\\
		-2r\int_{T_{r}(u_{0})}|x|\cdot |\nabla f|\cdot |f|\cdot e_{2}(A,\phi)G_{u_{0}}dv_{g}dt&\geq-\frac{C(n)r}{R^{2n}}\int_{P_{R}(u_{0})}e_{2}(A,\phi)dv_{g}dt.
	\end{split}
\end{equation*}
Combining the  above inequalities, we have
\begin{equation}\label{Teq2}
	\begin{split}
		\frac{d\Phi(r)}{dr}\geq -C\Phi(r)-Cr-\frac{Cr}{R^{2n}}\int_{P_{R}(u_{0})}e_{2}(A,\phi)dv_{g}dt,
	\end{split}
\end{equation}
where the constant $C$ depends on the geometry of $(X,\omega)$, $\sup_{X}|\sqrt{-1}\Lambda_{\omega}F_{h_{V}}|$ and the initial data $(A_{0},\phi_{0})$.
By integrating the above inequality (\ref{Teq2}) over $r$, we complete the proof.
\end{proof}

\begin{thm}\label{thm:5}
	Let $(A(t),\phi(t))$ be a solution of the Yang--Mills--Higgs flow (\ref{mymh}). There exist positive constants $\epsilon_{0},\delta_{0}<1/4$, such that if
	\begin{equation}
		R^{2-2n}\int_{P_{R}(u_{0})}e_{2}(A,\phi)dv_{g}dt<\epsilon_{0}
	\end{equation}
	holds for some $0<R\leq \min\{i_{X}/2,\sqrt{t_{0}}/2\}$, then for any $\delta\in(0,\delta_{0})$, we have
	\begin{equation}
		\sup_{P_{\delta R}(u_{0})}e_{2}(A,\phi)\leq\frac{16}{(\delta R)^{4}}.
	\end{equation}
\end{thm}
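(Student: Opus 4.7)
The plan is to argue by contradiction via a standard blow-up + monotonicity scheme, the parabolic analog of the scheme used in Chen--Shen and Li--Zhang for related Yang--Mills(--Higgs) flows. Suppose for contradiction that $\sup_{P_{\delta R}(u_0)} e_2(A,\phi) > 16(\delta R)^{-4}$. I would introduce the auxiliary function $h(\sigma) = (\delta R - \sigma)^{4}\sup_{P_{\sigma}(u_0)} e_2(A,\phi)$ on $[0,\delta R]$. It is continuous, vanishes at $\sigma=\delta R$, and by assumption exceeds $16$ somewhere. Let $\sigma_0$ be a maximizer and $u_1 = (x_1,t_1)\in \overline{P_{\sigma_0}(u_0)}$ a point realising $e_0 := \sup_{P_{\sigma_0}}e_2(A,\phi)$, so $(\delta R - \sigma_0)^{4}e_0 > 16$. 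Set $\tau = (\delta R-\sigma_0)/2$ and $r_0 = e_0^{-1/4}$, which satisfies $r_0 < \tau$. The maximality of $h$ at $\sigma_0$ forces $\sup_{P_{\tau}(u_1)} e_2(A,\phi) \le 16\,e_0$, since $P_{\tau}(u_1)\subset P_{\sigma_0+\tau}(u_0)$.

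Next I would perform the parabolic blow-up at $u_1$ of scale $r_0$: put $\tilde A(\tilde x,\tilde t) = r_0 A(x_1+r_0\tilde x, t_1+r_0^{2}\tilde t)$ and $\tilde\phi(\tilde x,\tilde t) = r_0\phi(x_1+r_0\tilde x,t_1+r_0^{2}\tilde t)$. Because $e_2$ has weight $4$, this gives $\tilde e_2 \le 16$ on $P_1(0,0)$ with $\tilde e_2(0,0)=1$. On the blown-up scale, $|\tilde\phi|=r_0|\phi|\le Cr_0$ by Proposition~\ref{ppp:1}, while the rescaled metric tends to Euclidean and the curvature of $V$ and the Ricci terms pick up an $r_0^{2}$ factor, so all the ``low-order'' coefficients in Proposition~\ref{p:2} become harmless constants. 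Thus the rescaled energy density satisfies, on $P_1(0,0)$, a differential inequality of the shape
\begin{equation*}
\Bigl(\frac{\partial}{\partial\tilde t}-\tilde\Delta\Bigr)\tilde e_2 \le C\tilde e_2 + C,
\end{equation*}
to which the standard parabolic Moser iteration (or mean-value inequality) applies. This yields $1=\tilde e_2(0,0)\le C\int_{P_1(0,0)}\tilde e_2\,d\tilde v\,d\tilde t$, which after undoing the scaling reads $1 \le C\, r_0^{2-2n}\int_{P_{r_0}(u_1)} e_2(A,\phi)\,dv_g\,dt$.

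The final step is to kill the right-hand side using the monotonicity formula of Theorem~\ref{thm:4}. I would compare the local integral $r_0^{2-2n}\int_{P_{r_0}(u_1)} e_2\,dv_g\,dt$ with the weighted quantity $\Phi_{u_1^{+}}(r_0)$ based at the slightly-shifted centre $u_1^{+}=(x_1,t_1+2r_0^{2})$, using that the Gaussian $G_{u_1^{+}}$ is uniformly bounded below by $c\,r_0^{-2n}$ on $P_{r_0}(u_1)\subset T_{r_0}(u_1^{+})$. Then the monotonicity of $\Phi_{u_1^{+}}$ from $r_0$ up to scale $R/4$, combined with the obvious bound $\Phi_{u_1^{+}}(R/4)\le C R^{2-2n}\int_{P_{R/2}(u_1^{+})}e_2\,dv_g\,dt \le C R^{2-2n}\int_{P_{R}(u_0)}e_2\,dv_g\,dt$, gives
\begin{equation*}
1 \le C\Bigl(R^{2-2n}\!\int_{P_R(u_0)} e_2(A,\phi)\,dv_g\,dt + R^{2}\Bigr) < C(\epsilon_0 + R^{2}).
\end{equation*}
Choosing $\epsilon_0$ and $\delta_0$ sufficiently small (so that $R\le\delta_0\cdot 2$ is small) yields a contradiction.

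The main obstacle is not the Moser iteration itself, which is routine once Proposition~\ref{p:2} is in hand, but rather arranging the monotonicity chain cleanly: the monotonicity from Theorem~\ref{thm:4} is centred at $u_1$, whereas the initial smallness hypothesis is on $P_R(u_0)$, and $u_1$ only lies in $P_{\sigma_0}(u_0)\subset P_{\delta R}(u_0)$. One must therefore carefully choose the shifted centre $u_1^{+}$ and verify that the Gaussian lower bound on $P_{r_0}(u_1)$, together with the inclusions $P_{R/2}(u_1^{+})\subset P_R(u_0)$ (which holds provided $\delta_0$ is small enough), transports the smallness hypothesis from $u_0$ to the scale-invariant integral at $u_1$ without losing the small factor $\epsilon_0$.
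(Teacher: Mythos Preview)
Your overall strategy coincides with the paper's: point-picking, parabolic rescaling, the differential inequality from Proposition~\ref{p:2}, the mean-value inequality, and finally the monotonicity of Theorem~\ref{thm:4} applied at the shifted centre $(x_{1},t_{1}+2r_{0}^{2})$. Two points in the execution, however, do not go through as written.

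\textbf{The auxiliary function.} With $h(\sigma)=(\delta R-\sigma)^{4}\sup_{P_{\sigma}(u_{0})}e_{2}$ on $[0,\delta R]$, the assertion that the contradiction hypothesis $\sup_{P_{\delta R}}e_{2}>16(\delta R)^{-4}$ forces $h>16$ somewhere is unjustified: the supremum could be attained arbitrarily near $\partial P_{\delta R}$, where the weight $(\delta R-\sigma)^{4}$ is tiny. The paper avoids this by defining $f(r)=(2\delta R-r)^{4}\sup_{P_{r}(u_{0})}e_{2}$ on $[0,2\delta R]$ and proving directly that $\max f\le 16$; evaluating at $r=\delta R$ then yields the conclusion.

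\textbf{The outer scale in the monotonicity.} This is the real gap. You apply Theorem~\ref{thm:4} from $r_{0}$ up to $r_{2}=R/4$; the additive error $C(r_{2}^{2}-r_{1}^{2})$ then contributes $CR^{2}$, and you claim this is small because ``$R\le\delta_{0}\cdot 2$ is small''. That is false: $R$ is prescribed in the hypothesis and may be as large as $i_{X}/2$; nothing ties $R$ to $\delta_{0}$. Consequently $C(\epsilon_{0}+R^{2})$ need not be below $1$ and no contradiction results. The paper instead takes $r_{2}=\delta_{0}R$. The additive error becomes $C\delta_{0}^{2}R^{2}\le C\delta_{0}^{2}i_{X}^{2}$, small once $\delta_{0}$ is small, while the Gaussian bound on $\Phi(r_{2})$ produces $C\delta_{0}^{2-2n}R^{2-2n}\int_{P_{R}(u_{0})}e_{2}<C\delta_{0}^{2-2n}\epsilon_{0}$. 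One first fixes $\delta_{0}$ small (depending on the geometry of $X$) and then $\epsilon_{0}$ small depending on $\delta_{0}$; this order matters because of the factor $\delta_{0}^{2-2n}$.
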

\begin{proof}
	For any $\delta\in(0,1/4]$,  we define the function
	\begin{equation*}
		\begin{split}
			f(r)=(2\delta R-r)^{4}\sup_{P_{r}(x_{0},t_{0})}e_{2}(A,\phi).
		\end{split}
	\end{equation*}
	Since $f(r)$ is continuous and $f(2\delta R)=0$, we know that $f(r)$ attains its maximum at some point $r_{0}\in[0,2\delta R)$. Suppose $(x_{1},t_{1})\in\bar{P}_{r_{0}}(x_{0},t_{0})$ is a point such that
	\begin{equation*}
		e_{2}(A,\phi)(x_{1},t_{1})=\sup_{P_{r}(x_{0},t_{0})}e_{2}(A,\phi).
	\end{equation*}
	We claim that  $f(r_{0})\leq 16$ when $\epsilon_{0},\delta_{0}$ are small enough. Otherwise, we have
	\begin{equation*}
		\rho_{0}:=e_{2}(A,\phi)(x_{1},t_{1})^{-1/4}=(2\delta R-r_{0})f(r_{0})^{-1/4}<\delta R-\frac{r_{0}}{2}.
	\end{equation*}
	Rescaling the Riemannian metric $\tilde{g}=\rho_{0}^{-2}g$, $\tilde{h}_{V}=\rho_{0}^{2}h_{V}$ and $t=t_{1}+\rho_{0}^{2}\tilde{t}$, we get
	\begin{equation*}
		\begin{split}
			&|F_{A}|_{\tilde{g}}^{2}=\rho_{0}^{4}|F_{A}|_{g}^{2},\\
			&|\partial_{A,V}\phi|_{\tilde{g},\tilde{h}_{V}}^{2}=\rho_{0}^{4}|\partial_{A,V}\phi|_{g,h_{V}}^{2}.
		\end{split}
	\end{equation*}
	Set
	\begin{equation*}
		\begin{split}
			&e_{\rho_{0}}(x,\tilde{t})=|F_{A}|_{\tilde{g}}^{2}+2|\partial_{A,V}\phi|_{\tilde{g},\tilde{h}_{V}}^{2}=\rho_{0}^{4}e_{2}(A,\phi)(x,t_{1}+\rho_{0}^{2}\tilde{t}),\\
			&\tilde{P}_{\tilde{r}}(x_{1},0)=B_{\rho_{0}\tilde{r}}(x_{1})\times[-\tilde{r}^{2},\tilde{r}^{2}].
		\end{split}
	\end{equation*}
Then we have $e_{\rho_{0}}(x_{1},0)=\rho_{0}^{4}e(A,\phi)(x_{1},t_{1})=1$, and
	\begin{equation*}
		\begin{split}
			\sup_{\tilde{P}_{1}(x_{1},0)}e_{\rho_{0}}&=\rho_{0}^{4}\sup_{P_{\rho_{0}}(x_{1},t_{1})}e(A,\phi)\leq \rho_{0}^{4}\sup_{P_{\delta R+r_{0}/2}(x_{0},t_{0})}e(A,\phi)\\
			&\leq \rho_{0}^{4}f(\delta R+r_{0}/2)(\delta R-r_{0}/2)^{-4}\leq 16.
		\end{split}
	\end{equation*}
	Thus
	\begin{equation}\label{eqn:315}
		\begin{split}
			|F_{A}|_{\tilde{g}}^{2}+2|\partial_{A,V}\phi|_{\tilde{g},\tilde{h}_{V}}^{2}\leq 16,\ \ \ \ on\ \ \tilde{P}_{1}(x_{1},0).
		\end{split}
	\end{equation}
	Combining above inequalities together with  the Proposition \ref{p:2} yields that
	\begin{equation*}
		\begin{split}
			\bigg(\frac{\partial}{\partial  \tilde{t}}-\Delta_{\tilde{g}}\bigg)&e_{\rho_{0}}=\rho_{0}^{6}\bigg(\frac{\partial}{\partial t}-\Delta_{g}\bigg)e_{2}(A,\phi)\\
			\leq &C_{1} \rho_{0}^{6}(|\phi|^{2}+|F_{A}|+|F_{h_V}|+|Ric|)|\partial_{A,V}\phi|^{2}+C_{2}\rho_{0}^{6}|\partial_{A}\sqrt{-1}\Lambda_{\omega}F_{h_{V}}||\phi||\partial_{A,V}\phi|\\
			&+C_{3}\rho_{0}^{6}(|F_{A}|+|\phi|^{2}+|Rm|)|F_{A}|^{2}+C_{4}\rho_{0}^{6}|F_{A}||\partial_{A,V}\phi|^{2}+C_{5}\rho_{0}^{6}|\phi|^{2}|F_{h_{V}}||F_{A}|\\
			\leq &C_{6} (e_{\rho_{0}}+\rho_{0}^{8})
		\end{split}
	\end{equation*}
	on $\tilde{P}_{1}(x_{1},0)$, where the constant $C_{6}$ depends only on the geometry of $(X,\omega)$, $F_{h_{V}}$ and $\sup_{X}|\phi_{0}|_{H_{0}}$. Then by the parabolic mean value inequality, we observe
	\begin{equation}\label{eqn:316}
		\begin{split}
			1<\sup_{\tilde{P}_{1/2}(x_{1},0)}(e_{\rho_{0}}+\rho_{0}^{8})&\leq C\int_{\tilde{P}_{1}(x_{1},0)}(e_{\rho_{0}}+\rho_{0}^{8})dv_{\tilde{g}}d\tilde{t}\\
			&=C_{7}\rho_{0}^{2-2n}\int_{P_{\rho_{0}}(x_{1},t_{1})}e_{2}(A,\phi)dv_{g}dt+C_{7}\rho_{0}^{8},\\
		\end{split}
	\end{equation}
	where the constant $C_{7}$ depends only on the geometry of $(X,\omega)$, $F_{h_{V}}$ and $\sup_{X}|\phi_{0}|_{H_{0}}$.
	
	 We choose normal geodesic coordinates centred at $x_{1}$, and let $f\in C^{\infty}_{0}(B_{R/2}(x_{1}))$ be a smooth cut-off function such that $0\leq f\leq 1$, $f\equiv 1$ on $B_{R/4}(x_{1})$, $|d f|\leq 8/R$ on $B_{R/2}(x_{1})\setminus B_{R/4}(x_{1})$. Taking $r_{1}=\rho_{0}$ and $r_{2}=\delta_{0} R$, and applying the monotonicity inequality, we obtain
	\begin{equation*}
		\begin{split}
			&\rho_{0}^{2-2n}\int_{P_{\rho_{0}}(x_{1},t_{1})}e_{2}(A,\phi)dv_{g}dt\\
			\leq& C\rho_{0}^{2}\int_{P_{\rho_{0}}(x_{1},t_{1})}e_{2}(A,\phi)G_{(x_{1},t_{1}+2\rho_{0}^{2})}f^{2}dv_{g}dt\\
			\leq& C\rho_{0}^{2}\int_{T_{\rho_{0}}(x_{1},t_{1}+2\rho_{0}^{2})}e_{2}(A,\phi)G_{(x_{1},t_{1}+2\rho_{0}^{2})}f^{2}dv_{g}dt\\
			\leq& C_{*}r_{2}^{2}\int_{T_{r_{2}}(x_{1},t_{1}+2\rho_{0}^{2})}e_{2}(A,\phi)G_{(x_{1},t_{1}+2\rho_{0}^{2})}f^{2}dv_{g}dt+C_{*}C_{10}\delta_{0}^{2}R^{2}\\
			&+C_{*}(R/2)^{2-2n}\int_{P_{R/2}(x_{1},t_{1})}e_{2}(A,\phi)dv_{g}dt\\
			\leq& C_{*}\delta_{0}^{2-2n}R^{2-2n}\int_{P_{R}(x_{0},t_{0})}e_{2}(A,\phi)dv_{g}dt+C_{*}C_{10}\delta_{0}^{2}R^{2}\\
			\leq& C_{8}(\delta_{0}^{2-2n}\epsilon_{0}+\delta_{0}^{2}R^{2}),
		\end{split}
	\end{equation*}
	where the constant $C_{8}$ depends only on the geometry of $(X,\omega)$, $F_{h_{V}}$ and $\sup_{X}|\phi_{0}|_{H_{0}}$. Choosing $\epsilon_{0},\delta_{0}$ small enough such that $\tilde{C}_{4}\tilde{C}_{5}(\delta_{0}^{2-2n}\epsilon_{0}+\delta_{0}^{2}R^{2})+\tilde{C}_{4}\delta_{0}^{8}R^{8}< 1$, then a contradiction occurs. So we have $f(r_{0})\leq 16$, which implies
	\begin{equation*}
		\sup_{P_{\delta R}(u_{0})}e_{2}(A,\phi)\leq 16/(\delta R)^{4}.
	\end{equation*}
\end{proof}

Using the above $\epsilon$-regularity theorem, and following the arguments of Hong and Tian (\cite{HT}) for the Yang--Mills flow case, we give the proof of the first part of Theorem \ref{thm:main1}.

\begin{proof}[Proof of Theorem \ref{thm:main1} (1)]
	By Proposition \ref{p:1}, for any $t_{k}\rightarrow+\infty$ and $a>0$, we have
	\begin{equation*}
		\begin{split}
			\int_{t_{k}-a}^{t_{k}+a}\int_{X}\bigg(\Big|\frac{\partial A}{\partial t}\Big|^{2}+\Big|\frac{\partial \phi}{\partial t}\Big|^{2}+\Big|\frac{\partial \phi^{*}}{\partial t}\Big|^{2}\bigg)dv_{g}dt\rightarrow 0,\ \ t_{k}\rightarrow+\infty.
		\end{split}
	\end{equation*}
Thus for any $\epsilon>0$, there is a constant $K$, when $k>K$ there holds that
	\begin{equation*}
		\begin{split}
			\int_{t_{k}-a}^{t_{k}+a}\int_{X}\bigg(\Big|\frac{\partial A}{\partial t}\Big|^{2}+\Big|\frac{\partial \phi}{\partial t}\Big|^{2}+\Big|\frac{\partial \phi^{*}}{\partial t}\Big|^{2})dv_{g}dt\leq \epsilon.
		\end{split}
	\end{equation*}
	Let
	\begin{equation*}
		\begin{split}
			\Sigma=\bigcap_{0<r<i_{X}}\{x\in X,\ \liminf_{k\rightarrow+\infty}r^{4-2n}\int_{B_{r}(x)}e_{2}(A,\phi)(\cdot,t_{k})dv_{g}\geq\epsilon_{1}\},
		\end{split}
	\end{equation*}
	where $\epsilon_{1}$ will be chosen later. For $x_{1}\in X\setminus\Sigma$, there exist $r_{1}$ and subsequence of $\{t_{k}\}$ (still denoted by $\{t_{k}\}$) such that
	\begin{equation*}
		r_{1}^{4-2n}\int_{B_{r_{1}}(x_{1})}e_{2}(A,\phi)(\cdot,t_{k})dv_{g}<\epsilon_{1}.
	\end{equation*}
	Let $s=t_{k}-r_{1}^{2}$, $\tau=t_{k}+r_{1}^{2}$ for any $t\in[s,\tau]$, using the Proposition \ref{p:3}, we have
	\begin{equation*}
		\begin{split}
			\int_{B_{r_{1}/2}(x_{1})}e_{2}(A,\phi)(\cdot, t)dv_{g}\leq& \int_{B_{r_{1}}(x_{1})}e_{2}(A,\phi)(\cdot,t_{k})dv_{g}\\
			&+2\int_{t_{k}-r_{1}^{2}}^{t_{k}+r_{1}^{2}}\int_{X}\bigg(\Big| \frac{\partial A}{\partial t}\Big|^{2}+\Big| \frac{\partial \phi}{\partial t}\Big|^{2}+\Big|\frac{\partial\phi^{*}}{\partial t}\Big|^{2})dv_{g}dt\\
			&+C_{1}\Big(\int_{t_{k}-r_{1}^{2}}^{t_{k}+r_{1}^{2}}\int_{X}\bigg(\Big| \frac{\partial A}{\partial t}\Big|^{2}+\Big| \frac{\partial \phi}{\partial t}\Big|^{2}+\Big|\frac{\partial\phi^{*}}{\partial t}\Big|^{2}\bigg)dv_{g}dt\Big)^{1/2}\\
			\leq &\int_{B_{r_{1}}(x_{1})}e_{2}(A,\phi)(\cdot,t_{k})dv_{g}+C_{2}\epsilon+C_{3}\epsilon^{1/2},
		\end{split}
	\end{equation*}
	where $C_{1}$, $C_{2}$ and $C_{3}$ are constants. So we have
	\begin{equation*}
		\begin{split}
			r_{1}^{2-2n}\int_{P_{r_{1}/2}(x_{1},t_{k})}e_{2}(A,\phi)dv_{g}dt\leq& r_{1}^{2-2n}\int_{t_{k}-(r_{1}/2)^{2}}^{t_{k}+(r_{1}/2)^{2}}\int_{B_{r_{1}}(x_{1})}e_{2}(A,\phi)(\cdot,t_{k})dv_{g}dt\\
			&+(C_{1}\epsilon+C_{2}\epsilon^{1/2})r_{1}^{2-2n}\int_{t_{k}-(r_{1}/2)^{2}}^{t_{k}+(r_{1}/2)^{2}}dt\\
			\leq &\frac{1}{2}r_{1}^{4-2n}\int_{B_{r_{1}}(x_{1})}e_{2}(A,\phi)(\cdot,t_{k})dv_{g}+\frac{1}{2}(C_{1}\epsilon+C_{2}\epsilon^{1/2})r_{1}^{4-2n},
		\end{split}
	\end{equation*}
  Choosing $\epsilon_{1}=\frac{\epsilon_{0}}{2^{2n-2}}$, and $\epsilon$ small enough such that
	\begin{equation*}
		\begin{split}
			(C_{1}\epsilon+C_{2}\epsilon^{1/2})r_{1}^{4-2n}2^{2n-3}\leq \epsilon_{0}/2,
		\end{split}
	\end{equation*}
	then we have
	\begin{equation*}
		\begin{split}
			\bigg(\frac{r_{1}}{2}\bigg)^{2-2n}\int_{P_{r_{1}/2}(x_{1},t_{k})}e_{2}(A,\phi)dv_{g}dt< \epsilon_{0}.
		\end{split}
	\end{equation*}
	By the $\epsilon$-regularity, we have
	\begin{equation}
		\sup_{P_{\delta_{0} r_{1}}(x_{1},t_{k})}e_{2}(A,\phi)\leq\frac{16}{(\delta_{0} r_{1})^{4}}.
	\end{equation}
	
	{\bf $\Sigma$ closed:} For any $x\in B_{\delta_{0}r_{1}}(x_{1})$, we choose $r_{x}$ small enough such that $B_{r_{x}}(x)\subset B_{\delta_{0}r_{1}}(x_{1})$ and
	\begin{equation*}
		\begin{split}
			r_{x}^{4-2n}\int_{B_{r_{x}}(x)}e_{2}(A,\phi)(\cdot,t_{k})dv_{g}\leq r_{x}^{4-2n}r_{x}^{2n}\frac{16}{(\delta_{0} r_{1})^{4}}\leq \frac{16 r_{x}^{4}}{(\delta_{0} r_{1})^{4}}<\epsilon_{1},
		\end{split}
	\end{equation*}
	that is, $B_{\delta_{0}r_{1}}(x_{1})\subset X\setminus \Sigma$. So $\Sigma$ is closed.
	
	{\bf $\mathcal{H}^{2n-4}(\Sigma)$:} Since $\Sigma$ is closed, for any $\delta>0$, there exist a finite number of geodesic balls $\{B_{r_{i}}(x_{i})\}$, $r_{i}<\delta$, such that $\{B_{r_{i}}(x_{i})\}$ is a cover of $\Sigma$, where $x_{i}\in\Sigma$ and $B_{r_{i}/2}(x_{i})\cap B_{r_{j}/2}(x_{j})=\emptyset$ for any $i\neq j$. Since $x_{i}\in\Sigma$, therefore
	\begin{equation*}
		\begin{split}
			r_{i}^{4-2n}\int_{B_{r_{i}/2}(x_{i})}e_{2}(A,\phi)(\cdot,t_{k})dv_{g}>2^{4-2n}\epsilon_{1}
		\end{split}
	\end{equation*}
	for sufficiently large $k$. That is,
	\begin{equation*}
		\begin{split}
			r_{i}^{2n-4}<2^{2n-4}\epsilon_{1}^{-1}\int_{B_{r_{i}/2}(x_{i})}e_{2}(A,\phi)(\cdot,t_{k})dv_{g},
		\end{split}
	\end{equation*}
	\begin{equation*}
		\begin{split}
			\sum_{i}r_{i}^{2n-4}<2^{2n-4}\epsilon_{1}^{-1}\int_{\cup_{i} B_{r_{i}/2}(x_{i})}e_{2}(A,\phi)(\cdot,t_{k})dv_{g}< +\infty.
		\end{split}
	\end{equation*}
	This implies that $\mathcal{H}^{2n-4}(\Sigma)< +\infty$.
	
	{\bf Convergence:} From the previous arguments, for any $x_{0}\in X\setminus\Sigma$, there exist $r_{0}$ and $\{t_{k}\}$ such that
	\begin{equation*}
		\begin{split}
			\sup_{P_{r_{0}}(x_{0},t_{k})}e_{2}(A,\phi)\leq C.
		\end{split}
	\end{equation*}
	By Uhlenbeck's weak compactness theorem, there exist a subsequence $\{t_{k^{'}}\}$ and gauge transformation $\{\sigma(k^{'})\}$ such that $\sigma(k^{'})\cdot(A(t_{k^{'}}),\phi(t_{k}^{'}))$ converges to a $V$-twisted Higgs pair $(A_{\infty},\phi_{\infty})$ on $(E_{\infty},H_{\infty})$ weakly in $W^{1,2}_{loc}(X\setminus\Sigma)$ and $(A_{\infty},\phi_{\infty})$ is a solution of the equation (\ref{YMHe}) outside $\Sigma$. By the standard parabolic estimates and using Hong--Tian's argument (Proposition 6 in [23]), we know that $\sigma(k^{'})\cdot (A(t_{k^{'}}),\phi(t_{k}^{'}))$ converges to $(A_{\infty},\phi_{\infty})$ in $C_{loc}^{\infty}$-topology outside $\Sigma$.
	
	{\bf Holomorphic Orthogonal split:} By the equation (\ref{YMHe}), we have
	\begin{equation*}
		\begin{split}
			D_{A_{\infty}}\theta_{\infty}=0,\ \ \ [\theta_{\infty},\phi_{\infty}]=0,
		\end{split}
	\end{equation*}
	where $\theta_{\infty}=\sqrt{-1}\Lambda_{\omega}F_{A_{\infty}}+[\phi_{\infty},\phi_{\infty}^{*H_{\infty}}]$. Since $\theta_{\infty}$ is parallel and $\theta_{\infty}^{*H_{\infty}}=\theta_{\infty}$,  we can decompose  $E_{\infty}$ and $\phi_{\infty}$  according to the eigenvalues of $\theta_{\infty}$. So we can obtain a holomorphic orthonogal decomposition
	\begin{equation*}
		\begin{split}
			E_{\infty}=\oplus_{i=1}^{l}E_{\infty}^{i},
		\end{split}
	\end{equation*}
	and
	\begin{equation*}
		\phi_{\infty}^{i}:E_{\infty}^{i}\rightarrow V\otimes E_{\infty}^{i}.
	\end{equation*}
	Let $H_{\infty}^{i}$, $\phi_{\infty}^{i}$be the restrict of $H_{\infty}$, $\phi_{\infty}$ to $E_{\infty}^{i}$, then $(A_{\infty}^{i},\phi_{\infty}^{i})$ is a  $V$-twisted Higgs pair on $(E_{\infty}^{i},H_{\infty}^{i})$ and satisfies
	\begin{equation*}
		\sqrt{-1}\Lambda_{\omega}F_{A_{\infty}^{i}}+[\phi_{\infty}^{i},(\phi_{\infty}^{i})^{*H_{\infty}^{i}}]=\lambda_{i}Id_{E_{\infty}^{i}}.
	\end{equation*}
	
	{\bf Extend to reflexive sheaf:}
	Because $\phi(t)$ is bounded, so we have
	\begin{equation*}
		\int_{X\setminus\Sigma}|F_{A_{\infty}}|^{2}_{H_{\infty}}dv_{g}\leq C.
	\end{equation*}
	Since $\mathcal{H}^{2n-4}(\Sigma)< +\infty$, $\phi_{\infty}$ is holomorphic and $C^{0}$ bounded, and every metric $H_{\infty}^{i}$ satisfies the Higgs--Hermitian--Einstein equation, from Theorem 2 in Bando and Siu's article (\cite{BS}), we know that every $(E_{\infty}^{i},\bar{\partial}_{A_{\infty}^{i}})$ can be extended to the whole $X$ as a reflexive sheaf (which is also denoted by $(E_{\infty}^{i},\bar{\partial}_{A_{\infty}^{i}})$ for simplicity), $\phi_{\infty}^{i}$ and $H_{\infty}^{i}$ can be smoothly extended over the place where the sheaf $(E_{\infty}^{i},\bar{\partial}_{A_{\infty}^{i}})$ is locally free.
\end{proof}

\begin{cor}\label{cor}
	Let $(A(t_{k}),\phi(t_{k}))$ be a sequence of $V$-twisted Higgs pairs along the Yang--Mills--Higgs flow with the limit $(A_{\infty},\phi_{\infty})$. Then
	\begin{itemize}
		\item[(1)] $\theta(A(t_{k}),\phi(t_{k}))\rightarrow \theta(A_{\infty},\phi_{\infty})$ strongly in $L^{p}$ as $k\rightarrow +\infty$ for all $1\leq p<+\infty$ and $\lim_{t\rightarrow +\infty}\|\theta(A(t),\phi(t))\|_{L^{2}}^{2}=\|\theta(A_{\infty},\phi_{\infty})\|_{L^{2}}^{2}$.
		\item[(2)] $\|\theta(A_{\infty},\phi_{\infty})\|_{L^{\infty}}\leq\|\theta(A(t_{k}),\phi(t_{k}))\|_{L^{\infty}}\leq \|\theta(A(t_{0}),\phi(t_{0}))\|_{L^{\infty}}$ for $0\leq t_{0}\leq t_{k}$.
	\end{itemize}
\end{cor}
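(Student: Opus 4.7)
The plan is to derive both items from a single scalar heat inequality for $|\theta|^2$ along the Donaldson heat flow (\ref{DF}), transfer everything to the Yang--Mills--Higgs flow (\ref{mymh}) via the complex gauge equivalence of Section \ref{sec:Pre}, and then pass to the limit using the $C^\infty_{loc}$-convergence on $X\setminus\Sigma$ supplied by Theorem \ref{thm:main1}(1).

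\emph{Step 1: a scalar heat inequality for $|\theta|^2$.} Writing $\theta(H(t))=\Phi(H(t))+\lambda\,\mathrm{Id}_E$, one has
$$|\theta|^2_{H(t)}=|\Phi|^2_{H(t)}+2\lambda\,\mathrm{tr}\,\Phi(H(t))+\lambda^2\,\mathrm{rk}(E).$$
The two identities of Lemma \ref{ll:1} then combine into
$$\bigl(\partial_t-\Delta\bigr)|\theta|^2_{H(t)}=-4|\bar{\partial}_E\Phi|^2_{\omega,H(t)}-4|[\phi,\Phi]|^2_{h_V,H(t)}\leq 0.$$
The maximum principle gives $t\mapsto\sup_X|\theta|^2_{H(t)}$ non-increasing, and integrating over $X$ (so that the Laplacian term vanishes) gives $t\mapsto\int_X|\theta|^2_{H(t)}\,dv_g$ non-increasing.

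\emph{Step 2: transfer to the YMH flow.} The complex gauge $\sigma(t)$ with $\sigma^{*H_0}\sigma=H_0^{-1}H(t)$ relating (\ref{DF}) to (\ref{mymh}) intertwines the two versions of $\theta$ by conjugation, $\theta(A(t),\phi(t))=\sigma(t)\,\theta(H(t))\,\sigma(t)^{-1}$; since $\theta(A(t),\phi(t))$ is Hermitian w.r.t. $H_0$ and $\theta(H(t))$ w.r.t. $H(t)$, the cyclic property of trace yields the pointwise identity
$$|\theta(A(t),\phi(t))|_{H_0}^2=\mathrm{tr}\bigl(\theta(A(t),\phi(t))^2\bigr)=\mathrm{tr}\bigl(\theta(H(t))^2\bigr)=|\theta(H(t))|_{H(t)}^2.$$
Hence both $\|\theta(A(t),\phi(t))\|_{L^\infty}$ and $\|\theta(A(t),\phi(t))\|_{L^2}^2$ are non-increasing in $t$, giving the right-hand inequality in (2) together with the existence of $\lim_{t\to+\infty}\|\theta(A(t),\phi(t))\|_{L^2}^2$.

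\emph{Step 3: passage to the limit.} Let $t_{k_j}\to+\infty$ be a subsequence along which $(A(t_{k_j}),\phi(t_{k_j}))\to(A_\infty,\phi_\infty)$ modulo gauge in $C^\infty_{loc}(X\setminus\Sigma)$. Since unitary gauge preserves $|\theta|$, for any compact $K\subset X\setminus\Sigma$,
$$\|\theta_\infty\|_{L^\infty(K)}\leq\liminf_{j\to\infty}\|\theta(A(t_{k_j}),\phi(t_{k_j}))\|_{L^\infty(X)};$$
because $\Sigma$ has Hausdorff codimension $\geq 4$, hence Lebesgue measure zero, exhausting $X\setminus\Sigma$ by such $K$ and combining with the monotonicity of Step 2 gives the left-hand inequality in (2). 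For the $L^p$-convergence, for any $\varepsilon>0$ pick an open neighborhood $U_\varepsilon\supset\Sigma$ with $\mathrm{Vol}(U_\varepsilon)<\varepsilon$: on $X\setminus U_\varepsilon$ smooth convergence controls the integral there, while the uniform $L^\infty$-bound from Step 2 forces the $L^p$-contribution on $U_\varepsilon$ to be at most $C\varepsilon$. Sending $j\to\infty$ and then $\varepsilon\downarrow 0$ proves $L^p$-convergence; taking $p=2$ identifies the monotone limit of Step 2 as $\|\theta_\infty\|_{L^2}^2$.

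\emph{Main obstacle.} The one nontrivial point is the observation in Step 1 that, thanks to the vanishing of $(\partial_t-\Delta)\,\mathrm{tr}\,\Phi$ in Lemma \ref{ll:1}, the heat inequality for $|\Phi|^2$ upgrades to one for $|\theta|^2=|\Phi+\lambda\mathrm{Id}|^2$ itself, which is precisely what makes the monotonicity in (2) literally about $\|\theta\|_{L^\infty}$ rather than about $\|\theta-\lambda\mathrm{Id}\|_{L^\infty}$. The remaining ingredients---the maximum principle, the trace identity above, and a dominated-convergence argument relying only on $\Sigma$ having measure zero---are routine.
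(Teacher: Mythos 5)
Your argument is correct, and it fills in a proof that the paper simply omits (the corollary is stated and immediately followed by Section~4). Step~1 correctly uses both halves of Lemma~\ref{ll:1} and the observation that $|\theta|^2_{H(t)}=|\Phi|^2_{H(t)}+2\lambda\,\mathrm{tr}\,\Phi+\lambda^2\mathrm{rk}(E)$ to upgrade the heat inequality from $|\Phi|^2$ to $|\theta|^2$; since $\bar{\partial}_E(\lambda\,\mathrm{Id})=0$ and $[\phi,\lambda\,\mathrm{Id}]=0$, the same right-hand side also controls $\theta$ itself. Step~2's conjugation identity $\theta(A(t),\phi(t))=\sigma(t)\theta(H(t))\sigma(t)^{-1}$ and the trace identity $|\theta(A(t))|^2_{H_0}=\mathrm{tr}(\theta(A(t))^2)=\mathrm{tr}(\theta(H(t))^2)=|\theta(H(t))|^2_{H(t)}$ are exactly right, and transfer the maximum-principle and integrated monotonicity to the YMH flow. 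Step~3's combination of monotone $L^\infty$ control, $C^\infty_{loc}$-convergence off $\Sigma$, and exhaustion via small neighborhoods $U_\varepsilon$ of the measure-zero set $\Sigma$ to get $L^p$-convergence for all finite $p$ is the standard route and is complete; the identification of $\lim_{t\to\infty}\|\theta(A(t),\phi(t))\|_{L^2}^2$ with $\|\theta_\infty\|_{L^2}^2$ then follows because a monotone sequence and one of its convergent subsequences share the same limit. This is precisely the argument the paper expects the reader to supply, modeled on the analogous corollaries in Daskalopoulos--Wentworth \cite{DW} and Li--Zhang \cite{LZ}.
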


\section{Isomorphism of the limit object and $Gr^{HNS}(E,\bar{\partial}_{A_{0}},\phi_{0})$}\label{sec:Is}

\subsection{ HNS filtration and $L^{p}$-$\delta$-approximate critical Hermitian metric of twisted Higgs bundles}
In this subsection, we show that the HN type of the limit $V$-twisted Higgs sheaf is consistent with the initial $V$-twisted Higgs bundle. The key to the proof is to obtain the existence of $L^{p}$-$\delta$-approximate critical Hermitian metric. First, let's recall the Harder--Narasimhan--Seshadri filtration of $V$-twisted Higgs bundle. The proof is almost the same as the one used in holomorphic bundles case (\cite[Sections 7.15, 7.17, 7.18]{Ko}).
\begin{lem}
	Let $(E,\bar{\partial}_{A},\phi)$ be a $V$-twisted Higgs bundle on K\"ahler manifold $(X, \omega)$. Then there is a filtration of $E$ by $\phi$-invariant coherent subsheaves
	\begin{equation}
		0=E_{0}\subset E_{1}\subset\cdots\subset E_{l}=E,
	\end{equation}
	called the Harder--Narasimhan filtration of $V$-twisted Higgs bundle $(E,\bar{\partial}_{A},\phi)$, such that $Q_{i}=
	E_{i}/E_{i-1}$ is torsion-free and semistable. Moreover, $\mu_{\omega}(Q_{i}) > \mu_{\omega}(Q_{i+1})$.
\end{lem}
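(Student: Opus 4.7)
The plan is to adapt the classical construction of the Harder--Narasimhan filtration for torsion-free coherent sheaves (as in Kobayashi, Sections~7.15--7.18) to the category of $\phi$-invariant subsheaves. Call a coherent subsheaf $F\subset E$ \emph{$\phi$-invariant} if $\phi(F)\subset V\otimes F$. The essential structural input is that the collection of $\phi$-invariant subsheaves is closed under intersections, sums, and saturations, and that any quotient $E/F$ by a $\phi$-invariant subsheaf inherits a canonical $V$-twisted Higgs structure. These closure properties let the classical inductive argument proceed almost without change.

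The first and main step is to prove that the set $\{\mu_{\omega}(F):F\subset E\text{ nonzero, $\phi$-invariant, saturated}\}$ attains a maximum, and that among those realizing it there is a unique element of maximal rank. Since every $\phi$-invariant subsheaf is in particular a coherent subsheaf of $E$, the boundedness of saturated coherent subsheaves of a torsion-free sheaf with bounded slope --- obtained via the Uhlenbeck--Yau weakly holomorphic projection technique in the K\"ahler setting --- applies directly after restricting to the $\phi$-invariant subfamily. Uniqueness at the maximal rank follows from the observation that if $F_{1}$ and $F_{2}$ both realize $(\mu_{\max},r_{\max})$, then $F_{1}+F_{2}$ is again $\phi$-invariant and satisfies $\mu_{\omega}(F_{1}+F_{2})\geq\mu_{\max}$, forcing $F_{1}=F_{2}$. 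Denote the resulting maximal destabilizer by $E_{1}$. By construction $E_{1}$ is semistable as a $V$-twisted Higgs sheaf, because any $\phi$-invariant subsheaf of $E_{1}$ is automatically a $\phi$-invariant subsheaf of $E$.

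The filtration is then produced by induction: apply the same existence result to the induced $V$-twisted Higgs structure on $E/E_{1}$ to obtain $E_{2}/E_{1}$, and iterate. The process terminates in finitely many steps because ranks strictly increase. The strict slope inequality $\mu_{\omega}(Q_{i})>\mu_{\omega}(Q_{i+1})$ follows from the maximality: if $\mu_{\omega}(Q_{i+1})\geq\mu_{\omega}(Q_{i})$, then the preimage of $Q_{i+1}$ in $E_{i+1}$ would be a $\phi$-invariant extension of $E_{i-1}$ violating either the maximality of the slope or the maximality of the rank of the destabilizer at the stage where $E_{i}/E_{i-1}\subset E/E_{i-1}$ was chosen. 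The only nontrivial obstacle is the initial boundedness/existence step; the remaining uniqueness, induction, and comparison of slopes are formal consequences of the closure properties of $\phi$-invariant subsheaves.
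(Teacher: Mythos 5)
Your proposal is correct and follows essentially the same route that the paper takes, namely adapting Kobayashi's classical construction (Sections 7.15--7.18) to the category of $\phi$-invariant saturated subsheaves via their closure under sums, intersections, and saturation and the inheritance of the twisted Higgs structure on quotients; the paper itself only offers this adaptation as a one-line citation. The only small quibble is attributing the upper-boundedness of slopes to the Uhlenbeck--Yau weakly holomorphic projection technique, whereas Kobayashi's argument is the Chern--Weil/second-fundamental-form estimate combined with discreteness of degrees, but this does not affect the validity of the argument.
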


\begin{lem}
	Let $(F,\phi_{F})$ be a semistable $V$-twisted Higgs sheaf on K\"ahler manifold $(X, \omega)$. Then there is a filtration of $F$ by $\phi_{F}$-invariant coherent subsheaves
	\begin{equation}
		0=F_{0}\subset F_{1}\subset\cdots\subset F_{l}=F,
	\end{equation}
	called the Seshadri filtration of $V$-twisted Higgs bundle $(F,\phi_{F})$, such that $Q_{i}=
	F_{i}/F_{i-1}$ is torsion-free and stable. Moreover, $\mu_{\omega}(Q_{i}) = \mu_{\omega}(F)$.
\end{lem}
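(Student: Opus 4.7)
The plan is to prove this by induction on $\mathrm{rk}(F)$, following the classical Seshadri construction in \cite[Sections 7.15--7.18]{Ko} but consistently working with $\phi_F$-invariant coherent subsheaves. The base case $\mathrm{rk}(F)=1$ is immediate, since a torsion-free rank-one sheaf is automatically stable and the trivial filtration $0\subset F$ suffices. For the inductive step, if $F$ is already stable we are done. Otherwise, failure of stability combined with semistability yields a proper $\phi_F$-invariant coherent subsheaf $F'\subsetneq F$ with $\mu_\omega(F')=\mu_\omega(F)$, and I would choose among all such subsheaves one of minimal positive rank, then replace it by its saturation $F_1$ in $F$. Saturation preserves the rank, can only raise the slope, and preserves $\phi_F$-invariance (see below); by minimality the slope remains $\mu_\omega(F)$.

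The next step is to verify that $F_1$ is $\phi_F$-stable. The argument I have in mind: for any proper $\phi_F|_{F_1}$-invariant subsheaf $G\subsetneq F_1$, the inclusion $G\hookrightarrow F$ is $\phi_F$-invariant, so semistability of $F$ forces $\mu_\omega(G)\leq\mu_\omega(F_1)$, and if equality held then the minimality of $\mathrm{rk}(F_1)$ would give $\mathrm{rk}(G)\geq\mathrm{rk}(F_1)$, contradicting $G\subsetneq F_1$. Setting $Q_1:=F/F_1$, the saturation of $F_1$ guarantees that $Q_1$ is torsion-free, additivity of rank and degree gives $\mu_\omega(Q_1)=\mu_\omega(F)$, and $\phi_F$ descends to a well-defined $\phi_{Q_1}\colon Q_1\to V\otimes Q_1$ still satisfying $\phi_{Q_1}\wedge\phi_{Q_1}=0$. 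To check that $(Q_1,\phi_{Q_1})$ is semistable, I would lift any $\phi_{Q_1}$-invariant $H\subset Q_1$ to its preimage $\tilde H\subset F$, which contains $F_1$ and is $\phi_F$-invariant, and then apply semistability of $F$ to deduce $\mu_\omega(H)\leq\mu_\omega(F)=\mu_\omega(Q_1)$. Since $\mathrm{rk}(Q_1)<\mathrm{rk}(F)$, the induction hypothesis produces a Seshadri filtration of $(Q_1,\phi_{Q_1})$, whose preimage in $F$, prepended with $0\subset F_1$, is the desired filtration.

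The main subtlety, and the only place where the twisted Higgs setting genuinely deviates from the classical argument, is ensuring that saturation and quotient formation preserve the $\phi$-structure. I expect this to follow from the local freeness of $V$: if $F'\subset F$ is $\phi_F$-invariant with saturation $F_1$, then the composition $F_1\hookrightarrow F\to V\otimes F\to V\otimes(F/F_1)$ must vanish because $F_1/F'$ is torsion while $V\otimes(F/F_1)$ is torsion-free, so $\phi_F|_{F_1}$ factors through $V\otimes F_1$. The same torsion-freeness reasoning makes the descent of $\phi_F$ to $Q_1$ unambiguous, and the identity $\phi_{Q_1}\wedge\phi_{Q_1}=0$ is inherited directly from $\phi_F\wedge\phi_F=0$. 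Once these compatibility checks are in place, the induction closes and the slope equality $\mu_\omega(Q_i)=\mu_\omega(F)$ is built into each step by construction.
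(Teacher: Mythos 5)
Your proof is a correct and careful spelling-out of the classical Seshadri (Jordan--H\"older) construction from \cite[Sections 7.15--7.18]{Ko}, which is precisely what the paper defers to without giving details; in particular, the extra verification specific to the twisted setting--that saturation and quotient formation preserve $\phi_F$-invariance, using that $V$ is locally free so $V\otimes(F/F_1)$ is torsion-free--is exactly the point that needs to be checked, and you handle it correctly. One small imprecision: $G\subsetneq F_1$ does not by itself force $\mathrm{rk}(G)<\mathrm{rk}(F_1)$, so the stated contradiction is not immediate; but since stability is tested only against subsheaves with $0<\mathrm{rk}(G)<\mathrm{rk}(F_1)$, the minimality of $\mathrm{rk}(F_1)$ among $\phi_F$-invariant subsheaves of slope $\mu_\omega(F)$ still yields the required strict inequality, and the argument goes through.
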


\begin{prop}
	Let $(E,\bar{\partial}_{A},\phi)$ be a $V$-twisted Higgs bundle on K\"ahler manifold $(X, \omega)$. Then there is a double filtration $\{E_{i,j}\}$ of $E$ called $\phi$-invariant Harder--Narasimhan--Seshadri filtration, such that $\{E_{i}\}_{i=1}^{l}$ is the HN filtration, and $\{E_{i,j}\}_{j=1}^{l_{i}}$ is a Seshadri filtration of $E_{i}/E_{i-1}$. Let $Q_{i,j}=E_{i,j}/E_{i,j-1}$, the associated graded object $Gr^{HNS}(E,\bar{\partial}_{A},\phi)=\oplus_{i}^{l}\oplus_{j=1}^{l_{i}} Q_{i,j}$ is uniquely determined by the isomorphism class of $(E,\bar{\partial}_{A},\phi)$.
\end{prop}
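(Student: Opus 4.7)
The plan is to derive the proposition directly from the two preceding lemmas, with the only substantive content being the uniqueness of the associated graded object up to isomorphism. Existence of the double filtration is immediate: first apply the Harder--Narasimhan lemma to $(E,\bar{\partial}_{A},\phi)$ to obtain the $\phi$-invariant filtration $\{E_{i}\}_{i=0}^{l}$ with $\mu_{\omega}(Q_{i})$ strictly decreasing. Each quotient $(Q_{i},\phi_{Q_{i}})=(E_{i}/E_{i-1},\phi|_{E_{i}/E_{i-1}})$ is torsion-free and semistable as a $V$-twisted Higgs sheaf, so the Seshadri lemma yields a $\phi_{Q_{i}}$-invariant filtration $\{F_{i,j}\}_{j=0}^{l_{i}}$ of $Q_{i}$ whose quotients are stable of slope $\mu_{\omega}(Q_{i})$. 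Pulling these back along the quotient map $E_{i}\to Q_{i}$ gives subsheaves $E_{i,j}\subset E_{i}$ with $E_{i,j}/E_{i-1}=F_{i,j}$; setting $Q_{i,j}=E_{i,j}/E_{i,j-1}$ yields the desired double filtration.

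For uniqueness, I would separate the two layers. The HN filtration is uniquely determined by $(E,\bar{\partial}_{A},\phi)$: if $\{E_{i}'\}$ were another such filtration, the slope conditions force any non-zero $\phi$-invariant map $E_{1}\to Q_{j}'$ to be zero whenever $\mu_{\omega}(E_{1})>\mu_{\omega}(Q_{j}')$, and then a standard induction on rank shows $E_{i}=E_{i}'$ as $\phi$-invariant subsheaves. Thus the subquotients $Q_{i}$ are canonical, and it remains only to show that the graded object $\oplus_{j}Q_{i,j}$ of a Seshadri filtration of the semistable $(Q_{i},\phi_{Q_{i}})$ is unique up to isomorphism of $V$-twisted Higgs sheaves.

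The hard part is this last uniqueness statement, since the Seshadri filtration itself is genuinely non-unique. The standard approach, which adapts verbatim to the $V$-twisted setting, is induction on the rank of $Q_{i}$: given two Seshadri filtrations $\{F_{i,j}\}$ and $\{F_{i,j}'\}$, one compares the smallest stable $\phi$-invariant subsheaves $F_{i,1}$ and $F_{i,1}'$ using the fact that a non-zero $\phi$-invariant morphism between two stable $V$-twisted Higgs sheaves of the same slope is either an isomorphism (in the saturation sense) or zero. One splits into the cases $F_{i,1}\cong F_{i,1}'$ and $F_{i,1}\not\cong F_{i,1}'$; in the latter case one shows $F_{i,1}'$ injects $\phi$-equivariantly into $Q_{i}/F_{i,1}$, so that both graded objects contain $F_{i,1}$ and $F_{i,1}'$ as summands, and one concludes by applying the induction hypothesis to the quotient $Q_{i}/(F_{i,1}+F_{i,1}')^{\text{sat}}$.

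The only place where the argument differs from the untwisted setting is the need to track $\phi$-invariance throughout, and to invoke the $\phi$-invariant analogue of the ``stable-to-stable morphism is zero or iso'' dichotomy; both are routine once one observes that the category of $V$-twisted Higgs sheaves on $(X,\omega)$ is abelian and that kernels, images, and saturations of $\phi$-equivariant maps remain $\phi$-invariant. With this, the associated graded $Gr^{HNS}(E,\bar{\partial}_{A},\phi)=\oplus_{i=1}^{l}\oplus_{j=1}^{l_{i}}Q_{i,j}$ depends only on the isomorphism class of $(E,\bar{\partial}_{A},\phi)$, completing the proof.
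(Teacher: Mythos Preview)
Your proposal is correct and aligns with the paper's own treatment: the paper does not give a self-contained proof of this proposition but simply remarks that the argument is the same as in the holomorphic bundle case, referring to \cite[Sections 7.15, 7.17, 7.18]{Ko}. What you have written is precisely a sketch of that standard Kobayashi argument (construct HN then Seshadri on each quotient; uniqueness of HN; Jordan--H\"older-type uniqueness of the Seshadri graded via the stable-to-stable morphism dichotomy), with the routine observation that $\phi$-invariance is preserved under kernels, images, and saturations so that the abelian-category machinery carries over verbatim to the $V$-twisted setting.
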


\begin{defn}
	For a $V$-twisted Higgs bundle $(E,\bar{\partial}_{A},\phi)$ of rank $r$, construct a nonincreasing $r$-tuple of numbers
	\begin{equation}
		\vec{\mu}(E,\bar{\partial}_{A},\phi)=(\mu_{1},\cdots, \mu_{r})
	\end{equation}
	from the HN filtration by setting: $\mu_{i}=\mu_{\omega}(Q_{j})$, for $rank(E_{j-1})+1 \leq i \leq rank(E_{j})$. We call
	$\vec{\mu}(E,\bar{\partial}_{A},\phi)$ the Harder--Narasimhan type of $(E,\bar{\partial}_{A},\phi)$.
\end{defn}
Notice that $\vec{\mu}(E_{\infty},\bar{\partial}_{A_{\infty}},\phi_{\infty})=\frac{\mbox{Vol}(X,\omega)}{2\pi}\vec{\lambda}_{\infty}=\frac{\mbox{Vol}(X,\omega)}{2\pi}(\lambda_{1},\cdots,\lambda_{r})$. In the following, we assume $\mbox{Vol}(X,\omega)=2\pi$.

For any $\phi$-invariant subsheaf $S$ of $(E,\bar{\partial}_{A},\phi)$, let $H$ be a metric on $E$, $\pi_{S}^{H}:E\rightarrow E$ be the induced projection map of $E$ to $S$. It is not hard to get
\begin{equation}\label{eq:4.4}
	\deg_{\omega}(S)=\frac{1}{2\pi}\int_{X}\tr((\sqrt{-1}\Lambda_{\omega}F_{H}+[\phi,\phi^{*H}])\pi_{S}^{H})dv_{g}-\frac{1}{2\pi}\int_{X}(|\bar{\partial}_{A}\pi_{S}^{H}|^{2}+|[\phi,\pi_{S}^{H}]|^{2})dv_{g}.
\end{equation}
Let $\mathcal{G}$ be  the unitary gauge group of $E$,  and its Lie algebra is denoted by $\mathfrak{u}(E)$. The following proposition can be derived directly from equation (\ref{eq:4.4}).
\begin{prop}\label{prop4:5}
	Let $g_{j} \in \mathcal{G}^{\mathbb{C}}$ and $(A_{j},\phi_{j})= g_{j}\cdot(A_{0},\phi_{0})$ be a sequence of complex gauge equivalent $V$-twisted Higgs structure on complex vector bundle $E$ of rank $r$, $S$ be a $\phi_{0}$-invariant subsheaf of $(E,\bar{\partial}_{A_{0}},\phi_{0})$. Suppose $\sqrt{-1}\Lambda_{\omega}F_{A_{j}}+[\phi_{j},\phi_{j}^{*}]\rightarrow a$ in $L^{1}$ as $j\rightarrow+\infty$, where $a\in L^{1}(\sqrt{-1}\mathfrak{u}(E))$, and that the eigenvalues $\lambda_{1}\geq\cdots\geq\lambda_{r}$of $\frac{1}{2\pi}a$ are constant. Then $\deg_{\omega}(S)\leq\sum_{i\leq rank(S)}\lambda_{i}$.
\end{prop}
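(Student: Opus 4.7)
The strategy is the standard Donaldson--Uhlenbeck--Yau style degree estimate via the Chern--Weil identity (\ref{eq:4.4}) combined with a linear algebra bound on projections.

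First I would transport the subsheaf via the complex gauge transformations. Since $g_{j}\in\mathcal{G}^{\mathbb{C}}$ acts biholomorphically on $(E,\bar{\partial}_{A_{0}},\phi_{0})$ to give $(E,\bar{\partial}_{A_{j}},\phi_{j})$, the image $S_{j}=g_{j}(S)$ is a $\phi_{j}$-invariant coherent subsheaf isomorphic to $S$; in particular $\deg_{\omega}(S)=\deg_{\omega}(S_{j})$. Let $\pi_{j}=\pi_{S_{j}}^{H_{0}}\in L^{2}_{1}(\mathrm{End}(E))$ be the $H_{0}$-orthogonal projection onto the weak subbundle associated with $S_{j}$ by Uhlenbeck--Yau. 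Applying the identity (\ref{eq:4.4}) to the triple $(\bar{\partial}_{A_{j}},\phi_{j},H_{0})$ and discarding the manifestly nonpositive $L^{2}$-norm terms gives
\begin{equation*}
\deg_{\omega}(S)=\deg_{\omega}(S_{j})\leq\frac{1}{2\pi}\int_{X}\tr\!\bigl((\sqrt{-1}\Lambda_{\omega}F_{A_{j}}+[\phi_{j},\phi_{j}^{*H_{0}}])\,\pi_{j}\bigr)\,dv_{g}.
\end{equation*}

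Next I would pass to the limit $j\to\infty$. Write $\Psi_{j}=\sqrt{-1}\Lambda_{\omega}F_{A_{j}}+[\phi_{j},\phi_{j}^{*H_{0}}]$, so $\Psi_{j}\to a$ in $L^{1}$. Since $\pi_{j}$ is a self-adjoint projection we have the pointwise bound $|\pi_{j}|_{\mathrm{op}}\leq 1$, hence $|\tr(\pi_{j}(\Psi_{j}-a))|\leq r\|\Psi_{j}-a\|$, so this error term tends to zero in $L^{1}$. It remains to bound $\frac{1}{2\pi}\int_{X}\tr(\pi_{j}\,a)\,dv_{g}$ uniformly.

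For the main term I would use the minimax principle pointwise. At every point $x\in X$, $\pi_{j}(x)$ is a self-adjoint projection of trace $k:=\mathrm{rank}(S)$ on $E_{x}$, and $\tfrac{1}{2\pi}a(x)$ is Hermitian with ordered eigenvalues $\lambda_{1}\geq\cdots\geq\lambda_{r}$ (which are constant in $x$ by hypothesis). The standard fact that $\tr(\pi A)\leq \lambda_{1}(A)+\cdots+\lambda_{k}(A)$ for any rank-$k$ orthogonal projection $\pi$ and Hermitian $A$ gives
\begin{equation*}
\frac{1}{2\pi}\tr(\pi_{j}(x)\,a(x))\leq\lambda_{1}+\cdots+\lambda_{k}.
\end{equation*}
Integrating over $X$ and using the normalization $\mathrm{Vol}(X,\omega)=2\pi$, then letting $j\to\infty$, yields $\deg_{\omega}(S)\leq\sum_{i\leq k}\lambda_{i}$ as desired.

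The main technical wrinkle I expect is the justification that $\pi_{j}$, as the Uhlenbeck--Yau weak projection of a coherent (possibly non-locally-free) subsheaf, is integrable enough for the Chern--Weil identity (\ref{eq:4.4}) to hold with respect to the fixed metric $H_{0}$; this is precisely the content of (\ref{eq:4.4}) and the pointwise minimax inequality holds almost everywhere, which is enough after integration. Everything else is a clean combination of the $L^{1}$ convergence hypothesis, invariance of degree under holomorphic isomorphism, and elementary Hermitian linear algebra.
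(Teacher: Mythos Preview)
Your proposal is correct and is precisely the argument the paper intends: the authors state only that the proposition ``can be derived directly from equation (\ref{eq:4.4}),'' and your proof is exactly that derivation---transport $S$ by $g_{j}$, apply the Chern--Weil identity (\ref{eq:4.4}) for $(\bar{\partial}_{A_{j}},\phi_{j},H_{0})$, drop the nonpositive second--fundamental--form terms, pass to the $L^{1}$ limit using $\|\pi_{j}\|_{\mathrm{op}}\leq 1$, and finish with the pointwise minimax bound for rank-$k$ projections. Your remark that the Uhlenbeck--Yau weak projection has the required integrability and that the eigenvalue inequality holds almost everywhere is the only point needing care, and you flag it appropriately.
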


According to  (\ref{eq2:2}) in Lemma \ref{ll:1}, we have
\begin{equation}
	\begin{split}
		\sum_{i=1}^{r}\mu_{i}=\deg_{\omega}(E,\bar{\partial}_{A})=\deg_{\omega}(E_{\infty},\bar{\partial}_{A_{\infty}})=\sum_{i=1}^{r}\lambda_{i}.
	\end{split}
\end{equation}
Let $\{E_{i}\}_{i=1}^{l}$ be the HN filtration of $(E,\bar{\partial}_{A_{0}},\phi_{0})$. According to Corollary \ref{cor} and Proposition \ref{prop4:5}, we have
\begin{equation}
	\begin{split}
		\sum_{\alpha\leq rank(E_{i})}\mu_{\alpha}=\deg_{\omega}(E_{i})\leq \sum_{\alpha\leq rank(E_{i})}\lambda_{\alpha}.
	\end{split}
\end{equation}
That is,
\begin{equation}\label{e:HNT}
\vec{\mu}(E,\bar{\partial}_{A_{0}},\phi_{0})\leq \vec{\lambda}_{\infty}.
\end{equation}
\medskip

 Let $\mathfrak{u}(r)$ be the Lie algebra of unitary group $U(r)$. Fixing a real number $\alpha\geq 1$, for any $a\in\mathfrak{u}(r)$, we define the function $\varphi_{\alpha}:\mathfrak{u}(r) \to \mathbb{R}$ by
 \begin{equation*}
     \varphi_{\alpha}(a)=\sum_{j=1}^{r}|\lambda_{j}|^{\alpha},
 \end{equation*}
 where $\sqrt{-1}\lambda_j$ are the eigenvalues of $a$. It is easy to show that there is a family of smooth convex ad-invariant functions $\varphi_{\alpha,\rho}$, $0<\rho\leq1$, such that $\varphi_{\alpha,\rho}\rightarrow \varphi_{\alpha}$ uniformly on compact subsets of $\mathfrak{u}(r)$ as $\rho\to 0$. Therefore, from \cite[Prop. 12.16]{AB}, we know that $\varphi_{\alpha}$ is convex function. For any real number $N$, we define
\begin{equation}
	\HYM_{\alpha,N}(A,\phi)=\int_{X}\varphi_{\alpha}(\sqrt{-1}(\theta(A,\phi)+N\cdot \textmd{Id}_{E}))dv_{g}.
\end{equation}
For any $\vec{\mu}=(\mu_{1},\cdots,\mu_{r})$, set $\HYM_{\alpha,N}(\mu)=2\pi\varphi_{\alpha}(\sqrt{-1}(\vec{\mu}+N))$. Then for any smooth convex ad-invariant function $\varphi$, we have
\begin{equation}\label{eq}
	\bigg(\frac{\partial}{\partial t}-\Delta\bigg)\varphi(\sqrt{-1}(\theta(A(t),\phi(t))+N\cdot \textmd{Id}_{E}))\geq 0,
\end{equation}
whose proof can be found in \cite{DW,LZ}. Since we can approximate $\varphi_{\alpha}$ by smooth convex ad-invariant functions $\varphi_{\alpha,\rho}\rightarrow \varphi_{\alpha}$, by (\ref{eq}) we know that $\HYM_{\alpha,N}(A(t),\phi(t))$ is nonincreasing along the flow. By Corollary \ref{cor}, we can choose a sequence $t_{j}\rightarrow +\infty$, such that $\HYM_{\alpha,N}(A(t_{j}),\phi(t_{j}))\rightarrow \HYM_{\alpha,N} (A_{\infty},\phi_{\infty})$. Then we have
\begin{equation}\label{eq:j}
	\lim_{t\rightarrow +\infty}\HYM_{\alpha,N}(A(t),\phi(t))=\HYM_{\alpha,N}(A_{\infty},\phi_{\infty})
\end{equation}
for any $\alpha\geq 1$ and any $N$.

\begin{lem}[\cite{DW}] \label{l:DW1}
	The functional $a\rightarrow (\int_{X}\varphi_{\alpha}(a)dv_{g})^{1/\alpha}$ defines a norm on $L^{\alpha}(\mathfrak{u}(E))$ which is equivalent to the $L^{\alpha}$ norm.
\end{lem}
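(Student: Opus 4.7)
The plan is to reduce the statement to a pointwise fact about Hermitian matrices and then integrate. At each point $x\in X$, choose a local unitary frame so that fibers of $\mathfrak{u}(E)$ are identified with $\mathfrak{u}(r)$; the claim then splits into (a) showing that $\|a\|_{\alpha}:=\varphi_{\alpha}(a)^{1/\alpha}$ is a norm on the finite-dimensional vector space $\mathfrak{u}(r)$ that is equivalent to the Frobenius norm $|a|=(\mathrm{tr}(a^{*}a))^{1/2}$, and (b) lifting this pointwise statement to $L^{\alpha}$ via Minkowski's inequality.

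For step (a), observe that $\|\cdot\|_{\alpha}$ is positively homogeneous of degree one, since $\varphi_{\alpha}(ta)=|t|^{\alpha}\varphi_{\alpha}(a)$, and positive definite, since $\varphi_{\alpha}(a)=0$ forces all eigenvalues of $a$ to vanish, whence $a=0$. The nontrivial part is the triangle inequality. I would derive this from the convexity and ad-invariance of $\varphi_{\alpha}$ exactly as it is used in the paper: since $\varphi_{\alpha}$ is a convex ad-invariant function on $\mathfrak{u}(r)$, the function $\|\cdot\|_{\alpha}$ is nothing but the Schatten $\alpha$-norm on $\sqrt{-1}\mathfrak{u}(r)$, for which the triangle inequality is classical (alternatively, approximate $\varphi_{\alpha}$ by the smooth convex invariants $\varphi_{\alpha,\rho}$, use convexity to obtain $\varphi_{\alpha,\rho}(\tfrac{a+b}{2})^{1/\alpha}\le\tfrac{1}{2}(\varphi_{\alpha,\rho}(a)^{1/\alpha}+\varphi_{\alpha,\rho}(b)^{1/\alpha})$ after the standard trick, and pass to the limit $\rho\to 0$). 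Once $\|\cdot\|_{\alpha}$ is known to be a norm on the finite-dimensional space $\mathfrak{u}(r)$, the standard fact that any two norms on a finite-dimensional vector space are equivalent yields constants $c_{1},c_{2}>0$, depending only on $\alpha$ and $r$, such that
\begin{equation*}
c_{1}\,|a|\ \le\ \varphi_{\alpha}(a)^{1/\alpha}\ \le\ c_{2}\,|a|\qquad\text{for every }a\in\mathfrak{u}(r).
\end{equation*}

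For step (b), I would take $\alpha$-th powers of the pointwise inequality and integrate over $X$, obtaining
\begin{equation*}
c_{1}^{\alpha}\int_{X}|a|^{\alpha}\,dv_{g}\ \le\ \int_{X}\varphi_{\alpha}(a)\,dv_{g}\ \le\ c_{2}^{\alpha}\int_{X}|a|^{\alpha}\,dv_{g},
\end{equation*}
which is exactly the equivalence of $(\int_{X}\varphi_{\alpha}(a)\,dv_{g})^{1/\alpha}$ with the $L^{\alpha}$ norm. Positive homogeneity and positive definiteness of the functional on $L^{\alpha}(\mathfrak{u}(E))$ are inherited pointwise. For the triangle inequality on $L^{\alpha}$, combine the pointwise triangle inequality $\varphi_{\alpha}(a+b)^{1/\alpha}\le \varphi_{\alpha}(a)^{1/\alpha}+\varphi_{\alpha}(b)^{1/\alpha}$ from step (a) with Minkowski's inequality applied to the scalar functions $\varphi_{\alpha}(a)^{1/\alpha}$ and $\varphi_{\alpha}(b)^{1/\alpha}$ in $L^{\alpha}(X)$.

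The only real obstacle is the pointwise triangle inequality in step (a); everything else is bookkeeping. Since this is precisely the known fact that $\varphi_{\alpha}^{1/\alpha}$ is the Schatten $\alpha$-norm on anti-Hermitian matrices, I would either cite that result or invoke the convexity approximation by $\varphi_{\alpha,\rho}$ already built into the paper, together with ad-invariance and the spectral characterization of the eigenvalues. Once the pointwise norm property is established, the $L^{\alpha}$-level statement follows at once.
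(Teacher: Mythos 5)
The paper states this lemma without proof, citing it directly from Daskalopoulos--Wentworth, so there is no in-paper argument to compare against; your proof is correct and is the natural route (and, as far as I can tell, the same one used in \cite{DW}). You correctly reduce to the pointwise statement that $\varphi_{\alpha}^{1/\alpha}$ is a norm on $\mathfrak{u}(r)$ equivalent to the Frobenius norm, using ad-invariance to make the fibrewise identification of $\mathfrak{u}(E)_{x}$ with $\mathfrak{u}(r)$ well-posed and noting that the singular values of an anti-Hermitian matrix are exactly $|\lambda_{j}|$, so $\varphi_{\alpha}^{1/\alpha}$ is the Schatten $\alpha$-norm and the triangle inequality is classical (the $\varphi_{\alpha,\rho}$-approximation is a valid alternative route), after which integrating the pointwise two-sided bound and applying Minkowski's inequality completes the lifting to $L^{\alpha}$.
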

\begin{lem}[\cite{DW}] \label{l:DW2}
\begin{itemize}
		\item[(1)] If $\vec{\mu}\leq\vec{\lambda}$, then $\varphi_{\alpha}(\sqrt{-1}\vec{\mu})\leq\varphi_{\alpha}(\sqrt{-1}\vec{\lambda})$ for all $\alpha\geq 1$.
		\item[(2)] Assume $\mu_{r}\geq 0$ and $\lambda_{r}\geq 0$. If $\varphi_{\alpha}(\sqrt{-1}\vec{\mu})=\varphi_{\alpha}(\sqrt{-1}\vec{\lambda})$ for all $\alpha$ in some set $S\subset[1,+\infty)$ possessing a limit point, then $\vec{\mu}=\vec{\lambda}$.
	\end{itemize}
\end{lem}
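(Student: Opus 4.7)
\medskip

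For part (i), the plan is to recognize the hypothesis $\vec{\mu}\leq\vec{\lambda}$—which in context (Proposition \ref{prop4:5} together with the trace identity (\ref{eq2:2})) means $\sum_{i\leq k}\mu_i\leq\sum_{i\leq k}\lambda_i$ for every $k$, with equality at $k=r$—as the classical Schur majorization of the nonincreasing tuple $\vec{\mu}$ by $\vec{\lambda}$. Since $x\mapsto|x|^{\alpha}$ is convex on $\mathbb{R}$ for $\alpha\geq 1$, the Hardy--Littlewood--P\'olya inequality immediately gives $\sum_i|\mu_i|^{\alpha}\leq\sum_i|\lambda_i|^{\alpha}$, which is the desired conclusion. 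A direct verification proceeds by Abel summation: writing $M_k=\sum_{i\leq k}\mu_i$ and $L_k=\sum_{i\leq k}\lambda_i$, the difference $\sum_i\psi(\mu_i)-\sum_i\psi(\lambda_i)$ telescopes into a sum of terms of the form $(M_k-L_k)\cdot(\psi'(\xi_k^\mu)-\psi'(\xi_k^\lambda))$, each of which is nonpositive by the combination of the assumption $M_k\leq L_k$ and the monotonicity of $\psi'$ that comes from convexity.

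For part (ii), the assumption $\mu_r,\lambda_r\geq 0$ removes absolute values, so setting
\[
F(\alpha):=\sum_{i=1}^{r}\mu_i^{\alpha},\qquad G(\alpha):=\sum_{i=1}^{r}\lambda_i^{\alpha}
\]
(with the convention $0^\alpha=0$ for $\alpha>0$) exhibits $F$ and $G$ as finite sums of real exponentials $e^{\alpha\log\mu_i}$, $e^{\alpha\log\lambda_j}$ over the positive entries, and hence as real-analytic functions of $\alpha$ on $(0,\infty)$. The first step is to upgrade the pointwise equality $F|_S=G|_S$ to the global identity $F\equiv G$ on $(0,\infty)$: since $S$ possesses a limit point and both functions are real-analytic, this follows from the identity theorem. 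The second step is to read off $\vec{\mu}=\vec{\lambda}$ from $F\equiv G$. Combining like terms writes $F-G=\sum_c n_c e^{\alpha c}$ with integer coefficients $n_c$ indexed by the distinct real numbers $c\in\{\log\mu_i:\mu_i>0\}\cup\{\log\lambda_j:\lambda_j>0\}$. Linear independence of distinct real exponentials on any open interval then forces every $n_c=0$, so the positive entries of $\vec{\mu}$ and $\vec{\lambda}$ coincide as multisets; since both tuples have the same length $r$, the counts of trailing zeros also match, and because both are arranged in nonincreasing order the entries coincide term by term, giving $\vec{\mu}=\vec{\lambda}$.

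I expect the main technical point to be part (ii), specifically the two-step argument: analytic continuation from the thin set $S$ to the full half-line, followed by linear independence of exponentials. Both ingredients are indispensable—the accumulation point in $S$ is what allows the identity theorem to apply, and the nonnegativity hypothesis is what ensures $\alpha\mapsto c^\alpha$ is real-analytic on $(0,\infty)$ in the first place (without it, $|c|^\alpha$ is not a sum of exponentials for $c<0$ when $\alpha\notin\mathbb{N}$). Part (i), by contrast, reduces cleanly to standard majorization theory and requires no delicate analysis beyond the convexity of $x\mapsto|x|^{\alpha}$.
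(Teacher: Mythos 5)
Your proposal is correct, and since the paper itself simply cites the lemma from Daskalopoulos--Wentworth without supplying a proof, the comparison is against that expected argument rather than against anything written out here. Your route for part (i) — reading $\vec{\mu}\leq\vec{\lambda}$ as the Atiyah--Bott majorization order (partial sums dominated, total sums equal), and then invoking Karamata/Hardy--Littlewood--P\'olya with the convex function $x\mapsto|x|^\alpha$ — is exactly the right reduction and is what [DW] use. Your route for part (ii) — using $\mu_r,\lambda_r\geq0$ to write $\varphi_\alpha(\sqrt{-1}\vec\mu)=\sum_i\mu_i^\alpha$ as a finite sum of exponentials in $\alpha$, extending the equality from $S$ to $(0,\infty)$ by real analyticity and the identity theorem, and then concluding by linear independence of distinct exponentials — is also correct and is the standard argument; an equivalent finish is to read off all integer power sums from $F\equiv G$ and apply Newton's identities, but the exponential-independence argument is cleaner. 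You also correctly identify why the two hypotheses are each indispensable.

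One small caveat worth noting: your parenthetical Abel-summation sketch for part (i) is not quite phrased in the usual way. The standard telescoping proof of Karamata introduces the secant slopes $c_i=\bigl(\psi(\mu_i)-\psi(\lambda_i)\bigr)/(\mu_i-\lambda_i)$, observes that convexity of $\psi$ together with the monotonicity of both tuples makes $(c_i)$ nonincreasing, and then writes $\sum_i c_i(\mu_i-\lambda_i)=c_r(M_r-L_r)+\sum_{k<r}(c_k-c_{k+1})(M_k-L_k)$, whose terms are all $\leq 0$. Your expression with two mean-value points $\xi_k^\mu$, $\xi_k^\lambda$ per index does not obviously telescope to that, and the monotonicity of the intermediate points is not automatic. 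Since you rest the argument on the HLP inequality itself, this does not affect correctness, but if you intend the Abel computation as a self-contained verification it should be reworked along the lines above.
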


Let  $\{E_{i}\}$ be the HN filtration of $E$. Given a Hermitian metric $H$ on $E$, we can  define an $L_{1}^{2}$-Hermitian endomorphism
\begin{equation*}
    \Psi^{HN}(E,\phi,H)=\sum_{i=1}^{l}\mu_{i}(\pi_{i}^{H}-\pi_{i-1}^{H}),
\end{equation*}
 where $\pi_{i}^{H}$ is the projection map of $E$ to $E_{i}$. More generally, given a  filtration $F=\{F_{i}\}$ of $E$ and real numbers $\{\mu_{i}\}_{i=1}^{l}$, we can also define an $L_{1}^{2}$-Hermitian endomorphism  $\Psi(F,(\mu_{1},\cdots,\mu_{l}),H)=\sum_{i=1}^{l}\mu_{i}(\pi_{i}^{H}-\pi_{i-1}^{H})$.
\begin{defn}
	Fix $0<p<+\infty$ and $\delta>0$. An $L^{p}$-$\delta$-approximate critical Hermitian metric on a $V$-twisted Higgs bundle $(E,\phi)$ is a smooth metric $H$ such that
	\begin{equation}
		\|\sqrt{-1}\Lambda_{\omega}F_{A_{H}}+[\phi,\phi^{*H}]-\Psi^{HN}(E,\phi,H)\|_{L^{p}}\leq\delta,
	\end{equation}
	where $A_{H}$ is the Chern connection determined by $(\bar{\partial}_{E},H)$.
\end{defn}

Let $\{E_{i,j}\}$ be the HNS filtration of the $V$-twisted Higgs bundle $(E,\bar{\partial}_{A},\phi)$. Set
\begin{equation}
	\Sigma_{alg}=\cup_{i,j}\{Sing(E_{i,j})\cup Sing(Q_{i,j})\}.
\end{equation}
 It is well known that $\Sigma_{alg}$ is a complex analytic subset of complex codimension at least two. We call it the singular set of the HNS filtration. Since the HNS filtration fails to be given by subbundles on the singular set $\Sigma_{alg}$, it makes difficult to do analysis.

When $\Sigma_{alg}=\emptyset$, it is easy to show the existence of $L^{\infty}$-$\delta$-approximate critical Hermitian metric for any $\delta>0$. In general case, Sibley use Hironaka's desingularisation theorem (\cite{Hir1,Hir2}) to resolve the singularities $\Sigma_{alg}$ and obtain a filtration by subbundles.

\begin{prop}[\cite{Si}]
	Let $0=E_{0}\subset E_{1}\subset\cdots\subset E_{l-1}\subset E_{l}=E$ be a filtration of a holomorphic vector bundle $E$ on a complex manifold $X$ by saturated subsheaves and let $Q_{i}=E_{i}/E_{i-1}$. Then there is a finite sequence of blow-ups along complex submanifolds of $X$ whose composition $\pi:\tilde{X}\rightarrow X$ enjoys the following properties. There is a filtration
	\begin{equation}
		0=\tilde{E}_{0}\subset\tilde{E}_{1}\subset\cdots\subset\tilde{E}_{l-1}\subset \tilde{E}_{l}=\tilde{E}
	\end{equation}
	by subbundles such that $\tilde{E}_{i}$ is the saturation of $\pi^{*}E_{i}$. If $\tilde{Q}_{i}=\tilde{E}_{i}/\tilde{E}_{i-1}$, then we have exact sequences:
	\begin{equation}
		0\rightarrow E_{i}\rightarrow \pi_{*}\tilde{E}_{i}\rightarrow T_{i}\rightarrow 0
	\end{equation}
	and
	\begin{equation}
		0 \rightarrow Q_{i}\rightarrow \pi_{*}\tilde{Q}_{i}\rightarrow T_{i}^{'}\rightarrow 0,
	\end{equation}
	where $T_{i}$ and $T_{i}^{'}$ are torsion sheaves supported on the singular sets of $E_{i}$ and $Q_{i}$, respectively, and furthermore $\pi_{*}\tilde{E}_{i}=E_{i}$ and $Q_{i}^{**}=(\pi_{*}\tilde{Q}_{i})^{**}$.
\end{prop}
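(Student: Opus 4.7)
The plan is to build $\pi$ as a composition of blow-ups along smooth centers drawn from the singular loci $\Sigma_i := \mathrm{Sing}(E_i)$ and $\Sigma_i' := \mathrm{Sing}(Q_i)$, by appealing to Hironaka's functorial desingularization, and then to verify the sheaf-theoretic claims via Hartogs-type extension for reflexive sheaves. To set up, I would first observe that each $E_i$, being a saturated subsheaf of the locally free sheaf $E$ on the smooth complex manifold $X$, is automatically reflexive, so $\Sigma_i$ has complex codimension at least two; the analogous statement holds for each $Q_i$. Over the open set $U = X \setminus \bigcup_i \Sigma_i$ the filtration is by honest subbundles, so it determines a classifying section into a flag bundle $\mathrm{Fl}(r_1, \ldots, r_{l-1}; E) \to X$, where $r_i = \mathrm{rank}(E_i)$.

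To construct $\pi$ I would then take the closure of the graph of this classifying section inside $X \times_X \mathrm{Fl}(r_1, \ldots, r_{l-1}; E)$, obtaining a proper bimeromorphic cover of $X$ that is possibly singular. Applying Hironaka's theorem one resolves this cover by a sequence of blow-ups along smooth centers contained in $\bigcup_i \Sigma_i$, producing $\pi : \tilde X \to X$ that is an isomorphism outside $\bigcup_i \Sigma_i$ and such that the lifted classifying map yields a filtration of $\tilde E = \pi^{*} E$ by subbundles $\tilde E_i$, with each $\tilde E_i$ equal to the saturation of $\pi^{*} E_i$ in $\tilde E$. Setting $\tilde Q_i = \tilde E_i / \tilde E_{i-1}$ then gives a filtration by subbundles whose quotients are also locally free.

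To identify the direct images, I would use that there is a natural inclusion $E_i \hookrightarrow \pi_{*}\tilde E_i$ coming from the identification over $\pi^{-1}(U)$, whose cokernel $T_i$ is supported in $\Sigma_i$ and therefore torsion. Since $E_i$ is reflexive and $\pi_{*}\tilde E_i$ is torsion-free, and they agree on the complement of a codimension-two set, Hartogs extension for reflexive sheaves forces $T_i = 0$, giving $\pi_{*}\tilde E_i = E_i$. For the quotient filtration, applying $\pi_{*}$ to the short exact sequence $0 \to \tilde E_{i-1} \to \tilde E_i \to \tilde Q_i \to 0$ together with $\pi_{*}\tilde E_{i-1} = E_{i-1}$ produces an injection $Q_i \hookrightarrow \pi_{*}\tilde Q_i$ with torsion cokernel $T_i'$ supported on $\Sigma_i'$. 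Moreover $\pi_{*}\tilde Q_i$ is reflexive — being the direct image of a locally free sheaf under a proper bimeromorphic map that is an isomorphism in codimension one — so $Q_i^{**}$ and $(\pi_{*}\tilde Q_i)^{**} = \pi_{*}\tilde Q_i$ are two reflexive extensions of the same locally free sheaf across a codimension-two subset, and must therefore coincide.

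The main obstacle is constructing a single $\pi$ that simultaneously turns every $\pi^{*}E_i$ into a subbundle while confining the exceptional locus to $\bigcup_j \Sigma_j$; this forces a careful use of the strong functorial form of Hironaka's desingularization applied to the flag-bundle construction above, rather than a naive iterated blow-up of individual singular sets, which could otherwise create new centers outside $\bigcup_j \Sigma_j$ at later stages. A secondary technical point is establishing reflexivity of $\pi_{*}\tilde E_i$ and $\pi_{*}\tilde Q_i$, for which I would rely on the classical criterion that direct images of locally free sheaves under proper bimeromorphic morphisms of smooth manifolds are torsion-free and normal in codimension one, hence reflexive.
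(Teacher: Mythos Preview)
The paper does not prove this proposition at all; it is quoted verbatim from Sibley's paper \cite{Si}, so there is no ``paper's own proof'' to compare against. Your outline is in the spirit of Sibley's argument (resolve the rational flag map coming from the generic filtration, then read off the sheaf identities), but two points need correction.

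First, your claim that $\pi_{*}\tilde Q_{i}$ is reflexive is false in general. Already for the blow-up $\pi:\tilde X\to X$ of a point $p\in X$ with exceptional divisor $D$, the sheaf $\mathcal{O}_{\tilde X}(-D)$ is locally free but $\pi_{*}\mathcal{O}_{\tilde X}(-D)=\mathfrak m_{p}$, which is not reflexive. Since $\tilde Q_{i}=\tilde E_{i}/\tilde E_{i-1}$ with $\tilde E_{i}$ the \emph{saturation} of $\pi^{*}E_{i}$, twists by $\mathcal{O}(-kD)$ can and do appear, so your ``direct image of locally free under a bimeromorphic map is reflexive'' criterion is simply wrong. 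Fortunately the proposition only asserts $Q_{i}^{**}=(\pi_{*}\tilde Q_{i})^{**}$, and for this you only need that $Q_{i}$ and $\pi_{*}\tilde Q_{i}$ are torsion-free and agree off a codimension-two set, so that their reflexive hulls coincide; drop the reflexivity claim and argue this way instead.

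Second, your Hartogs argument for $T_{i}=0$ is stated backwards. What makes it work is not an abstract ``reflexive versus torsion-free'' principle but the concrete chain of inclusions
\[
E_{i}\ \subset\ \pi_{*}\tilde E_{i}\ \subset\ \pi_{*}\tilde E\ =\ \pi_{*}\pi^{*}E\ =\ E,
\]
the last equality by the projection formula and $\pi_{*}\mathcal{O}_{\tilde X}=\mathcal{O}_{X}$. Then $\pi_{*}\tilde E_{i}/E_{i}$ is a torsion subsheaf of the torsion-free sheaf $E/E_{i}$ (here you use that $E_{i}$ is saturated), hence zero. This is the clean way to get $\pi_{*}\tilde E_{i}=E_{i}$; your version implicitly needs $\pi_{*}\tilde E_{i}$ to sit inside something reflexive, which is exactly what this chain provides.
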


Let $\phi\in \Gamma(\mbox{End}(E)\otimes V)$ be a twisted Higgs field on holomorphic bundle $(E,\bar{\partial}_{A})$, $\tilde{V}=\pi^{*}V$ and $\tilde{\phi}=\pi^{*}\phi\in \Gamma(\mbox{End}(\tilde{E})\otimes \tilde{V})$ be the pullback twisted Higgs field on $\tilde{E}$. If the filtration $\{E_{i}\}_{i=1}^{l}$ is by $\phi$-invariant subsheaves, then the filtration $\{\tilde{E}_{i}\}_{i=1}^{l}$ in the above proposition is by $\tilde{\phi}$-invariant subbundles. So, we have the following proposition.
\begin{prop}\label{pp:1}
	Let $\{E_{i,j}\}$ be the HNS filtration of a $V$-twisted Higgs bundle $(E,\bar{\partial}_{A},\phi)$ on complex manifold $X$ and let $Q_{i,j}=E_{i,j}/E_{i,j-1}$. Then there is a finite sequence of blow-ups along complex submanifolds of $X$ whose composition $\pi:\tilde{X}\rightarrow X$ enjoys the following properties. There is a filtration $\{\tilde{E}_{i,j}\}$ by $\tilde{\phi}$-subbundles such that $\tilde{E}_{i,j}$ is the saturation of $\pi^{*}E_{i,j}$, $\pi_{*}\tilde{E}_{i,j}=E_{i,j}$ and $Q_{i,j}^{**}=(\pi_{*}\tilde{Q}_{i,j})^{**}$, where $\tilde{\phi}=\pi^{*}\phi$.
\end{prop}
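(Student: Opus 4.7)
My plan is to reduce Proposition \ref{pp:1} to the preceding proposition of Sibley by checking that the $\phi$-invariance on $X$ automatically upgrades to $\tilde\phi$-invariance on the resolution $\tilde X$. The sheaf-theoretic content — the existence of $\pi\colon \tilde X\to X$, a filtration $\{\tilde E_{i,j}\}$ by \emph{subbundles} of $\tilde E=\pi^{*}E$ with $\tilde E_{i,j}$ the saturation of $\pi^{*}E_{i,j}$, the equality $\pi_{*}\tilde E_{i,j}=E_{i,j}$, and the identification $Q_{i,j}^{**}=(\pi_{*}\tilde Q_{i,j})^{**}$ — follows directly from applying the previous proposition to the double filtration $\{E_{i,j}\}$ of $E$, viewed purely as a filtration by saturated subsheaves (forgetting $\phi$). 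Hence the only new content is to verify that each $\tilde E_{i,j}$ is preserved by $\tilde\phi=\pi^{*}\phi$.

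To verify $\tilde\phi$-invariance, I would argue as follows. Since $\phi(E_{i,j})\subset E_{i,j}\otimes V$, pulling back gives $\tilde\phi(\pi^{*}E_{i,j})\subset \pi^{*}E_{i,j}\otimes \tilde V\subset \tilde E_{i,j}\otimes \tilde V$. Now consider the composition
\begin{equation*}
f\colon \tilde E_{i,j}\hookrightarrow \tilde E \xrightarrow{\tilde\phi} \tilde E\otimes \tilde V \twoheadrightarrow (\tilde E/\tilde E_{i,j})\otimes \tilde V.
\end{equation*}
The restriction $f|_{\pi^{*}E_{i,j}}$ vanishes by the previous observation, so $f$ factors through the quotient $\tilde E_{i,j}/\pi^{*}E_{i,j}$. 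Because $\tilde E_{i,j}$ is by construction the saturation of $\pi^{*}E_{i,j}$ inside $\tilde E$, this quotient is a torsion sheaf, supported on the exceptional locus of $\pi$. On the other hand the target $(\tilde E/\tilde E_{i,j})\otimes \tilde V$ is locally free (since $\tilde E_{i,j}$ is a subbundle, the quotient $\tilde E/\tilde E_{i,j}$ is locally free, and $\tilde V$ is locally free), hence torsion-free. Any morphism from a torsion sheaf to a torsion-free sheaf vanishes, so $f=0$, equivalently $\tilde\phi(\tilde E_{i,j})\subset \tilde E_{i,j}\otimes \tilde V$.

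This establishes $\tilde\phi$-invariance of each step of the filtration. Since the identity $\phi\wedge\phi=0$ pulls back to $\tilde\phi\wedge\tilde\phi=0$, the pair $(\tilde E,\tilde\phi)$ is a $\tilde V$-twisted Higgs bundle on $\tilde X$, and $\{\tilde E_{i,j}\}$ is a filtration by $\tilde\phi$-invariant subbundles; together with the sheaf-theoretic properties inherited from Sibley's proposition this completes the proof. I do not anticipate a significant obstacle: the only real new ingredient beyond Sibley's resolution result is the standard principle that morphisms from torsion to torsion-free sheaves vanish, which is precisely what propagates $\phi$-invariance through the saturation operation. The one mildly delicate point is to confirm that Sibley's construction outputs $\tilde E_{i,j}$ as the saturation of $\pi^{*}E_{i,j}$ inside $\pi^{*}E$ itself (rather than inside some auxiliary ambient sheaf), but this is exactly the content of the previous proposition.
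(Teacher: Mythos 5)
Your proposal follows the same route as the paper: apply Sibley's preceding proposition to the flattened HNS filtration viewed purely as a chain of saturated subsheaves, and then observe that $\phi$-invariance of $E_{i,j}$ propagates to $\tilde\phi$-invariance of the saturations $\tilde E_{i,j}$. The paper simply asserts this last step, whereas you supply the justification (a map from the torsion sheaf $\tilde E_{i,j}/\pi^{*}E_{i,j}$ into the torsion-free sheaf $(\tilde E/\tilde E_{i,j})\otimes\tilde V$ vanishes), which is the correct and standard way to fill in the detail the authors omit.
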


Since the blow-up $\tilde{X}$ is also K\"ahler, we have a family of K\"ahler metrics given by $\omega_{\epsilon}=\pi^{*}\omega+\epsilon\eta$ on it, where $\eta$ is a certain K\"ahler metric.

\begin{thm}[\cite{Si}]
	Let $\tilde{S}$ be a subsheaf (with torsion free quotient $\tilde{Q}$) of a holomorphic vector bundle $\tilde{E}$ on
	$\tilde{X}$, where $\pi:\tilde{X}\rightarrow X$ is given by a sequence of blow-ups along complex submanifolds of codim $\geq 2$. Then there is a uniform constant $C$ independent of $\tilde{S}$ such that the degrees of $\tilde{S}$ and $\tilde{Q}$ with respect to $\omega_{\epsilon}$ satisfy:
	\begin{equation}
		\deg_{\omega_{\epsilon}}(\tilde{S})\leq \deg_{\omega}(\pi_{*} \tilde{S})+\epsilon C,\ \ \text{and}\ \  \deg_{\omega_{\epsilon}}(\tilde{Q})\geq \deg_{\omega}(\pi_{*}\tilde{Q})-\epsilon C.
	\end{equation}
\end{thm}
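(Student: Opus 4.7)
The plan is to expand $\omega_\epsilon^{n-1}$ in powers of $\epsilon$ and compare to the degree computed with $\pi^*\omega$. Write $\omega_\epsilon^{n-1} = (\pi^*\omega)^{n-1} + \epsilon\,\Omega_\epsilon$, where
\[
\Omega_\epsilon = \sum_{k=1}^{n-1}\binom{n-1}{k}\epsilon^{k-1}(\pi^*\omega)^{n-1-k}\wedge\eta^k
\]
is a strongly positive smooth $(n-1,n-1)$-form whose pointwise norm is bounded independently of $\epsilon\in[0,1]$. This yields
\[
\deg_{\omega_\epsilon}(\tilde S) = \deg_{\pi^*\omega}(\tilde S) + \frac{\epsilon}{(n-1)!}\int_{\tilde X} c_1(\tilde S)\wedge \Omega_\epsilon.
\]
Since $\pi$ is a composition of blow-ups along submanifolds of complex codimension $\geq 2$, it is a biholomorphism off an analytic set of codimension $\geq 2$, and the projection formula gives $\int_{\tilde X}c_1(\tilde S)\wedge(\pi^*\omega)^{n-1} = \int_X c_1(\pi_*\tilde S)\wedge \omega^{n-1}$, so $\deg_{\pi^*\omega}(\tilde S) = \deg_\omega(\pi_*\tilde S)$. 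The problem thus reduces to bounding the remaining integral $\int_{\tilde X} c_1(\tilde S)\wedge \Omega_\epsilon$ by a constant independent of $\tilde S$.

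For this uniform bound I would fix a smooth Hermitian metric $h$ on $\tilde E$ and let $\pi_{\tilde S}$ denote the orthogonal projection onto $\tilde S$, which is smooth off an analytic set of codimension $\geq 2$ because $\tilde E/\tilde S$ is torsion-free. The Gauss-type decomposition of curvatures reads $\sqrt{-1}\tr F_{h|_{\tilde S}} = \sqrt{-1}\tr(\pi_{\tilde S}F_h) + \sqrt{-1}\tr(A^*\wedge A)$, where $A$ is the second fundamental form; the last term is pointwise negative semidefinite as a real $(1,1)$-form, since the coefficient matrix $(\tr(A_i^*A_j))$ is positive semidefinite. Hence on the smooth locus $c_1(\tilde S)\leq \frac{\sqrt{-1}}{2\pi}\tr(\pi_{\tilde S}F_h)$ as real $(1,1)$-forms, and wedging with the strongly positive form $\Omega_\epsilon$ preserves the inequality. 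Integrating and using $|\pi_{\tilde S}|\leq 1$ together with the boundedness of $F_h$ gives $\int_{\tilde X}c_1(\tilde S)\wedge\Omega_\epsilon \leq C$ with $C$ independent of $\tilde S$, which yields the first inequality. The quotient statement follows dually: the second fundamental form for $\tilde Q = \tilde E/\tilde S$ enters with the opposite sign, producing $c_1(\tilde Q)\geq \frac{\sqrt{-1}}{2\pi}\tr((I-\pi_{\tilde S})F_h)$ and the corresponding lower bound, combined with the analogous identification $\deg_{\pi^*\omega}(\tilde Q) = \deg_\omega(\pi_*\tilde Q)$.

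The main obstacle to making this rigorous is extending the pointwise Chern-Weil inequality across the singular set of $\tilde S$. A priori the current $c_1(\tilde S)$ could carry positive distributional mass on a codimension $\geq 2$ analytic set, which would invalidate the inequality after integration. The way out is that $\tilde S$ is saturated, so $\det\tilde S$ extends as a holomorphic line bundle on all of $\tilde X$ and $c_1(\tilde S) = c_1(\det\tilde S)$ is a smooth cohomology class; combined with the standard Bando--Siu / Simpson fact that the second fundamental form $A=\bar\partial\pi_{\tilde S}$ lies in $L^{2}_{\mathrm{loc}}$ for a saturated subsheaf, the pointwise inequality extends to an inequality of currents on $\tilde X$ with no spurious singular contributions. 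With this technical step in hand, the argument proceeds cleanly as outlined.
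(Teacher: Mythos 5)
The paper does not contain its own proof of this theorem --- it is quoted directly from Sibley's article \cite{Si}, so there is no internal argument to compare against. Your proposal is a sound reconstruction of Sibley's line of reasoning: the decomposition $\omega_\epsilon^{n-1}=(\pi^*\omega)^{n-1}+\epsilon\,\Omega_\epsilon$ with $\Omega_\epsilon$ a strongly positive $(n-1,n-1)$-form, the observation that $\deg_{\pi^*\omega}(\tilde S)=\deg_\omega(\pi_*\tilde S)$ because exceptional divisors have vanishing $(\pi^*\omega)^{n-1}$-volume, and the uniform Chern--Weil bound on $\int c_1(\tilde S)\wedge\Omega_\epsilon$ via the sign of the second fundamental form are exactly the ingredients Sibley uses, and you correctly identify (and correctly discharge, via the Bando--Siu $L^2$ estimate for $\bar\partial\pi_{\tilde S}$ of a saturated subsheaf) the one real technical point about extending the pointwise inequality across the singular set.
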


\begin{cor}
	Let $(\tilde{E},\tilde{\phi})$ be a $\tilde{V}$-twisted Higgs bundle over $\tilde{X}$, $(E,\phi)$ be a $V$-twisted Higgs bundle on $X$ satisfy $E=\pi_{*}\tilde{E}$ and $\tilde{\phi}=\pi^{*}\phi$. If $(E,\phi)$ is $\omega$-stable, then there is an $\epsilon_{1}$ such that  $(\tilde{E},\tilde{\phi})$ is $\omega_{\epsilon}$-stable for all $0<\epsilon\leq \epsilon_{1}$. Let $\vec{\mu}_{\epsilon}(\tilde{E},\tilde{\phi})$ be the HN type of $(\tilde{E},\tilde{\phi})$ with respect to $\omega_{\epsilon}$, then $\vec{\mu}_{\epsilon}(\tilde{E},\tilde{\phi})\rightarrow\vec{\mu}(E,\phi)$ as $\epsilon\rightarrow 0$.
\end{cor}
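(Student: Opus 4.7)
The plan is to establish $\omega_{\epsilon}$-stability of $(\tilde{E},\tilde{\phi})$ for sufficiently small $\epsilon$ and then read off the convergence of HN types as a corollary. First I would compute how the slope of $\tilde{E}$ behaves: expanding $\omega_{\epsilon}^{n-1} = (\pi^{*}\omega + \epsilon\eta)^{n-1}$ in powers of $\epsilon$ and applying the projection formula — using $\pi_{*}\tilde{E} = E$, together with the birationality of $\pi$ to identify $\pi_{*}c_{1}(\tilde{E}) = c_{1}(E)$ and $\mathrm{rank}(\tilde{E}) = \mathrm{rank}(E)$ — yields $\mu_{\omega_{\epsilon}}(\tilde{E}) = \mu_{\omega}(E) + O(\epsilon)$.

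To prove stability I would argue by contradiction. Suppose there exist $\epsilon_{k} \downarrow 0$ and proper $\tilde{\phi}$-invariant saturated subsheaves $\tilde{S}_{k} \subsetneq \tilde{E}$ with $\mu_{\omega_{\epsilon_{k}}}(\tilde{S}_{k}) \geq \mu_{\omega_{\epsilon_{k}}}(\tilde{E})$. Since $\pi$ is birational, $\pi_{*}\tilde{S}_{k}$ is a $\phi$-invariant subsheaf of $E$ of rank equal to $\mathrm{rank}(\tilde{S}_{k})$, and its saturation in $E$ is a proper $\phi$-invariant saturated subsheaf. Applying the degree inequality from the preceding theorem gives
\[
\mu_{\omega_{\epsilon_{k}}}(\tilde{S}_{k}) \leq \mu_{\omega}(\pi_{*}\tilde{S}_{k}) + \epsilon_{k} C,
\]
so combining with the first step I obtain $\mu_{\omega}(\pi_{*}\tilde{S}_{k}) \geq \mu_{\omega}(E) - O(\epsilon_{k})$.

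The main obstacle is extracting from this a contradiction with $\omega$-stability of $(E,\phi)$, since a priori one only knows strict slope inequality for each fixed subsheaf. The key input is a uniform slope gap: Grothendieck's boundedness theorem for saturated subsheaves of bounded rank (adapted to the $\phi$-invariant setting, as in the HN filtration lemma recalled above) ensures that the supremum of $\mu_{\omega}(S)$ over all proper $\phi$-invariant saturated subsheaves $S \subsetneq E$ is attained, and by stability is strictly less than $\mu_{\omega}(E)$. Hence there exists $\delta > 0$ with $\mu_{\omega}(S) \leq \mu_{\omega}(E) - \delta$ for every such $S$, and choosing $\epsilon_{k}$ small enough that $O(\epsilon_{k}) < \delta/2$ gives the contradiction. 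This boundedness/gap step is where essentially all the content sits; the rest of the argument is bookkeeping.

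Once $\omega_{\epsilon}$-stability is established for $\epsilon \leq \epsilon_{1}$, the HN type of $(\tilde{E},\tilde{\phi})$ with respect to $\omega_{\epsilon}$ is the constant tuple $(\mu_{\omega_{\epsilon}}(\tilde{E}),\ldots,\mu_{\omega_{\epsilon}}(\tilde{E}))$, and by stability of $(E,\phi)$ we also have $\vec{\mu}(E,\phi) = (\mu_{\omega}(E),\ldots,\mu_{\omega}(E))$. The slope convergence from the first step then immediately yields $\vec{\mu}_{\epsilon}(\tilde{E},\tilde{\phi}) \to \vec{\mu}(E,\phi)$ as $\epsilon \to 0$.
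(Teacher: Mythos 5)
The paper states this corollary without proof, citing it as a consequence of Sibley's degree comparison theorem; there is therefore no ``paper proof'' to compare against. Your argument is correct and follows the natural route: a contradiction scheme combining Sibley's uniform degree inequality with a slope gap for the stable sheaf $(E,\phi)$.

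Two points deserve a bit more care than you give them. First, the identity $\mu_{\omega_{\epsilon}}(\tilde{E})=\mu_{\omega}(E)+O(\epsilon)$ requires $\pi_{*}c_{1}(\tilde{E})=c_{1}(E)$; this holds because $c_{1}(\tilde{E})-\pi^{*}c_{1}(E)$ is supported on the exceptional divisor, whose image under $\pi$ has complex codimension at least two in $X$, so its pushforward vanishes in degree $2$. You gesture at this but do not say it. Second, and more substantively, the ``uniform slope gap'' you invoke is exactly the content of the existence of a maximal destabilizing $\phi$-invariant subsheaf over a compact K\"{a}hler base. Over a projective base with rational polarization this follows from discreteness of degrees plus Grothendieck boundedness, but over a general K\"{a}hler manifold one has to appeal to the analytic version (as in Kobayashi's treatment of the HN filtration, which the paper cites); you should name that reference rather than simply saying ``Grothendieck's boundedness theorem,'' which by itself only gives boundedness above, not attainment of the supremum. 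Beyond these two clarifications your proof is complete: the pushforward $\pi_{*}\tilde{S}_{k}$ is $\phi$-invariant by the projection formula applied to $\tilde{V}=\pi^{*}V$, saturating it only increases degree, and once $\omega_{\epsilon}$-stability is in hand for small $\epsilon$, the HN types on both sides are the constant tuples, so the convergence of slopes does yield the convergence of HN types.
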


\begin{lem}[\cite{Si}] \label{l:1}
	Let $(X, \omega)$ be a compact K\"ahler manifold of complex dimension $n$, and $\pi:\tilde{X}\rightarrow X$ be a blow-up along a smooth complex submanifold $\Sigma$ of complex codimension $k\geq2$. Let $\eta$ be a K\"ahler metric on $X$, and consider the family of K\"ahler metrics $\omega_{\epsilon}=\pi^{*}\omega+\epsilon\eta$, $0<\epsilon<\epsilon_{1}$. Then for any $\alpha$ and $\tilde{\alpha}$ such that $1<\alpha<1+\frac{1}{2(k-1)}$ and $\frac{\alpha}{1-2(k-1)(\alpha-1)}<\tilde{\alpha}<+\infty$. Let $s=\frac{\tilde{\alpha}}{\tilde{\alpha}-\alpha}$, we have $\frac{\eta^{n}}{\omega_{\epsilon}^{n}}\in L^{2(\alpha-1)s}(\tilde{X}, \eta)$, and the $L^{2(\alpha-1)s}$-norm of $\frac{\eta^{n}}{\omega_{\epsilon}^{n}}$ is uniformly bounded in $\epsilon$.
\end{lem}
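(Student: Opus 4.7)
The plan is to localize the estimate to a neighborhood of the exceptional divisor $E=\pi^{-1}(\Sigma)$ and reduce it to a model computation on a blow-up chart. Away from $E$, $\pi$ is a biholomorphism and $\pi^{*}\omega$ is smooth and positive-definite, so $\omega_{\epsilon}=\pi^{*}\omega+\epsilon\eta$ is uniformly comparable to $\eta$ on compact subsets of $\widetilde{X}\setminus E$; there $\eta^{n}/\omega_{\epsilon}^{n}$ is uniformly bounded and contributes a uniformly bounded piece to the $L^{p}$-norm. By a partition of unity it therefore suffices to bound the integral on a tubular neighborhood of $E$, which is covered by finitely many blow-up charts.

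Next I would work out the model estimate for $\omega_{\epsilon}^{n}$ on a single chart. Choose coordinates $(z_{1},\ldots,z_{n})$ on $X$ with $\Sigma=\{z_{1}=\cdots=z_{k}=0\}$ and pass to the standard blow-up chart with coordinates $(u,v_{2},\ldots,v_{k},w_{1},\ldots,w_{n-k})$ on $\widetilde{X}$ related by $z_{1}=u$, $z_{i}=uv_{i}$ for $2\leq i\leq k$, $z_{k+j}=w_{j}$. Then $dz_{i}=v_{i}\,du+u\,dv_{i}$ for $2\leq i\leq k$, so in this basis the matrix of $\pi^{*}\omega$ has a $(dv_{i},dv_{j})$ block of the form $|u|^{2}G'$ with $G'$ uniformly positive Hermitian, a $(du,du)$ block and $(dw,dw)$ block bounded away from zero, and cross-terms of size $O(|u|)$. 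Adding $\epsilon$ times the matrix of $\eta$ replaces the $(dv,dv)$ block by $|u|^{2}G'+\epsilon H'$, whose determinant is comparable to $(|u|^{2}+\epsilon)^{k-1}$, while the $(du,du)$ and $(dw,dw)$ blocks remain uniformly bounded. A Schur-complement expansion shows the $O(|u|)$ cross-terms yield only lower-order corrections, giving the two-sided bound
$$
c\,(|u|^{2}+\epsilon)^{k-1}\,\eta^{n}\ \leq\ \omega_{\epsilon}^{n}\ \leq\ C\,\eta^{n},
$$
with $c,C>0$ independent of $\epsilon$. This pointwise model estimate is the technical core of the argument.

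Given this, the integral bound follows from an elementary one-variable calculation. Writing $u=re^{i\theta}$ so that the transverse-to-$E$ measure factor becomes $r\,dr\,d\theta$, and with $p=2(\alpha-1)s$, on each chart
$$
\int\Big(\frac{\eta^{n}}{\omega_{\epsilon}^{n}}\Big)^{p}\eta^{n}\ \lesssim\ \int_{0}^{r_{0}}(r^{2}+\epsilon)^{-p(k-1)}r\,dr\ =\ \tfrac{1}{2}\int_{0}^{r_{0}^{2}}(\sigma+\epsilon)^{-p(k-1)}\,d\sigma,
$$
which is uniformly bounded in $\epsilon\in(0,\epsilon_{1}]$ precisely when $p(k-1)<1$.

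Finally one verifies that the constraints $1<\alpha<1+\frac{1}{2(k-1)}$ and $\tilde\alpha>\alpha/(1-2(k-1)(\alpha-1))$ yield exactly $p(k-1)<1$: the first gives $2(k-1)(\alpha-1)<1$, and the second rearranges to $s=\tilde\alpha/(\tilde\alpha-\alpha)<1/(2(k-1)(\alpha-1))$, so $p(k-1)=2(\alpha-1)(k-1)s<1$. The expected main obstacle is the local matrix analysis that produces the two-sided model estimate for $\omega_{\epsilon}^{n}$ near $E$; once that is secured, the remaining integration is straightforward, and summing over the finitely many charts (together with the easy estimate away from $E$) completes the proof.
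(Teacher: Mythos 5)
The paper states this lemma without proof, citing Sibley's article, so there is no in-paper argument to compare against; your reconstruction stands on its own. The overall strategy --- localize near the exceptional divisor $E=\pi^{-1}(\Sigma)$ via the standard blow-up charts, establish the two-sided pointwise comparison $c(|u|^{2}+\epsilon)^{k-1}\eta^{n}\leq\omega_{\epsilon}^{n}\leq C\eta^{n}$ with constants independent of $\epsilon$, and reduce the $L^{p}$ bound to the one-variable integral $\int_{0}^{r_{0}}(r^{2}+\epsilon)^{-p(k-1)}r\,dr$ whose uniform convergence is exactly $p(k-1)<1$ --- is correct, and your verification that the hypotheses on $\alpha,\tilde{\alpha}$ are equivalent to $2(\alpha-1)(k-1)s<1$ is accurate. (You also silently and correctly read $\eta$ as a K\"ahler metric on $\tilde{X}$ rather than on $X$; the latter, as printed, would make $\omega_{\epsilon}$ degenerate on $E$.)

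The one imprecise step is the assertion that the $O(|u|)$ cross-terms of the block matrix ``yield only lower-order corrections'' in the Schur-complement expansion. That is not literally true: when $\epsilon$ is small relative to $|u|^{2}$, the correction $(B^{*}+\epsilon H_{12}^{*})(|u|^{2}G'+\epsilon H_{22})^{-1}(B+\epsilon H_{12})$ has size of order $1$, the same order as the leading $(du,dw)$-block $A$, so it is not a negligible perturbation on its face. The lower bound nonetheless holds, for instance because the Schur complement is monotone in the underlying positive-definite matrix, so the Schur complement of $\omega_{\epsilon}$ dominates that of $\pi^{*}\omega$, whose determinant equals $\det(\pi^{*}\omega)/\det(|u|^{2}G')$ and is bounded below by a positive constant on the chart for $u\neq 0$. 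A cleaner route that avoids the Schur complement entirely is to expand
\begin{equation*}
\omega_{\epsilon}^{n}=\sum_{j=0}^{n}\binom{n}{j}\,\epsilon^{j}\,(\pi^{*}\omega)^{n-j}\wedge\eta^{j},
\end{equation*}
note that every summand is a nonnegative $(n,n)$-form since $\pi^{*}\omega\geq 0$ and $\eta>0$, and keep only the $j=0$ and $j=k-1$ terms: $(\pi^{*}\omega)^{n}$ is bounded below by a constant times $|u|^{2(k-1)}\eta^{n}$, and $(\pi^{*}\omega)^{n-k+1}\wedge\eta^{k-1}$ is a strictly positive volume form near $E$ (because $\pi^{*}\omega$ has rank exactly $n-k+1$ along the fibres of $E$), giving $\omega_{\epsilon}^{n}\geq c\,\epsilon^{k-1}\eta^{n}$. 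Adding these two contributions yields $\omega_{\epsilon}^{n}\geq c(|u|^{2}+\epsilon)^{k-1}\eta^{n}$ directly, with no matrix algebra required.
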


\begin{lem}[\cite{Si}] \label{l:2}
	Let $\pi:\tilde{X}\rightarrow X$ be a blow-up along a smooth complex submanifold $\Sigma$ of complex codimension $k$ and the family of metrics $\omega_{\epsilon}$ be the same as in the previous lemma. Let $F$ be a $(1,1)$-form with values in $\mbox{End}(\tilde{E})$. Let $1<\alpha<1+\frac{1}{4k(k-1)}$ and $\frac{\alpha}{1-2(k-1)(\alpha-1)}<\tilde{\alpha}<1+\frac{1}{2(k-1)}$. Then there is a number $\kappa_{0}$ such that for any $0<\kappa\leq\kappa_{0}$, there exists a constant $C$ independent of $\epsilon$, $\epsilon_{1}$, and $\kappa$, and a constant $C(\kappa)$ such that:
	\begin{equation}
		\|\Lambda_{\omega_{\epsilon}}F\|_{L^{\alpha}(\tilde{X},\omega_{\epsilon})}\leq C(\|\Lambda_{\omega_{\epsilon_{1}}}F\|_{L^{\tilde{\alpha}}(\tilde{X},\omega_{\epsilon_{1}})}+\kappa\|F\|_{L^{2}(\tilde{X},\omega_{\epsilon_{1}})})+\epsilon_{1}C(\kappa)\|F\|_{L^{2}(\tilde{X},\omega_{\epsilon_{1}})}.
	\end{equation}
\end{lem}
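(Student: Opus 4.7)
The plan is to derive a pointwise comparison between $\Lambda_{\omega_\epsilon} F$ and $\Lambda_{\omega_{\epsilon_1}} F$, and then integrate with respect to $\omega_\epsilon^n$ using a weighted H\"older inequality whose key weight is the volume-form ratio $\eta^n/\omega_\epsilon^n$, precisely the quantity controlled by Lemma \ref{l:1}. The small-scale parameter $\kappa$ will be introduced via a splitting (either a Young-type inequality or a cutoff near the exceptional divisor of $\pi$) that separates a quantitatively $\kappa$-small $L^{2}$ contribution from one that is larger but multiplied by the small factor $\epsilon_{1}$.

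First, using the identity $n\, F \wedge \omega^{n-1} = (\Lambda_\omega F)\, \omega^n$ valid for any K\"ahler form $\omega$, and expanding $\omega_\epsilon = \omega_{\epsilon_1} + (\epsilon - \epsilon_1)\eta$ by the binomial formula, I would derive the exact identity
\begin{equation*}
    \Lambda_{\omega_\epsilon} F \;=\; \frac{\omega_{\epsilon_1}^n}{\omega_\epsilon^n}\, \Lambda_{\omega_{\epsilon_1}} F \;+\; n(\epsilon - \epsilon_1)\, \frac{F \wedge \eta \wedge P_\epsilon}{\omega_\epsilon^n},
\end{equation*}
where $P_\epsilon$ is a polynomial in $\omega_\epsilon$ and $\omega_{\epsilon_1}$ of bidegree $(n-2,n-2)$. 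Since $|\epsilon-\epsilon_1|\le\epsilon_1$ and $\eta$, $P_\epsilon$ are bounded in norm with respect to $\omega_{\epsilon_1}$, this produces a pointwise bound
\begin{equation*}
    |\Lambda_{\omega_\epsilon} F| \;\leq\; \frac{\omega_{\epsilon_1}^n}{\omega_\epsilon^n}\, |\Lambda_{\omega_{\epsilon_1}} F| \;+\; C\, \epsilon_1\, |F|_{\omega_{\epsilon_1}} \cdot \frac{\eta^n}{\omega_\epsilon^n},
\end{equation*}
after absorbing the comparison $\omega_{\epsilon_1}^n \lesssim \eta^n$ into the constant.

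Next, I would raise to the $\alpha$-power and integrate against $\omega_\epsilon^n$. For the main term, a change of measure yields
\begin{equation*}
    \int_{\tilde X}\Big(\tfrac{\omega_{\epsilon_1}^n}{\omega_\epsilon^n}\Big)^{\!\alpha} |\Lambda_{\omega_{\epsilon_1}} F|^\alpha \omega_\epsilon^n \;=\; \int_{\tilde X}\Big(\tfrac{\omega_{\epsilon_1}^n}{\omega_\epsilon^n}\Big)^{\!\alpha - 1} |\Lambda_{\omega_{\epsilon_1}} F|^\alpha \omega_{\epsilon_1}^n,
\end{equation*}
which splits via H\"older with conjugate exponents $\tilde\alpha/\alpha$ and $s = \tilde\alpha/(\tilde\alpha - \alpha)$ into $\|\Lambda_{\omega_{\epsilon_1}} F\|_{L^{\tilde\alpha}(\omega_{\epsilon_1})}^{\alpha}$ times an $L^{(\alpha-1)s}$ weight that, after comparison with $\eta^n/\omega_\epsilon^n$, is uniformly bounded in $\epsilon$ by Lemma \ref{l:1}. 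For the error term $\epsilon_1^\alpha \int |F|_{\omega_{\epsilon_1}}^\alpha (\eta^n/\omega_\epsilon^n)^\alpha \omega_\epsilon^n$, I would partition $\tilde X$ into a tubular neighborhood of the exceptional divisor of size governed by $\kappa$ and its complement: on the complement the ratio $\eta^n/\omega_\epsilon^n$ is uniformly bounded in $\epsilon$, producing the $\epsilon_1 C(\kappa)\|F\|_{L^2}$ term; on the tube, H\"older's inequality combined with the $L^{2(\alpha-1)s}$-bound from Lemma \ref{l:1} (and the vanishing of the tube's volume as $\kappa\to 0$) yields the $C\kappa\|F\|_{L^2}$ term.

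The main obstacle, and the reason for the stringent windows $1<\alpha<1+1/(4k(k-1))$ and $\tilde\alpha<1+1/(2(k-1))$, is the delicate exponent bookkeeping: the conjugate exponent $s$ chosen for the H\"older pairing in the main term must already saturate part of the integrability budget provided by Lemma \ref{l:1}, leaving only a narrow margin for a second invocation to control the error piece. In particular, the upper cap on $\tilde\alpha$ is needed so that $\tilde\alpha$ itself lies in the admissible range of Lemma \ref{l:1} when that lemma is reapplied in the tube estimate, while the halved bound on $\alpha-1$ reflects the fact that the two applications compound. Verifying that the resulting constants are genuinely independent of $\epsilon$ and $\epsilon_1$ (beyond the explicit $\epsilon_1$ factor) is the most technical point, and amounts to tracing uniform-in-$\epsilon$ comparability of $\omega_\epsilon$ with $\omega_{\epsilon_1}$ off the exceptional divisor.
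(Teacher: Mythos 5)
First, a point of context: the paper does not prove this lemma at all --- it is stated as Lemma~\ref{l:2} with the attribution $[\cite{Si}]$ and is simply quoted from Sibley's paper, where it appears with a full proof. So there is no ``paper's own proof'' to compare against, and your proposal is necessarily a reconstruction of Sibley's argument rather than of anything in the present article.

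On the content of your reconstruction: the algebraic identity coming from expanding $\omega_{\epsilon}=\omega_{\epsilon_{1}}+(\epsilon-\epsilon_{1})\eta$, and the H\"older splitting of the main term $\int(\omega_{\epsilon_{1}}^{n}/\omega_{\epsilon}^{n})^{\alpha-1}|\Lambda_{\omega_{\epsilon_{1}}}F|^{\alpha}\,\omega_{\epsilon_{1}}^{n}$ against the weight controlled by Lemma~\ref{l:1}, are both sound and in the right spirit. However, your treatment of the error term has a concrete gap, and there is a structural inconsistency that should raise a flag.

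The gap is in passing from the pointwise error bound $\epsilon_{1}\,|F|\,(\eta^{n}/\omega_{\epsilon}^{n})$ to the $L^{2}(\tilde X,\omega_{\epsilon_{1}})$ norm of $F$ that appears in the statement. After you raise to the $\alpha$-th power and integrate, H\"older naturally produces $\|F\|_{L^{2}(\tilde X,\eta)}$ (since the volume-ratio weight lives against $\eta^{n}$), not $\|F\|_{L^{2}(\tilde X,\omega_{\epsilon_{1}})}$. But these two norms are \emph{not} uniformly comparable as $\epsilon_{1}\to 0$: writing $\omega_{\epsilon_{1}}$ in an $\eta$-orthonormal coframe with eigenvalues $\mu_{a}$, one has $|F|^{2}_{\omega_{\epsilon_{1}}}\omega_{\epsilon_{1}}^{n}=\sum_{a,b}|F_{a\bar b}|^{2}\,(\prod_{c}\mu_{c})/(\mu_{a}\mu_{b})\,\eta^{n}$, and near the exceptional divisor the small eigenvalues $\mu_{a}\sim\epsilon_{1}$ make the factor $(\prod_{c}\mu_{c})/(\mu_{a}\mu_{b})$ tend to zero rather than stay bounded below. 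Consequently $\|F\|_{L^{2}(\eta)}\leq C\|F\|_{L^{2}(\omega_{\epsilon_{1}})}$ fails uniformly. If instead you keep $|F|_{\omega_{\epsilon_{1}}}$ pointwise and try to H\"older against $\omega_{\epsilon_{1}}^{n}$, the weight becomes $(\eta^{n}/\omega_{\epsilon}^{n})^{\alpha-1}(\eta^{n}/\omega_{\epsilon_{1}}^{n})\leq(\eta^{n}/\omega_{\epsilon}^{n})^{\alpha}$, which after the H\"older conjugation must lie in $L^{2\alpha/(2-\alpha)}$ --- a far higher exponent (close to $2$) than the $L^{2(\alpha-1)s}$ control (an exponent below $1/(k-1)$) that Lemma~\ref{l:1} delivers, so the integral diverges. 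Either way, the step ``absorb the comparison $\omega_{\epsilon_{1}}^{n}\lesssim\eta^{n}$'' does not close the argument; this is precisely where Sibley's proof does real work and where your sketch elides the difficulty.

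There is also a structural warning sign. As written, your decomposition puts the overall $\epsilon_{1}$-factor from the pointwise error bound in front of \emph{both} pieces of the tube/complement split, so you would obtain $\epsilon_{1}(C\kappa+C(\kappa))\|F\|_{L^{2}}$ --- a strictly stronger estimate than the stated $C\kappa\|F\|_{L^{2}}+\epsilon_{1}C(\kappa)\|F\|_{L^{2}}$. If that were achievable one would simply take $\kappa=1$, and the auxiliary parameter $\kappa$ (and the constant $\kappa_{0}$ in the statement) would be pointless. This suggests that in the actual argument the $\kappa\|F\|_{L^{2}}$ term does \emph{not} originate from the error term at all, but rather from applying the tube/complement cutoff to the \emph{main} term, where near the exceptional divisor one is forced to trade $\Lambda_{\omega_{\epsilon_{1}}}F$ for the full $|F|$ before H\"older. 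Your plan applies H\"older to the main term globally and hence cannot see this contribution. Finally, your explanation of the restricted windows $1<\alpha<1+\frac{1}{4k(k-1)}$ and $\tilde\alpha<1+\frac{1}{2(k-1)}$ as coming from a ``second application of Lemma~\ref{l:1}'' is plausible intuition but is never made precise; as it stands it is an assertion, not an argument, and it is exactly the exponent bookkeeping where hidden constraints would surface.
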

Using Donaldson's argument, we can obtain the following proposition.
\begin{prop}
	Let $(E,\phi)$ be a $V$-twisted Higgs bundle on a smooth K\"ahler manifold $(X, \omega)$, and let $F=\{E_{i}\}_{i=1}^{l}$ be a filtration of $(E,\phi)$ by saturated subsheaves. Let $\pi:\tilde{X}\rightarrow X$ be a blow-up along smooth complex submanifold and the family of metrics $\omega_{\epsilon}$ be the same as in the previous lemma. Let $(\tilde{E},\tilde{\phi})$ be the pullback $\tilde{V}$-twisted Higgs bundle and $\tilde{F}=\{\tilde{E}_{i}\}_{i=1}^{l}=\{Sat_{\tilde{E}}(\pi^{*}E_{i})\}_{i=1}^{l}$ be the filtration of $(\tilde{E},\tilde{\phi})$. Suppose $\tilde{E}_{i}$ are subbundles, and $\tilde{Q}_{i}=\tilde{E}_{i}/\tilde{E}_{i-1}$ are $\omega_{\epsilon}$-stable for all $0<\epsilon\leq\epsilon_{*}$. Then for any $\delta>0$, $1\leq p\leq +\infty$ and $0<\epsilon\leq\epsilon_{*}$, there is a smooth Hermitian metric $\tilde{H}$ on $\tilde{E}$ such that
	\begin{equation}
		\|\sqrt{-1}\Lambda_{\omega_{\epsilon}}F_{\bar{\partial}_{\tilde{E}},\tilde{H}}+[\tilde{\phi},\tilde{\phi}^{*\tilde{H}}]-\Psi(\tilde{F},(\mu_{1,\epsilon},\cdots,\mu_{l,\epsilon}),\tilde{H})\|_{L^{p}}\leq\delta,
	\end{equation}
	where $\mu_{i,\epsilon}$ is the slope of quotient $\tilde{Q}_{i}$ with respect to the metric $\omega_{\epsilon}$.
\end{prop}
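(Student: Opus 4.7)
My plan is to build $\tilde{H}$ by assembling Higgs--Hermitian--Einstein metrics on the stable graded pieces via the exponential deformation introduced by Donaldson and used in the untwisted Higgs setting by Li--Zhang~\cite{LZ,LZ1} and in the holomorphic bundle setting by Daskalopoulos--Wentworth~\cite{DW} and Sibley~\cite{Si}. The whole point of pulling back to the blow-up $\tilde{X}$ is precisely that the $\tilde{E}_{i}$ have become honest subbundles, so a smooth $C^{\infty}$-splitting of the filtration is available; this is what lets us apply Donaldson's trick directly.

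First I would invoke the Donaldson--Uhlenbeck--Yau theorem for $\tilde{V}$-twisted Higgs bundles (\cite{AG,BGM,Hit,S1}) on each $\omega_{\epsilon}$-stable quotient $(\tilde{Q}_{i},\tilde{\phi}_{i})$, producing a smooth Higgs--Hermitian--Einstein metric $\tilde{H}_{i}$ with
\begin{equation*}
\sqrt{-1}\Lambda_{\omega_{\epsilon}}F_{\bar{\partial}_{\tilde{Q}_{i}},\tilde{H}_{i}}+[\tilde{\phi}_{i},\tilde{\phi}_{i}^{*\tilde{H}_{i}}]=\mu_{i,\epsilon}\cdot\textmd{Id}_{\tilde{Q}_{i}}.
\end{equation*}
Next I would fix a smooth $C^{\infty}$-splitting $\tilde{E}\cong\bigoplus_{i=1}^{l}\tilde{Q}_{i}$. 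Since each $\tilde{E}_{i}$ is both $\bar{\partial}_{\tilde{E}}$-invariant and $\tilde{\phi}$-invariant, in this splitting both $\bar{\partial}_{\tilde{E}}$ and $\tilde{\phi}$ are block lower-triangular; their diagonal blocks are $\bar{\partial}_{\tilde{Q}_{i}}$ and $\tilde{\phi}_{i}$, and their strictly off-diagonal parts are fixed smooth tensors $\beta$ and $\gamma$. I would then consider the family of block-diagonal metrics $\tilde{H}_{s}=\bigoplus_{i=1}^{l}e^{sa_{i}}\tilde{H}_{i}$ for strictly increasing real numbers $a_{1}<\cdots<a_{l}$. Rescaling each block by a constant leaves the diagonal Chern curvatures and the diagonal commutators unchanged, so the block-diagonal part of $\sqrt{-1}\Lambda_{\omega_{\epsilon}}F_{\bar{\partial}_{\tilde{E}},\tilde{H}_{s}}+[\tilde{\phi},\tilde{\phi}^{*\tilde{H}_{s}}]$ equals $\bigoplus_{i}\mu_{i,\epsilon}\cdot\textmd{Id}_{\tilde{Q}_{i}}$ up to error terms involving the off-diagonal adjoints. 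Because $\tilde{H}_{s}$ respects the $C^{\infty}$-splitting, the projections $\pi_{i}^{\tilde{H}_{s}}$ are the obvious block projections, so $\Psi(\tilde{F},(\mu_{1,\epsilon},\ldots,\mu_{l,\epsilon}),\tilde{H}_{s})$ equals this block-diagonal expression exactly.

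A direct calculation shows that the nonzero adjoint blocks scale as $\beta^{*\tilde{H}_{s}}_{ij}=e^{s(a_{j}-a_{i})}\beta^{*\tilde{H}_{0}}_{ij}$ and $\gamma^{*\tilde{H}_{s}}_{ij}=e^{s(a_{j}-a_{i})}\gamma^{*\tilde{H}_{0}}_{ij}$ with $j<i$, so each decays uniformly like $e^{-sc}$ for some $c>0$ as $s\to+\infty$. Every off-diagonal entry of the curvature expression, and every remaining off-diagonal interaction on the diagonal, is a polynomial in $\beta,\gamma,\beta^{*\tilde{H}_{s}},\gamma^{*\tilde{H}_{s}}$ and their covariant derivatives in which every monomial contains at least one adjoint factor, hence decays exponentially in $s$ in every $C^{k}$-norm on the compact $\tilde{X}$. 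Taking $s$ large therefore gives the desired bound in any $L^{p}$-norm, $1\le p\le+\infty$, and $\tilde{H}=\tilde{H}_{s}$ is the required metric.

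The main obstacle is ensuring that the twisted Higgs term $[\tilde{\phi},\tilde{\phi}^{*\tilde{H}_{s}}]$, which is absent in Sibley's holomorphic case, behaves correctly under the deformation. The key structural fact is that $\tilde{\phi}$-invariance of the filtration forces $\tilde{\phi}$ itself to be block lower-triangular, so $\tilde{\phi}^{*\tilde{H}_{s}}$ is block upper-triangular with off-diagonal entries sharing exactly the $e^{s(a_{j}-a_{i})}$ decay of $\beta^{*\tilde{H}_{s}}$. Consequently the commutator contributes only terms that either match the desired diagonal block $\mu_{i,\epsilon}\cdot\textmd{Id}_{\tilde{Q}_{i}}$ exactly or carry at least one decaying adjoint factor, which is precisely what is needed to close the estimate.
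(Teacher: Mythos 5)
Your overall strategy is the right one: solve the Hermitian--Einstein equation on each $\omega_{\epsilon}$-stable quotient $\tilde{Q}_{i}$ via the Donaldson--Uhlenbeck--Yau theorem for twisted Higgs bundles, fix a smooth splitting $\tilde{E}\cong\bigoplus_{i}\tilde{Q}_{i}$ (available precisely because the $\tilde{E}_{i}$ are subbundles after blow-up), and take the Donaldson family $\tilde{H}_{s}=\bigoplus_{i}e^{sa_{i}}\tilde{H}_{i}$ with $a_{1}<\cdots<a_{l}$. You also correctly observe that $\tilde{H}_{s}$ is block-diagonal, so $\Psi(\tilde{F},(\mu_{1,\epsilon},\ldots,\mu_{l,\epsilon}),\tilde{H}_{s})=\bigoplus_{i}\mu_{i,\epsilon}\mathrm{Id}_{\tilde{Q}_{i}}$ exactly, and that the diagonal blocks of $\sqrt{-1}\Lambda_{\omega_{\epsilon}}F_{\bar{\partial}_{\tilde{E}},\tilde{H}_{s}}+[\tilde{\phi},\tilde{\phi}^{*\tilde{H}_{s}}]$ differ from this only by adjoint-bearing terms.

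However, the justification you give for the decay of the off-diagonal part is not correct. Writing $\bar{\partial}_{\tilde{E}}=\bar{\partial}_{0}+\beta$ and $\tilde{\phi}=\phi_{0}+\gamma$ with $\beta,\gamma$ strictly triangular (mapping $\tilde{Q}_{j}\to\tilde{Q}_{i}$, $i<j$), the Chern connection of $(\bar{\partial}_{\tilde{E}},\tilde{H}_{s})$ is $D_{0}+\beta-\beta^{*\tilde{H}_{s}}$, and its $(1,1)$-curvature contains the strictly triangular $(1,1)$-form $\partial_{0}\beta$. Similarly $[\tilde{\phi},\tilde{\phi}^{*\tilde{H}_{s}}]$ contains $[\gamma,\phi_{0}^{*}]$ on the off-diagonal. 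Neither $\sqrt{-1}\Lambda_{\omega_{\epsilon}}\partial_{0}\beta$ nor $[\gamma,\phi_{0}^{*}]$ contains an adjoint factor; both are independent of $s$ and in general nonzero. So your assertion that ``every monomial contains at least one adjoint factor, hence decays exponentially in $s$ in every $C^{k}$-norm'' is false: as tensors in a fixed frame these off-diagonal entries do not decay at all.

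The conclusion is nevertheless saved, but by a different mechanism which you must make explicit. The $L^{p}$-norm in the statement is computed with $\tilde{H}_{s}$, and for a block $M_{ij}\colon\tilde{Q}_{j}\to\tilde{Q}_{i}$ with $i<j$ one has $|M_{ij}|_{\tilde{H}_{s}}^{2}=e^{s(a_{i}-a_{j})}|M_{ij}|_{\tilde{H}_{0}}^{2}$, which decays exponentially because $a_{i}<a_{j}$; by $\tilde{H}_{s}$-Hermiticity of $\sqrt{-1}\Lambda_{\omega_{\epsilon}}F_{\bar{\partial}_{\tilde{E}},\tilde{H}_{s}}+[\tilde{\phi},\tilde{\phi}^{*\tilde{H}_{s}}]-\Psi$ the lower-triangular blocks then decay as well. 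Equivalently (and more transparently), conjugate by the diagonal gauge transformation $g_{s}=\bigoplus_{i}e^{sa_{i}/2}\mathrm{Id}_{\tilde{Q}_{i}}$: the pair $(\bar{\partial}_{\tilde{E}},\tilde{\phi},\tilde{H}_{s})$ is gauge-equivalent to $(\bar{\partial}_{0}+g_{s}\beta g_{s}^{-1},\phi_{0}+g_{s}\gamma g_{s}^{-1},\tilde{H}_{0})$, in which the off-diagonal tensors $(g_{s}\beta g_{s}^{-1})_{ij}=e^{s(a_{i}-a_{j})/2}\beta_{ij}$ and $(g_{s}\gamma g_{s}^{-1})_{ij}=e^{s(a_{i}-a_{j})/2}\gamma_{ij}$ genuinely go to zero in every $C^{k}$-norm, so that every term other than $\bigoplus_{i}\mu_{i,\epsilon}\mathrm{Id}_{\tilde{Q}_{i}}$ decays exponentially in the fixed $\tilde{H}_{0}$-norm; since the desired $L^{p}$-norm is gauge-invariant, this gives the estimate. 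Either of these two routes closes the gap; as written, your argument does not.
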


We also need the following additional propositions in next proof.
\begin{prop}\label{p:lp}
	Let $(E,\phi)$ be a $V$-twisted Higgs bundle on a smooth K\"ahler manifold $(X, \omega)$, and let $F=\{E_{i}\}_{i=1}^{l}$ be a filtration of $(E,\phi)$ by saturated subsheaves. Let $\pi:\tilde{X}\rightarrow X$ be a blow-up along smooth complex submanifold of complex codimension $k$ and the family of metrics $\omega_{\epsilon}$ be the same as in the previous lemma. Let $(\tilde{E},\tilde{\phi})$ be the pullback $\tilde{V}$-twisted Higgs bundle and $\tilde{F}=\{\tilde{E}_{i}\}_{i=1}^{l}=\{Sat_{\tilde{E}}(\pi^{*}E_{i})\}_{i=1}^{l}$ be the filtration of $(\tilde{E},\tilde{\phi})$. Suppose for any $\tilde{\delta}>0$ and any $0<\epsilon\leq\epsilon_{*}$, there is a smooth Hermitian metric $\tilde{H}$ on $\tilde{E}$ such that
	\begin{equation}\label{p:eq}
		\|\sqrt{-1}\Lambda_{\omega_{\epsilon}}F_{\bar{\partial}_{\tilde{E}},\tilde{H}}+[\tilde{\phi},\tilde{\phi}^{*\tilde{H}}]-\Psi(\tilde{F},(\mu_{1,\epsilon},\cdots,\mu_{l,\epsilon}),\tilde{H})\|_{L^{2}(\omega_{\epsilon})}\leq\tilde{\delta},
	\end{equation}	
	Then for any $\delta^{'}>0$ and any $1< p < 1+\frac{1}{4k(k-1)}$, there is a smooth Hermitian metric $H$ on $E$ such that
	\begin{equation}\label{p2:e}
		\|\sqrt{-1}\Lambda_{\omega}F_{\bar{\partial}_{E},H}+[\phi,\phi^{*H}]-\Psi(F,(\mu_{1},\cdots,\mu_{l}),H)\|_{L^{p}(\omega)}\leq\delta^{'},
	\end{equation}
	where $\mu_{i}$ is the $\omega$-slope of sheaf $Q_{i}$.
\end{prop}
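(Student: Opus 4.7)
My plan is to descend the metric $\tilde H$ furnished by the hypothesis to a smooth metric $H$ on $E\to X$, and then trade the $L^2(\omega_\epsilon)$-smallness on $\tilde X$ for an $L^p(\omega)$-smallness on $X$ by combining the measure-change estimate of Lemma \ref{l:1} with the curvature exchange estimate of Lemma \ref{l:2}. Since $\pi:\tilde X\setminus\pi^{-1}(\Sigma)\to X\setminus\Sigma$ is a biholomorphism (with $\Sigma$ the blow-up center of complex codimension $k\geq 2$) and $\pi_*\tilde E=E$, $\pi_*\tilde E_i=E_i$, the restriction of $\tilde H$ to $\tilde X\setminus\pi^{-1}(\Sigma)$ defines a smooth metric on $E|_{X\setminus\Sigma}$; I extend it smoothly across $\Sigma$ by gluing with an arbitrary background metric through a cut-off supported in a small tubular neighborhood of $\Sigma$. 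Under this identification, off the exceptional divisor, $\pi^*(\pi_i^H)=\tilde\pi_i^{\tilde H}$ and $\pi^*\phi=\tilde\phi$; setting
\begin{equation*}
A:=\sqrt{-1}\Lambda_\omega F_{\bar\partial_E,H}+[\phi,\phi^{*H}]-\Psi(F,(\mu_1,\ldots,\mu_l),H),
\end{equation*}
\begin{equation*}
\tilde A_\epsilon:=\sqrt{-1}\Lambda_{\omega_\epsilon}F_{\bar\partial_{\tilde E},\tilde H}+[\tilde\phi,\tilde\phi^{*\tilde H}]-\Psi(\tilde F,(\mu_{1,\epsilon},\ldots,\mu_{l,\epsilon}),\tilde H),
\end{equation*}
we find $\pi^*A=\tilde A_\epsilon+R_\epsilon$ on $\tilde X\setminus\pi^{-1}(\Sigma)$, where $R_\epsilon$ is an $O(\epsilon)$ term coming from $\pi^*\omega=\omega_\epsilon-\epsilon\eta$ plus a slope correction $|\mu_i-\mu_{i,\epsilon}|\to 0$ guaranteed by the corollary preceding Lemma \ref{l:1}.

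For the analytic core, given $1<p<1+\frac{1}{4k(k-1)}$ I pick $\alpha\in(p,1+\frac{1}{4k(k-1)})$ and then $\tilde\alpha$, $s=\tilde\alpha/(\tilde\alpha-\alpha)$ as in Lemmas \ref{l:1} and \ref{l:2}. H\"older's inequality and Lemma \ref{l:1} yield, for every $L^\alpha(\omega_\epsilon)$-integrable form $\Theta$ on $\tilde X$,
\begin{equation*}
\|\Theta\|_{L^p(\tilde X,\pi^*\omega)}\leq C\|\Theta\|_{L^\alpha(\tilde X,\omega_\epsilon)},
\end{equation*}
with $C$ independent of $\epsilon\in(0,\epsilon_1]$, because the Radon--Nikodym factor $(\pi^*\omega)^n/\omega_\epsilon^n$ is uniformly controlled via Lemma \ref{l:1}. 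Applied to $\Theta=\tilde A_\epsilon$, this reduces the problem to bounding $\|\tilde A_\epsilon\|_{L^\alpha(\omega_\epsilon)}$. The bracket and projection pieces are pointwise bounded uniformly in $\epsilon$ (the $C^0$-bound on $\tilde\phi=\pi^*\phi$ comes from Proposition \ref{ppp:1}, and $\tilde\pi_i^{\tilde H}$ are projections), so their $L^\alpha(\omega_\epsilon)$-norms are controlled by their $L^2(\omega_\epsilon)$-norms. For the curvature contraction $\sqrt{-1}\Lambda_{\omega_\epsilon}F_{\tilde H}$, Lemma \ref{l:2} gives a bound in $L^\alpha(\omega_\epsilon)$ by $\|\Lambda_{\omega_{\epsilon_1}}F_{\tilde H}\|_{L^{\tilde\alpha}(\omega_{\epsilon_1})}$ plus $(\kappa+\epsilon_1 C(\kappa))\|F_{\tilde H}\|_{L^2(\omega_{\epsilon_1})}$. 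Picking $\kappa$ and $\epsilon_1$ small and then invoking the hypothesis (\ref{p:eq}) at $\epsilon_1$ with $\tilde\delta$ arbitrarily small produces a metric $\tilde H$ making all three contributions small; a diagonal selection in $(\epsilon_1,\kappa,\tilde\delta)$ yields (\ref{p2:e}).

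The main obstacle is to show that the extension across $\Sigma$ and the identification of $H$ with (the push-down of) $\tilde H$ do not corrupt the estimate. Since $\mathrm{codim}_{\mathbb C}\Sigma\geq 2$, the $L^p$-mass of a shrinking tubular neighborhood of $\Sigma$ tends to zero and the extension contributes only a vanishing error, the cut-off region being controlled by the uniform pointwise bounds on the curvature of the background metric and on $\phi$. A slightly subtler point is that $\pi^*A\equiv\tilde A_\epsilon+O(\epsilon)$ is valid only on the locus where $\pi$ is a biholomorphism, but Lemma \ref{l:1} furnishes precisely the integrability needed to pass the estimate through $\pi^{-1}(\Sigma)$. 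Once these issues are settled, the argument parallels Sibley's proof in \cite{Si} for the untwisted case, the bracket term $[\phi,\phi^{*H}]$ requiring no extra work because $\phi$ is holomorphic and uniformly bounded.
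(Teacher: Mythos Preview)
Your overall strategy---apply Lemma \ref{l:2} to pass from $\omega_{\epsilon_1}$ to $\omega_\epsilon$, use Lemma \ref{l:1} to control the measure change, and glue with a background metric through a cut-off near $\Sigma$---is indeed the paper's strategy. But two genuine gaps prevent your sketch from closing.

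\textbf{Applying Lemma \ref{l:2} to the wrong form.} You separate $\tilde A_\epsilon$ into the curvature contraction $\sqrt{-1}\Lambda_{\omega_\epsilon}F_{\tilde H}$ and the bracket/projection piece, and then feed $F_{\tilde H}$ alone into Lemma \ref{l:2}. That produces
\[
\|\Lambda_{\omega_\epsilon}F_{\tilde H}\|_{L^\alpha(\omega_\epsilon)}\leq C\|\Lambda_{\omega_{\epsilon_1}}F_{\tilde H}\|_{L^{\tilde\alpha}(\omega_{\epsilon_1})}+(\kappa+\epsilon_1C(\kappa))\|F_{\tilde H}\|_{L^2(\omega_{\epsilon_1})}.
\]
The hypothesis \eqref{p:eq} does \emph{not} make $\|\Lambda_{\omega_{\epsilon_1}}F_{\tilde H}\|_{L^{\tilde\alpha}}$ small; it only makes the combination $\Lambda_{\omega_{\epsilon_1}}F_{\tilde H}+[\tilde\phi,\tilde\phi^{*\tilde H}]-\Psi$ small. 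Since the bracket/projection part is merely bounded (not small), your first term on the right is only bounded, and the inequality delivers no smallness. The paper avoids this by applying Lemma \ref{l:2} to the composite $(1,1)$-form
\[
\Theta_2=\sqrt{-1}F_{\tilde H_1}+\tfrac{\omega_{\epsilon_1}}{n}\bigl([\tilde\phi,\tilde\phi^{*\tilde H_1}]-\Psi(\tilde F,(\mu_{i,\epsilon_1}),\tilde H_1)\bigr),
\]
for which $\Lambda_{\omega_{\epsilon_1}}\Theta_2$ is \emph{exactly} the quantity bounded by $\tilde\delta$ in \eqref{p:eq}. The slope drift $\mu_{i,\epsilon_1}\to\mu_i$ and the cross term $(\omega_\epsilon-\omega_{\epsilon_1})\tilde\Psi_{\epsilon_1}$ are then handled separately.

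\textbf{Order of the cut-off.} You descend $\tilde H$ to $X\setminus\Sigma$ first and glue there. But the push-down of $\tilde H$ is in general singular along $\Sigma$, so on the annulus $U_R\setminus U_{R/2}$ you cannot control the curvature of the glued metric by ``pointwise bounds on the background metric and on $\phi$'' alone: the derivatives of the pushed-down $\tilde H$ enter as well. The paper instead performs the cut-off on $\tilde X$, setting $\tilde H_R=(1-\psi_R)\tilde H_1+\psi_R\pi^*H_D$, where both constituents are smooth on $\tilde E$; the curvature difference in the annulus is then bounded by $(C/R^2+C')\,\eta^n/\omega_\epsilon^n$, and Lemma \ref{l:1} together with $\tilde p<2<n$ shows its $L^p(\omega_\epsilon)$-norm vanishes as $R\to 0$ uniformly in $\epsilon$. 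Only after this is established does one let $\epsilon\to 0$ to obtain \eqref{p2:e} on $X$. Your reversed order cannot be salvaged without reproducing essentially this computation on $\tilde X$. (A minor point: the $C^0$-bound on $\tilde\phi=\pi^*\phi$ is just compactness of $\tilde X$; Proposition \ref{ppp:1} concerns the flow and is irrelevant here.)
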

\begin{proof}
	{\bf Step 1:}
	Let $\epsilon_{1}\in(0, \epsilon_{*})$, by the condition, we can choose a smooth metric $\tilde{H}_{1}$ satisfies
	(\ref{p:eq}) for $\epsilon_{1}$ and $\tilde{\delta}$ which will be chosen small enough later. For simplicity, we denote
	$\Theta_{1}=\sqrt{-1}F_{\bar{\partial}_{\tilde{E}},\tilde{H}_{1}}$. Then
	\begin{equation}\label{p:e1}
		\begin{split}
			\|\sqrt{-1}&\Lambda_{\omega_{\epsilon}}F_{\bar{\partial}_{\tilde{E}},\tilde{H}_{1}}+[\tilde{\phi},\tilde{\phi}^{*\tilde{H}_{1}}]-\Psi(\tilde{F},(\mu_{1},\cdots,\mu_{l}),\tilde{H}_{1})\|_{L^{p}(\omega_{\epsilon})}\\
			\leq& \Big\|\Lambda_{\omega_{\epsilon}}\{\Theta_{1}+\frac{\omega_{\epsilon_{1}}}{n}[\tilde{\phi},\tilde{\phi}^{*\tilde{H}_{1}}]-\frac{\omega_{\epsilon_{1}}}{n}\Psi(\tilde{F},(\mu_{1,\epsilon_{1}},\cdots,\mu_{l,\epsilon_{1}}),\tilde{H}_{1})\}\Big\|_{L^{p}(\omega_{\epsilon})}\\
			&+\Big\|\frac{1}{n}\Lambda_{\omega_{\epsilon}}(\omega_{\epsilon}-\omega_{\epsilon_{1}})\{[\tilde{\phi},\tilde{\phi}^{*\tilde{H}_{1}}]-\Psi(\tilde{F},(\mu_{1,\epsilon_{1}},\cdots,\mu_{l,\epsilon_{1}}),\tilde{H}_{1})\}\Big\|_{L^{p}(\omega_{\epsilon})}\\
			&+\|\Psi(\tilde{F},(\mu_{1},\cdots,\mu_{l}),\tilde{H}_{1})-\Psi(\tilde{F},(\mu_{1,\epsilon_{1}},\cdots,\mu_{l,\epsilon_{1}}),\tilde{H}_{1})\|_{L^{p}(\omega_{\epsilon})}
		\end{split}
	\end{equation}
Set
	\begin{equation*}
		\begin{split}
			\tilde{\Psi}_{\epsilon_{1}}=&[\tilde{\phi},\tilde{\phi}^{*\tilde{H}_{1}}]-\Psi(\tilde{F},(\mu_{1,\epsilon_{1}},\cdots,\mu_{l,\epsilon_{1}}),\tilde{H}_{1}),\\
			\Theta_{2}=&\Theta_{1}+\frac{\omega_{\epsilon_{1}}}{n}\tilde{\Psi}_{\epsilon_{1}},\ \ \Theta_{3}=(\omega_{\epsilon}-\omega_{\epsilon_{1}})\tilde{\Psi}_{\epsilon_{1}},
		\end{split}
	\end{equation*}
	where $\|\tilde{\Psi}_{\epsilon_{1}}\|_{L^{2}(\omega_{\epsilon_{1}})}$ is uniformly bounded in $\epsilon_{1}$. Applying Lemma \ref{l:2} to $\Theta_{i}$, $i=2,3$, we have
	\begin{equation}
		\begin{split}
			\|\Lambda_{\omega_{\epsilon}}\Theta_{i}\|_{L^{p}(\omega_{\epsilon})}\leq C(\|\Lambda_{\omega_{\epsilon_{1}}}\Theta_{i}\|_{L^{\tilde{p}}(\omega_{\epsilon_{1}})}+\kappa\|\Theta_{i}\|_{L^{2}(\omega_{\epsilon_{1}})})+\epsilon_{1}C(\kappa)\|\Theta_{i}\|_{L^{2}(\omega_{\epsilon_{1}})}.
		\end{split}
	\end{equation}
	Following  the arguments in Sibley's article (\cite[Page 35]{Si}), if we choose $\kappa$ and $\epsilon_{1}$  small enough, we have
	\begin{equation}
		\begin{split}
			\|\Lambda_{\omega_{\epsilon}}\Theta_{i}\|_{L^{p}(\omega_{\epsilon})}\leq \frac{\delta}{3}.
		\end{split}
	\end{equation}
	On the other hand, since $\mu_{i,\epsilon}\rightarrow \mu_{i}$ as $\epsilon \rightarrow 0$, we may choose $\epsilon_{1}$ small enough so that the third term in (\ref{p:e1}) is also smaller than $\delta/3$. Then for any $\delta>0$ and $1 < p<1+\frac{1}{4k(k-1)}$, we have \begin{equation}\label{p:ee}
		\|\sqrt{-1}\Lambda_{\omega_{\epsilon}}F_{\bar{\partial}_{\tilde{E}},\tilde{H}_{1}}+[\tilde{\phi},\tilde{\phi}^{*\tilde{H}_{1}}]-\Psi(\tilde{F},(\mu_{1},\cdots,\mu_{l}),\tilde{H}_{1})\|_{L^{p}(\omega_{\epsilon})}\leq\delta
	\end{equation}
	for any $0<\epsilon\leq\epsilon_{1}$, where $\mu_{i}$ is the slope of $Q_{i}$ with respect to the metric $\omega$.
	
	{\bf Step 2:} In order to obtain a smooth metric on $E$, we need to use a cut-off argument. Since $\Sigma$ is a smooth complex submanifold, the open set $\{(x,v)\in N_{\Sigma}|\ |v|< R\}$ in the normal bundle $N_{\Sigma}$ of $\Sigma$, is diffeomorphic to an open neighborhood $U_{R}$ of $\Sigma$ for $R$ sufficiently small. For any small $R$, we may choose a smooth cut-off function $\psi_{R}$  satisfying   $\text{supp}\psi_R\subset U_{R}$, $\psi_R=1$ on $U_{R/2}$, $0 \leq \psi_{R} \leq 1$, and furthermore $|\partial\psi_{R}|^{2}_{\omega}+|\partial\bar{\partial}\psi_{R}|_{\omega} \leq CR^{-2}$, where $C$ is a positive constant independent of $R$.
	
	Let $H_{D}$ be a smooth Hermitian metric on bundle $E$, and $\tilde{H}_{1}$ be the metric on $\tilde{E}$ such that (\ref{p:ee}) holds for all $0 < \epsilon \leq \epsilon_{1}$ where $\delta\leq \frac{\delta^{'}}{4}$. Note that $E$ is isomorphic to $\tilde{E}$ outsides $\Sigma$, we can define
	\begin{equation}
		H_{R}=(1-\psi_{R})\tilde{H}_{1}+\psi_{R}H_{D}
	\end{equation}
	on bundle $E$, $\tilde{H}_{R}=\pi^{*}H_{R}$ and $\tilde{H}_{D}=\pi^{*}H_{D}$ on bundle $\tilde{E}$.
	
	Set $\Theta(\tilde{H}_{R})=\sqrt{-1}F_{\bar{\partial}_{\tilde{E}},\tilde{H}_{R}}$,  then we have
	\begin{equation}
		\begin{split}
			\int_{\tilde{X}}|\Lambda&_{\omega_{\epsilon}}\Theta(\tilde{H}_{R})+[\tilde{\phi},\tilde{\phi}^{*\tilde{H}_{R}}]-\Psi(\tilde{F},(\mu_{1},\cdots,\mu_{l}),\tilde{H}_{R})|^{p}_{\tilde{H}_{R}}\frac{\omega_{\epsilon}^{n}}{n!}\\
			\leq&\int_{\pi^{-1}(U_{R/2})}|\Lambda_{\omega_{\epsilon}}\Theta(\tilde{H}_{D})+[\tilde{\phi},\tilde{\phi}^{*\tilde{H}_{D}}]-\Psi(\tilde{F},(\mu_{1},\cdots,\mu_{l}),\tilde{H}_{D})|_{\tilde{H}_{D}}^{p}\frac{\omega_{\epsilon}^{n}}{n!}\\
			&+\int_{\tilde{X}\setminus\pi^{-1}(U_{R})}|\Lambda_{\omega_{\epsilon}}\Theta(\tilde{H}_{1})+[\tilde{\phi},\tilde{\phi}^{*\tilde{H}_{1}}]-\Psi(\tilde{F},(\mu_{1},\cdots,\mu_{l}),\tilde{H}_{1})|_{\tilde{H}_{1}}^{p}\frac{\omega_{\epsilon}^{n}}{n!}\\
			&+C(p)\int_{\pi^{-1}(U_{R}\setminus U_{R/2})}|\Lambda_{\omega_{\epsilon}}(\Theta(\tilde{H}_{R})-\Theta(\tilde{H}_{1}))|_{\tilde{H}_{R}}^{p}\frac{\omega_{\epsilon}^{n}}{n!}\\
			&+C(p)\int_{\pi^{-1}(U_{R}\setminus U_{R/2})}|\Lambda_{\omega_{\epsilon}}\Theta(\tilde{H}_{1})+[\tilde{\phi},\tilde{\phi}^{*\tilde{H}_{R}}]-\Psi(\tilde{F},(\mu_{1},\cdots,\mu_{l}),\tilde{H}_{R})|_{\tilde{H}_{R}}^{p}\frac{\omega_{\epsilon}^{n}}{n!}.
		\end{split}
	\end{equation}
	For the first term, we have
	\begin{equation}
		\begin{split}
			\int_{\pi^{-1}(U_{R/2})}|\Lambda_{\omega_{\epsilon}}&\Theta(\tilde{H}_{D})+[\tilde{\phi},\tilde{\phi}^{*\tilde{H}_{D}}]-\Psi(\tilde{F},(\mu_{1},\cdots,\mu_{l}),\tilde{H}_{D})|_{\tilde{H}_{D}}^{p}\frac{\omega_{\epsilon}^{n}}{n!}\\
			\leq& C_{1}\int_{\pi^{-1}(U_{R/2})}	\Big(\frac{\eta^{n}}{\omega_{\epsilon}^{n}}\Big)^{p-1}\frac{\eta^{n}}{n!}\\
			&+C_{2}\int_{\pi^{-1}(U_{R/2})}|[\tilde{\phi},\tilde{\phi}^{*\tilde{H}_{D}}]-\Psi(\tilde{F},(\mu_{1},\cdots,\mu_{l}),\tilde{H}_{D})|_{\tilde{H}_{D}}^{p}\frac{\omega_{\epsilon}^{n}}{n!},
		\end{split}
	\end{equation}
	where $C_{1}$ and $C_{2}$ are constants independent of $\epsilon$ and $R$.
	
	For the third term, we have
	\begin{equation}
		\begin{split}
			|\Lambda_{\omega_{\epsilon}}(F_{\tilde{H}_{R}}-F_{\tilde{H}_{1}})|_{\tilde{H}_{R}}\leq \Big(\frac{C_{3}}{R^{2}}+C_{4}\Big)\frac{\eta^{n}}{\omega_{\epsilon}^{n}},
		\end{split}
	\end{equation}
	where $C_{3}$ and $C_{4}$ are independent of $\epsilon$ and $R$. Then by H\"older's inequality, we have
	\begin{equation}
		\begin{split}
			\int_{\pi^{-1}(U_{R}\setminus U_{R/2})}|\Lambda&_{\omega_{\epsilon}}(\Theta(\tilde{H}_{R})-\Theta(\tilde{H}_{1}))|_{\tilde{H}_{R}}^{p}\frac{\omega_{\epsilon}^{n}}{n!}\\
			\leq & \Big(\int_{\pi^{-1}(U_{R}\setminus U_{R/2})}\Big(\frac{\eta^{n}}{\omega_{\epsilon}^{n}}\Big)^{(1-p)s}\frac{\eta^{n}}{n!}\Big)^{\frac{1}{s}}\Big(\int_{\pi^{-1}(U_{R}\setminus U_{R/2})}\Big(\frac{C_{3}}{R^{2\tilde{p}}}+C_{4}\Big)\frac{\eta^{n}}{n!}\Big)^{\frac{p}{\tilde{p}}},
		\end{split}
	\end{equation}
	where $s$ and $\tilde{p}$ are constants as in Lemma \ref{l:1}. Hence
	\begin{equation}
		\begin{split}
			\int_{\pi^{-1}(U_{R}\setminus U_{R/2})}\Big(\frac{C_{3}}{R^{2\tilde{p}}}+C_{4}\Big)\frac{\eta^{n}}{n!}\leq C_{3}R^{2n-2\tilde{p}}+C_{4}R^{2n}.
		\end{split}
	\end{equation}
	Since $p<1+\frac{1}{4k(k-1)}$, $\frac{p}{1-2(k-1)(p-1)}<\frac{2k}{2k-1}(1+\frac{1}{4k(k-1)})\leq\frac{3}{2}$, we may choose $\tilde{p}<2$. We also have
	\begin{equation}
		\begin{split}
			\int_{\pi^{-1}(U_{R})}|[\tilde{\phi},\tilde{\phi}^{*\tilde{H}_{D}}]-\Psi(\tilde{F},(\mu_{1},\cdots,\mu_{l}),\tilde{H}_{D})|_{\tilde{H}_{D}}^{p}\frac{\omega_{\epsilon}^{n}}{n!}&\rightarrow 0,\\
			\int_{\pi^{-1}(U_{R})}|[\tilde{\phi},\tilde{\phi}^{*\tilde{H}_{R}}]-\Psi(\tilde{F},(\mu_{1},\cdots,\mu_{l}),\tilde{H}_{R})|_{\tilde{H}_{R}}^{p}\frac{\omega_{\epsilon}^{n}}{n!}&\rightarrow 0
		\end{split}
	\end{equation}
	as $R\rightarrow 0$, uniformly in $\epsilon$.
	
	By above formulas and Lemma \ref{l:1}, choosing $R$ small enough, we have
	\begin{equation}
		\begin{split}
			\int_{\tilde{X}}|\Lambda&_{\omega_{\epsilon}}\Theta(\tilde{H}_{R})+[\tilde{\phi},\tilde{\phi}^{*\tilde{H}_{R}}]-\Psi(\tilde{F},(\mu_{1},\cdots,\mu_{l}),\tilde{H}_{R})|^{p}_{\tilde{H}_{R}}\frac{\omega_{\epsilon}^{n}}{n!}\leq\delta^{'}
		\end{split}
	\end{equation}
	for all $0<\epsilon\leq \epsilon_{1}$. Now let $\epsilon\rightarrow 0$, we have
	\begin{equation}
		\begin{split}
			\int_{X}|\sqrt{-1}\Lambda_{\omega}F_{\bar{\partial}_{E},H_{R}}+[\phi,\phi^{*H_{R}}]-\Psi(F,(\mu_{1},\cdots,\mu_{l}),H_{R})|^{p}_{H_{R}}\frac{\omega^{n}}{n!}\leq\delta^{'}.
		\end{split}
	\end{equation}
\end{proof}

\begin{thm}\label{thm:1}
	Let $(E,\bar{\partial}_{A_{0}}, \phi_{0})$ be a $V$-twisted Higgs bundle on a smooth K\"ahler manifold $(X,\omega)$,
	and $(A(t), \phi(t))$ be the smooth solution of the Yang--Mills--Higgs flow on Hermitian vector bundle $(E,H_{0})$ with initial data $(A_{0}, \phi_{0})$. Suppose that for any $\delta^{'}>0$ and any $1< p < p_{0}$ there is a smooth metric $H$ on $(E,\bar{\partial}_{A_{0}}, \phi_{0})$ such that (\ref{p2:e}) holds, where $\vec{\mu}_{0}$ is the HN type of $(E, \bar{\partial}_{A_{0}}, \phi_{0})$. Let $(A_{\infty}, \phi_{\infty})$ be an Uhlenbeck limit of $(A_{t}, \phi_{t})$, and $(E_{\infty}, H_{\infty})$ be the corresponding Hermitian vector bundle defined away from $\Sigma_{an}$. Then
	\begin{equation}
		\begin{split}
			\HYM_{\alpha,N}(A_{\infty},\phi_{\infty})=\lim_{t\rightarrow\infty}\HYM_{\alpha,N}(A(t),\phi(t))=\HYM_{\alpha,N}(\vec{\mu}_{0})
		\end{split}
	\end{equation}
	for all $1< \alpha< p_{0}$ and all $N \in\mathbb{R}$; Moreover, the HN type of $(E,\bar{\partial}_{A_{0}}, \phi_{0})$ is equal to the HN type of $(E_{\infty}, \bar{\partial}_{A_{\infty}}, \phi_{\infty})$.
\end{thm}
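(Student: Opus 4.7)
The first equality $\HYM_{\alpha,N}(A_\infty,\phi_\infty)=\lim_{t\to\infty}\HYM_{\alpha,N}(A(t),\phi(t))$ is already packaged in equation~(\ref{eq:j}); the real content of Theorem~\ref{thm:1} is to identify this common value with $\HYM_{\alpha,N}(\vec\mu_0)$, after which the equality of HN types will follow from Lemma~\ref{l:DW2}(2) once $N$ is chosen so large that $\vec\mu_0+N\geq 0$ and $\vec\lambda_\infty+N\geq 0$ componentwise. The plan is to sandwich the common limit between $\HYM_{\alpha,N}(\vec\mu_0)$ from below and $\HYM_{\alpha,N}(\vec\mu_0)+C\delta$ from above, then let $\delta\to 0$.

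For the lower bound I would combine the partial-sum inequality $\vec\mu_0\leq\vec\lambda_\infty$ of~(\ref{e:HNT})---itself a consequence of Proposition~\ref{prop4:5} applied to the $\phi_0$-invariant HN filtration together with Corollary~\ref{cor}---with the monotonicity property of Lemma~\ref{l:DW2}(1) to obtain
\[
\HYM_{\alpha,N}(\vec\mu_0)\leq \HYM_{\alpha,N}(\vec\lambda_\infty)=\HYM_{\alpha,N}(A_\infty,\phi_\infty),
\]
the last equality using the orthogonal splitting $E_\infty=\oplus_{i}E_\infty^{i}$ of Theorem~\ref{thm:main1}(1) and the fact that $\theta_\infty$ acts as $\lambda_i\,\mathrm{Id}$ on each $E_\infty^{i}$.

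For the upper bound, given $\alpha\in(1,p_0)$ and $\delta>0$, I would invoke the hypothesis to produce a smooth $H$ with $\|\sqrt{-1}\Lambda_\omega F_{\bar\partial_E,H}+[\phi_0,\phi_0^{*H}]-\Psi^{HN}\|_{L^\alpha(H)}\leq\delta$, let $\sigma\in\mathcal{G}^{\mathbb{C}}$ be the positive $H_0$-self-adjoint element with $\sigma^{*H_0}\sigma=H_0^{-1}H$, and set $(A_H,\phi_H):=\sigma\cdot(A_0,\phi_0)\in\mathcal{B}_{H_0}$. The complex-gauge action transports the estimate to $\|\theta(A_H,\phi_H)-\widetilde\Psi\|_{L^\alpha(H_0)}\leq\delta$ with $\widetilde\Psi$ pointwise conjugate to $\Psi^{HN}$ (hence with spectrum $\vec\mu_0$). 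Convexity and $\alpha$-homogeneity of $\varphi_\alpha$ yield the pointwise estimate $|\varphi_\alpha(a+N)-\varphi_\alpha(b+N)|\leq C(|a|^{\alpha-1}+|b|^{\alpha-1}+|N|^{\alpha-1})|a-b|$, which combined with H\"older's inequality and Lemma~\ref{l:DW1} gives $\HYM_{\alpha,N}(A_H,\phi_H)\leq \HYM_{\alpha,N}(\vec\mu_0)+C_{\alpha,N}\delta$. Running the Donaldson heat flow~(\ref{DF}) from $H$ in place of $H_0$ and converting via the associated complex-gauge family $\tilde\sigma(t)$ produces a YMH trajectory in $\mathcal{B}_{H_0}$ with initial data $(A_H,\phi_H)$; monotonicity of $\HYM_{\alpha,N}$ along it (equation~(\ref{eq})) together with Corollary~\ref{cor} yields $\HYM_{\alpha,N}(A_H^\infty,\phi_H^\infty)\leq \HYM_{\alpha,N}(\vec\mu_0)+C\delta$ for any Uhlenbeck limit $(A_H^\infty,\phi_H^\infty)$ of this new flow. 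Since $(A_H,\phi_H)$ shares the HN type $\vec\mu_0$ of $(A_0,\phi_0)$, the lower-bound argument applied to the new flow reverses the inequality up to $O(\delta)$; sending $\delta\to 0$ and invoking Lemma~\ref{l:DW2}(2) forces the spectrum of $\theta$ at this new limit to equal $\vec\mu_0$.

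The remaining, and principal, obstacle is to transfer this conclusion back to the original limit $(A_\infty,\phi_\infty)$, i.e., to show that the spectrum of $\theta_\infty$ is independent of the complex-gauge representative of $(A_0,\phi_0)$ used to start the flow. I plan to adapt the argument of Daskalopoulos--Wentworth~\cite{DW}, refined to the Higgs setting in~\cite{LZ,LZ1}: one checks that the infimum of $\HYM_{\alpha,N}$ over the complex-gauge orbit of $(A_0,\phi_0)$ is attained in the limit along the YMH flow starting from any representative, so this limiting value is an orbit invariant and must agree at every Uhlenbeck limit. Granted this, $\vec\lambda_\infty=\vec\mu_0$, giving $\HYM_{\alpha,N}(A_\infty,\phi_\infty)=\HYM_{\alpha,N}(\vec\mu_0)$ for all $\alpha\in(1,p_0)$ and all $N\in\mathbb{R}$, together with the equality of HN types.
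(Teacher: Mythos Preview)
Your proposal is correct and follows essentially the same route as the paper: sandwich the limiting $\HYM_{\alpha,N}$ between $\HYM_{\alpha,N}(\vec\mu_0)$ (via (\ref{e:HNT}) and Lemma~\ref{l:DW2}(1)) and $\HYM_{\alpha,N}(\vec\mu_0)+O(\delta)$ (via the approximate critical metric and monotonicity along the flow), then invoke the Daskalopoulos--Wentworth connectedness argument from \cite{DW,LZ} to pass from the special metric back to $H_0$. One ingredient you leave implicit but which the paper makes explicit is the discreteness of the image of the degree map on line bundles: this yields a gap $\delta_0$ separating $\HYM_{\alpha,N}(\vec\mu_0)$ from the next larger value achievable by an HN type, and it is precisely this gap that powers the open-and-closed argument for the set $\mathscr{H}_\delta$ of initial metrics whose flow eventually falls below $\HYM_{\alpha,N}(\vec\mu_0)+\delta$---your ``orbit invariance of the limiting value'' is a rephrasing of this, but the discreteness is what makes it go through, so it is worth naming. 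As a minor simplification, the paper obtains your upper bound directly from the triangle inequality for the $\varphi_\alpha^{1/\alpha}$-norm of Lemma~\ref{l:DW1}, bypassing the pointwise Lipschitz estimate on $\varphi_\alpha$ you propose.
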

\begin{proof}
The proof is  the same as \cite{LZ,LZ1}, and we write the main part of the proof here for the convenience of the reader. First, by triangle inequality and Lemma \ref{l:DW1}, we have
\begin{equation}
	\begin{split}
	|(\HYM&_{\alpha,N}(A_{0},\phi_{0},H))^{1/\alpha}-(\HYM_{\alpha,N}(\vec{\mu}_{0}))^{1/\alpha}|\\
	\leq &\big(\int_{X}|(\varphi_{\alpha}(\sqrt{-1}(\theta(A_{0},\phi_{0},H)+N\cdot \textmd{Id}_{E})))^{1/\alpha}-(\varphi_{\alpha}(\sqrt{-1}(\vec{\mu}_{0}+N)))^{1/\alpha}|^{\alpha}dv_{g}\big)^{1/\alpha}\\
	\leq &\big(\int_{X}\varphi_{\alpha}(\sqrt{-1}(\theta(A_{0},\phi_{0},H)-\Psi(F,(\mu_{1},\cdots,\mu_{l}),H)))dv_{g}\big)^{1/\alpha}\\
	\leq &C(\alpha)\|\sqrt{-1}\Lambda_{\omega}F_{\bar{\partial}_{A_{0}},H}+[\phi_{0},\phi_{0}^{*H}]-\Psi(F,(\mu_{1},\cdots,\mu_{l}),H)\|_{L^{\alpha}}.
	\end{split}
\end{equation}
Combining this with condition (\ref{p2:e}), we know that for any $\delta>0$ and any $1<\alpha<p_{0}$ there is a metric $H$ such that
\begin{equation}
	\begin{split}
		\HYM_{\alpha,N}(A_{0},\phi_{0},H)\leq \HYM_{\alpha,N}(\vec{\mu}_{0})+\delta.
	\end{split}
\end{equation}
 Since the image of the degree map on line bundles is discrete, for fixed $\alpha$ ($1<\alpha \leq p_{0}$) and fixed $N$, we can define $\delta_{0}>0$ such that
\begin{equation}
2\delta_{0}+\HYM_{\alpha,N}(\vec{\mu}_{0}) = \min\{\HYM_{\alpha,N}(\vec{\mu}):\HYM_{\alpha,N}(\vec{\mu})> \HYM_{\alpha,N}(\vec{\mu}_{0})\},
\end{equation}
where $\vec{\mu}$ runs over all possible HN types of $V$-twisted Higgs sheaves with rank $r$.

Let $H$ be a Hermitian metric on the complex bundle $E$, and $(A^{H}(t), \phi^{H}(t))$ be the solution of the Yang--Mills--Higgs flow on Hermitian vector bundle $(E, H)$ with initial pair $(A_{0}^{H}, \phi_{0})\in \mathcal{B}_{H}$ where $A_{0}^{H}$ is the Chern connection associated with $(\bar{\partial}_{A_{0}},H)$. Let $(A^{H}_{\infty}, \phi_{\infty})$ be an Uhlenbeck limit along the flow. We can choose the metric $H$ such that
\begin{equation}\label{e:b1}
	\HYM_{\alpha,N}(A_{0},\phi_{0},H)\leq \HYM_{\alpha,N}(\vec{\mu}_{0})+\delta_{0}.
\end{equation}
From (\ref{e:HNT}), (\ref{eq:j}) and Lemma \ref{l:DW1}, it follows that
\begin{equation}
	\HYM_{\alpha,N}(\vec{\mu}_{0})\leq \HYM_{\alpha,N}(A^{H}_{\infty},\phi_{\infty})\leq \HYM_{\alpha,N}(\vec{\mu}_{0})+\delta_{0}.
\end{equation}
Thus by the definition of $\delta_{0}$, we must have $\HYM_{\alpha,N}(A^{H}_{\infty}, \phi_{\infty}) = \HYM_{\alpha,N}(\vec{\mu}_{0})$. This shows that the result holds if the metric $H_{0}$ satisfies (\ref{e:b1}).

Let $\mathscr{H}(E)$ be the space of all smooth metrics on $E$. For any fixed $\delta$, we define a subset of $\mathscr{H}(E)$ denoted by $\mathscr{H}_{\delta}$ as follows:  $H\in \mathscr{H}_{\delta}$ if there exists  some $T \geq 0$ such that
\begin{equation}
\HYM_{\alpha,N}(A^{H}(t), \phi^{H}(t)) < \HYM_{\alpha,N}(\vec{\mu}_{0}) + \delta
\end{equation}
for all $t \geq T$. In our proof, we may assume that $0 < \delta \leq \frac{\delta_{0}}{2}$. It is easy to see that $\mathscr{H}_{\delta}$ is not empty. Following the argument in \cite[Lemma 4.3]{DW} (or \cite[Thoerem 5.13]{LZ}), we can show that $\mathscr{H}_{\delta}$ is both closed and open in $\mathscr{H}(E)$. Since $\mathscr{H}(E)$ is connected, we obtain that $\mathscr{H}(E)=\mathscr{H}_{\delta}$. From the previous discussion, we have
\begin{equation}\label{e:b2}
	\HYM_{\alpha,N}(A_{\infty}, \phi_{\infty})=\lim_{t\rightarrow\infty}\HYM_{\alpha,N}(A(t), \phi(t)) = \HYM_{\alpha,N}(\vec{\mu}_{0}).
\end{equation}
	
Let $\vec{\lambda}_{\infty}$ be the HN type of $(E_{\infty}, \bar{\partial}_{A_{\infty}}, \phi_{\infty})$, by (\ref{e:b2}), we have $\varphi_{\alpha}(\sqrt{-1}(\vec{\mu}_{0}+N))=\varphi_{\alpha}(\sqrt{-1}(\vec{\lambda}_{\infty}+N))$ for all $1<\alpha<p_{0}$ and all $N$. Choosing $N$ large enough, by Lemma \ref{l:DW2}, we get $\vec{\mu}_{0}=\vec{\lambda}_{\infty}$.
\end{proof}

Let $\{\pi_{i}\}_{i=1}^{l}$ be a HN filtration of $(E,A_{0},\phi_{0})$, the action $g^{j}$ produces HN filtration $\{\pi_{i}^{(j)}\}$ of $(E,A_{j},\phi_{j})$. Then
\begin{lem}\label{l:3}
	Let $(E, \bar{\partial}_{A_{0}}, \phi_{0})$ be a $V$-twisted Higgs bundle on a smooth K\"ahler manifold $(X, \omega)$, and
	satisfy the same assumptions as that in Theorem \ref{thm:1}.
	\begin{itemize}
		\item[(1)] Let $\{\pi_{i}^{\infty}\}$ be the HN filtration of $(E_{\infty},\bar{\partial}_{\infty}, \phi_{\infty})$, then there is a subsequence of HN filtration $\{\pi_{i}^{(j)}\}$ converges to a filtration $\{\pi_{i}^{\infty}\}$ strongly in $L^{p}\cap L^{2}_{1,loc}$ off $\Sigma_{an}$ for all $1\leq p<+\infty$.
		\item[(2)]Assume the $V$-twisted Higgs bundle $(E,\bar{\partial}_{A_{0}}, \phi_{0})$ is semistable and $\{E_{s,i}\}$ is the Seshadri filtration of $(E, \bar{\partial}_{A_{0}}, \phi_{0})$, then, after passing to a subsequence, $\{\pi_{s,i}^{(j)}\}$ converges to a filtration $\{\pi_{s,i}^{\infty}\}$ strongly in $L^{p}\cap L^{2}_{1,loc}$  off $\Sigma_{an}$ for all $1\leq p<+\infty$, the rank and degree of $\pi_{i}^{\infty}$ is equal to the rank and degree of $\pi_{s,i}^{(j)}$ for all $i$ and $j$.
	\end{itemize}
\end{lem}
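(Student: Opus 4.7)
The plan is to implement the standard Daskalopoulos--Wentworth strategy, as adapted by Li--Zhang to the Higgs setting, with the only new ingredient being that the degree identity (\ref{eq:4.4}) now carries an extra $[\phi,\pi]$ term. The crucial input is Theorem~\ref{thm:1}, which pins down the HN type of the Uhlenbeck limit a priori; the candidate limiting filtration therefore has prescribed ranks and degrees, and I intend to use this rigidity both to upgrade weak convergence to strong convergence and to identify the limit.

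First I would establish uniform bounds. For each $j$, applying (\ref{eq:4.4}) to the projection $\pi_i^{(j)}$ of $(E,A_j,\phi_j)$ onto $E_i$ and using that $\deg_\omega(E_i)$ is independent of $j$ gives
\begin{equation*}
	\int_X \left(|\bar\partial_{A_j}\pi_i^{(j)}|^2 + |[\phi_j,\pi_i^{(j)}]|^2\right)dv_g = \int_X \tr\!\left(\theta(A_j,\phi_j)\,\pi_i^{(j)}\right)dv_g - 2\pi\deg_\omega(E_i).
\end{equation*}
The right-hand side is uniformly bounded by Corollary~\ref{cor}(2) together with the pointwise bound $|\pi_i^{(j)}|\le C$. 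Combined with the $C^\infty_{loc}$ convergence $A_j\to A_\infty$ off $\Sigma_{an}$ supplied by the proof of Theorem~\ref{thm:main1}(1), this yields uniform $L^2_{1,loc}$ bounds for $\pi_i^{(j)}$ away from $\Sigma_{an}$.

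Next I would extract a weak $L^2_{1,loc}$ subsequential limit $\pi_i^\infty$, upgraded to strong $L^p_{loc}$ for every $p<\infty$ via Rellich. The identities $(\pi_i^\infty)^2=\pi_i^\infty$ and $(\pi_i^\infty)^*=\pi_i^\infty$ persist under strong $L^p$ convergence, so $\pi_i^\infty$ is a Hermitian projection defining a coherent subsheaf $E_i^\infty\subset E_\infty$, and passage to the limit in $[\phi_j,\pi_i^{(j)}]$ yields $\phi_\infty$-invariance. To promote weak to strong $L^2_{1,loc}$ convergence I would compute $\deg_\omega(E_i^\infty)$ via the same identity (\ref{eq:4.4}), using the strong $L^p$ convergence $\theta(A_j,\phi_j)\to\theta(A_\infty,\phi_\infty)$ from Corollary~\ref{cor}(1) in the Chern--Weil term and lower semicontinuity in the two squared norms. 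By Theorem~\ref{thm:1} the HN type of $E_\infty$ equals that of $E$, so $\deg_\omega(E_i^\infty)=\deg_\omega(E_i)$; this forces equality in the lower-semicontinuity inequality, and hence strong $L^2_{1,loc}$ convergence off $\Sigma_{an}$. Uniqueness of the HN filtration on the twisted Higgs sheaf $E_\infty$ then identifies $\{\pi_i^\infty\}$ with its HN filtration, proving~(1).

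For (2), I would repeat the same argument with the Seshadri filtration $\{E_{s,i}\}$ of the semistable bundle $(E,\bar\partial_{A_0},\phi_0)$ in place of the HN filtration; since $\mu_\omega(E_{s,i})=\mu_\omega(E)$ is constant across $i$, the energy identity yields uniform bounds exactly as before and produces a limiting filtration $\{\pi_{s,i}^\infty\}$ with the prescribed ranks and degrees. The main obstacle throughout is the upgrade from weak to strong $L^2_{1,loc}$ convergence together with the a priori possibility of rank drop in the weak limit across $\Sigma_{an}$; both are neutralised by working strictly off $\Sigma_{an}$ and by exploiting the rigid matching of degrees afforded by Theorem~\ref{thm:1}.
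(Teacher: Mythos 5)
Your proposal follows exactly the Daskalopoulos--Wentworth/Li--Zhang route that the paper itself defers to (the paper's proof of this lemma is a citation to \cite{DW,LZ1}), and the adaptation to the twisted setting — carrying the extra $|[\phi,\pi]|^2$ term through (\ref{eq:4.4}) and using Corollary~\ref{cor} for the $L^\infty$ and $L^p$ control of $\theta$ — is correct. One step is stated more compactly than the logic allows: the line ``By Theorem~\ref{thm:1} the HN type of $E_\infty$ equals that of $E$, so $\deg_\omega(E_i^\infty)=\deg_\omega(E_i)$'' really is a pincer — lower semicontinuity in (\ref{eq:4.4}) gives $\deg_\omega(E_i^\infty)\ge\deg_\omega(E_i)$, while Proposition~\ref{prop4:5} together with $\vec\lambda_\infty=\vec\mu_0$ gives $\deg_\omega(E_i^\infty)\le\sum_{k\le r_i}\lambda_k=\deg_\omega(E_i)$ — and only then does equality force the $L^2_{1,loc}$ norms to converge; you should also invoke the Uhlenbeck--Yau weakly-holomorphic-subbundle regularity to promote the $W^{1,2}$ Hermitian-projection limit $\pi_i^\infty$ to a coherent ($\phi_\infty$-invariant) subsheaf of $E_\infty$. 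Both points are standard and implicit in your closing remark about ``rigid matching of degrees,'' so the argument stands.
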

\begin{proof}
	See (\cite{DW,LZ1}) for details.
\end{proof}

Combining the previous Lemma \ref{l:3}, Corollary \ref{cor} and the fact that
\begin{equation}
	\sqrt{-1}\Lambda_{\omega}F_{A_{\infty}}+[\phi_{\infty},(\phi_{\infty})^{*H_{\infty}}]=\Psi^{HN}(A_{\infty},\phi_{\infty},H_{\infty}),
\end{equation}
we have the following proposition.
\begin{prop}\label{p:l}
	Let $(E, \bar{\partial}_{A_{0}}, \phi_{0})$ be a $V$-twisted Higgs bundle on compact K\"ahler manifold $(X, \omega)$,
	and satisfy the same assumptions as that in Theorem \ref{thm:1}. Then for any $\delta> 0$ and $1\leq p<\infty$, $(E,A_{0},\phi_{0})$ has an $L^{p}$-$\delta$-approximate critical Hermitian metric.
\end{prop}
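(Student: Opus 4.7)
The plan is to prove that the Donaldson heat flow metric $H(t)$ itself, for $t$ sufficiently large, serves as the desired $L^{p}$-$\delta$-approximate critical Hermitian metric on $(E,\bar{\partial}_{A_{0}},\phi_{0})$. The idea is to push the entire problem through the complex gauge equivalence $(A(t),\phi(t))=\sigma(t)\cdot(A_{0},\phi_{0})$ relating the Donaldson heat flow with the Yang--Mills--Higgs flow, so that the convergence results of Section \ref{sec:Con} and Lemma \ref{l:3} become directly applicable.

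First I would exploit the identity $\sigma^{*H_{0}}(t)\sigma(t)=H_{0}^{-1}H(t)$, which realises $\sigma(t):(E,H(t))\rightarrow(E,H_{0})$ as a pointwise isometry, so $|\sigma(t)^{-1}a\sigma(t)|_{H(t)}=|a|_{H_{0}}$ for every $a\in\Gamma(\mbox{End}(E))$. Since $\sigma(t)$ maps the $\phi_{0}$-invariant HN filtration $\{E_{i}\}$ of $(E,\bar{\partial}_{A_{0}},\phi_{0})$ bijectively to the HN filtration $\{\sigma(t)(E_{i})\}$ of $(E,\bar{\partial}_{A(t)},\phi(t))$ with identical slopes $\mu_{i}$, the isometry forces
\begin{equation*}
\pi_{i}^{H(t)}=\sigma(t)^{-1}\pi_{i}^{(t)}\sigma(t),
\end{equation*}
where $\pi_{i}^{(t)}$ is the $H_{0}$-orthogonal projection onto $\sigma(t)(E_{i})$. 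Combining this with the standard gauge-transformation identities $F_{\bar{\partial}_{A_{0}},H(t)}=\sigma(t)^{-1}F_{A(t)}\sigma(t)$ and $[\phi_{0},\phi_{0}^{*H(t)}]=\sigma(t)^{-1}[\phi(t),\phi(t)^{*H_{0}}]\sigma(t)$ yields
\begin{equation*}
\sqrt{-1}\Lambda_{\omega}F_{\bar{\partial}_{A_{0}},H(t)}+[\phi_{0},\phi_{0}^{*H(t)}]-\Psi^{HN}(E,\phi_{0},H(t))=\sigma(t)^{-1}\bigl(\theta(A(t),\phi(t))-\Psi^{HN}(E,\phi(t),H_{0})\bigr)\sigma(t).
\end{equation*}
By the isometry property, the $L^{p}(X,H(t))$-norm of the left-hand side equals the $L^{p}(X,H_{0})$-norm of the bracket on the right, so it suffices to drive the bracket to zero in $L^{p}$.

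To do so, I pick a sequence $t_{k}\rightarrow+\infty$ realising the Uhlenbeck limit $(A_{\infty},\phi_{\infty})$ on $(E_{\infty},H_{\infty})$. Corollary \ref{cor} gives $\theta(A(t_{k}),\phi(t_{k}))\to\theta(A_{\infty},\phi_{\infty})$ strongly in $L^{p}$ for every $1\le p<\infty$. By Lemma \ref{l:3}(i) the projections $\pi_{i}^{(t_{k})}$ converge in $L^{p}$ to the HN projections $\pi_{i}^{\infty}$ of $(E_{\infty},\bar{\partial}_{A_{\infty}},\phi_{\infty})$, and Theorem \ref{thm:1} guarantees that the HN slopes of $(E,\bar{\partial}_{A_{0}},\phi_{0})$ coincide with those of the limit, so
\begin{equation*}
\Psi^{HN}(E,\phi(t_{k}),H_{0})\longrightarrow\Psi^{HN}(A_{\infty},\phi_{\infty},H_{\infty})\ \text{in}\ L^{p}.
\end{equation*}
The holomorphic orthogonal splitting from Theorem \ref{thm:main1}(1) forces $\theta(A_{\infty},\phi_{\infty})=\Psi^{HN}(A_{\infty},\phi_{\infty},H_{\infty})$, since on each Higgs--Hermitian--Einstein summand $E_{\infty}^{i}$ the endomorphism $\theta$ is the constant $\lambda_{i}$, which is exactly the HN slope of that summand. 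Hence the bracket converges to zero in $L^{p}$ along the subsequence, and $H(t_{k})$ is an $L^{p}$-$\delta$-approximate critical Hermitian metric once $k$ is large.

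The main obstacle I anticipate is the absence of any uniform control over the complex gauge transformation $\sigma(t)$, so a direct comparison of $L^{p}$ norms under conjugation by $\sigma(t)$ is unavailable. The isometric identity $|\sigma(t)^{-1}a\sigma(t)|_{H(t)}=|a|_{H_{0}}$ circumvents this difficulty entirely, but at the cost of transferring the HN projection along the flow; supplying strong $L^{p}$ convergence of those projections off the codimension-$\geq 4$ singular set $\Sigma_{an}$ is the genuine analytic input, and is precisely what Lemma \ref{l:3} delivers. Once all three convergences are in hand, the rest of the argument is largely formal.
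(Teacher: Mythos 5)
Your proposal is correct and takes essentially the same route as the paper, which proves this proposition in a single sentence by combining Corollary \ref{cor}, Lemma \ref{l:3}, and the identity $\sqrt{-1}\Lambda_{\omega}F_{A_{\infty}}+[\phi_{\infty},\phi_{\infty}^{*H_{\infty}}]=\Psi^{HN}(A_{\infty},\phi_{\infty},H_{\infty})$. Your explicit use of the isometry $\sigma(t):(E,H(t))\to(E,H_{0})$ to realise the Donaldson-flow metric $H(t_{k})$ as the candidate and transport the error term to the $(A(t),\phi(t))$ side is precisely the bookkeeping the paper leaves implicit.
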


According to Proposition \ref{pp:1}, we can resolve the singularities $\Sigma_{alg}$ by blowing up finitely many times along complex submanifolds, that is, we have a sequence of blow-ups
\begin{equation}
	X_{m}\stackrel{\pi_{m}}\longrightarrow X_{m-1}\stackrel{\pi_{m-1}}\longrightarrow\cdots\stackrel{\pi_{1}}\longrightarrow X_{0}=X.
\end{equation}
Applying Proposition \ref{p:lp}, Theorem \ref{thm:1} and Proposition \ref{p:l} finitely many times, we end up with the following theorems.
\begin{thm}
	Let $(E, \bar{\partial}_{A_{0}}, \phi_{0})$ be a $V$-twisted Higgs bundle on compact K\"ahler manifold $(X, \omega)$.
	Then for any $\delta > 0$ and $1\leq p < +\infty$, $(E, \bar{\partial}_{A_{0}}, \phi_{0})$ has an $L^{p}$- $\delta$-approximate critical Hermitian metric.
\end{thm}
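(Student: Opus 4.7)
The plan is to prove the theorem by descending induction along the tower of blow-ups
\begin{equation*}
X_{m}\longrightarrow X_{m-1}\longrightarrow \cdots \longrightarrow X_{0}=X
\end{equation*}
supplied by Proposition \ref{pp:1}, which resolves the singular set $\Sigma_{alg}$ of the HNS filtration of $(E,\bar{\partial}_{A_{0}},\phi_{0})$. At the top level, the pullback HNS filtration on $X_{m}$ is given by holomorphic subbundles, and the stability-preservation corollary ensures that the graded pieces remain $\omega_{\epsilon}$-stable for all sufficiently small $\epsilon$, where $\omega_{\epsilon}$ is the perturbed K\"ahler metric. The Donaldson-style proposition stated just before Proposition \ref{p:lp} therefore produces, for every $\tilde{\delta}>0$ and every small $\epsilon$, a smooth Hermitian metric $\tilde{H}$ on the pullback Higgs bundle whose mean curvature $\sqrt{-1}\Lambda_{\omega_{\epsilon}}F_{\bar{\partial}_{\tilde{E}},\tilde{H}}+[\tilde{\phi},\tilde{\phi}^{*\tilde{H}}]$ is within $\tilde{\delta}$ of $\Psi(\tilde{F},(\mu_{1,\epsilon},\ldots,\mu_{l,\epsilon}),\tilde{H})$ in $L^{2}(\omega_{\epsilon})$.

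Next I would carry out one descent step. With $\tilde{X}=X_{m}$ and base $X=X_{m-1}$, the hypothesis of Proposition \ref{p:lp} is now in place, and it outputs an $L^{p}$-$\delta'$-approximate critical Hermitian metric on $X_{m-1}$ for every $\delta'>0$ and every $p$ in the narrow range $1<p<1+\tfrac{1}{4k(k-1)}$, where $k$ is the codimension of the submanifold blown up by $\pi_{m}$. This limited-$p$ existence is exactly the hypothesis of Theorem \ref{thm:1}, so that the Harder--Narasimhan type of the Uhlenbeck limit of the YMH flow on $X_{m-1}$ coincides with the HN type of the initial twisted Higgs bundle on $X_{m-1}$. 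Proposition \ref{p:l} then upgrades the existence of $L^{p}$-$\delta'$-approximate critical Hermitian metrics to every $\delta'>0$ and every $1\le p<\infty$ on $X_{m-1}$.

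Now I iterate. The full-range approximation on $X_{m-1}$ again supplies the $L^{2}$-bound required by the hypothesis of Proposition \ref{p:lp} (this time with $\tilde{X}=X_{m-1}$ and $X=X_{m-2}$), and cycling Proposition \ref{p:lp}, Theorem \ref{thm:1}, and Proposition \ref{p:l} produces the full $L^{p}$-$\delta$ statement on $X_{m-2}$. After finitely many such descents we arrive at $X_{0}=X$, which is the conclusion of the theorem.

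The principal obstacle is the narrow admissible range of exponents delivered by a single application of Proposition \ref{p:lp}: it falls well short of $p=2$, which is precisely what the next descent step requires as input. The resolution, and the reason the induction can propagate all the way down to $X$, is the two-stage bootstrap Theorem \ref{thm:1} followed by Proposition \ref{p:l}: matching the HN type of the initial bundle with that of the YMH Uhlenbeck limit is enough to construct $L^{p}$-$\delta$-approximate critical metrics for every $1\le p<\infty$. Thus each descent level must be completed by this bootstrap before the next descent can begin.
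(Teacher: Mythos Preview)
Your proposal is correct and follows precisely the paper's approach: the paper's proof consists of the single sentence ``Applying Proposition \ref{p:lp}, Theorem \ref{thm:1} and Proposition \ref{p:l} finitely many times, we end up with the following theorems,'' and you have accurately unpacked what this iteration entails, including the crucial observation that the narrow $p$-range output of Proposition \ref{p:lp} must be bootstrapped to the full range via Theorem \ref{thm:1} and Proposition \ref{p:l} at each level before the next descent can proceed.
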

\begin{thm}
	Let $(A(t), \phi(t))$ be a smooth solution of the Yang--Mills--Higgs flow on the Hermitian vector bundle $(E, H_{0})$ with initial twisted Higgs pair $(A_{0}, \phi_{0})$, and $(A_{\infty}, \phi_{\infty})$ be a Uhlenbeck limit. Let $E_{\infty}$ denote the vector bundle obtained from $(A_{\infty},\phi_{\infty})$ as that in
	Theorem \ref{thm:main1}. Then the Harder--Narasimhan type of the extended reflexive $V$-twisted Higgs sheaf
	$(E_{\infty}, \bar{\partial}_{A_{\infty}}, \phi_{\infty})$ is same as that of the original $V$-twisted Higgs bundle $(E, \bar{\partial}_{A_{0}}, \phi_{0})$, that is, $\vec{\lambda}_{\infty}= \vec{\mu}_{0}$.
\end{thm}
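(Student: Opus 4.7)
The plan is to combine the two preceding results into an immediate application of Theorem~\ref{thm:1}. The theorem just above asserts that for every $\delta > 0$ and every $1 \leq p < \infty$ there exists a smooth Hermitian metric $H$ on $(E, \bar{\partial}_{A_0}, \phi_0)$ satisfying the $L^p$-approximation inequality \eqref{p2:e}. This is precisely the hypothesis required by Theorem~\ref{thm:1}, which then concludes that the Harder--Narasimhan type $\vec{\lambda}_\infty$ of the limiting reflexive $V$-twisted Higgs sheaf agrees with $\vec{\mu}_0$. Thus the whole argument reduces to verifying that the machinery behind the $L^p$-$\delta$-approximate critical metric genuinely applies to an arbitrary twisted Higgs bundle over $X$, rather than only to one whose HNS filtration is already given by subbundles.

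To spell this verification out: first I would isolate the singular set $\Sigma_{alg} = \bigcup_{i,j}\{Sing(E_{i,j}) \cup Sing(Q_{i,j})\}$ of the HNS filtration, which has complex codimension at least two. By Proposition~\ref{pp:1} together with Hironaka's resolution, we obtain a finite sequence of blow-ups along smooth submanifolds
\begin{equation*}
X_m \xrightarrow{\pi_m} X_{m-1} \xrightarrow{\pi_{m-1}} \cdots \xrightarrow{\pi_1} X_0 = X
\end{equation*}
such that the pullback HNS filtration on $(\tilde E,\tilde\phi)$ over $X_m$ is by $\tilde\phi$-invariant subbundles with stable graded quotients. On $X_m$ one directly constructs, via a Donaldson-type argument, smooth metrics making the approximation \eqref{p:eq} hold for the family $\omega_\epsilon$ at any prescribed $\tilde\delta > 0$. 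Then I would pull the estimate back one blow-up at a time: Proposition~\ref{p:lp} converts an $L^2(\omega_\epsilon)$-approximation on a blow-up into an $L^p(\omega)$-approximation downstairs for $1 < p < 1 + \tfrac{1}{4k(k-1)}$. Because the sequence has only finitely many stages, iterating this transfer eventually yields an $L^p$-$\delta$-approximate critical Hermitian metric on $(E, \bar\partial_{A_0}, \phi_0)$ itself for every $\delta > 0$ and some range of exponents $p > 1$; Proposition~\ref{p:l} handles the remaining $p$'s.

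With the hypothesis of Theorem~\ref{thm:1} in place, the conclusion follows from the monotonicity and convexity of the functionals $\HYM_{\alpha,N}$ along the flow, the convergence identity \eqref{eq:j}, and Lemma~\ref{l:DW2}: the $L^p$-$\delta$-approximate critical metric forces $\HYM_{\alpha,N}(A_\infty,\phi_\infty) = \HYM_{\alpha,N}(\vec\mu_0)$ for a range of $\alpha$, whence $\vec\lambda_\infty = \vec\mu_0$ after choosing $N$ large enough to make both tuples positive.

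The main technical obstacle in this scheme is the propagation of the $L^p$ bound across successive blow-ups, since the approximate exponent $p$ degrades at each step according to the codimension bound $1 < p < 1 + \tfrac{1}{4k(k-1)}$, and one must verify that after the finite chain of blow-ups one still has access to some nontrivial range of exponents above $1$. The key observation that salvages this is that Proposition~\ref{p:lp} only requires an $L^2$-approximation on the blow-up and produces an $L^p$-approximation below, so the $L^2$-approximation can be recovered at the next stage by taking $\delta$ correspondingly smaller, and the exponent loss does not compound in a way that closes up the admissible range. Once this bookkeeping is done, the theorem follows with no further analytic input.
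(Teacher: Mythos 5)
Your plan is exactly the one the paper follows: resolve $\Sigma_{alg}$ by the finite chain of Hironaka blow-ups from Proposition~\ref{pp:1}, produce an approximate critical metric at the top level, descend one blow-up at a time with Proposition~\ref{p:lp}, feed the resulting $L^p$-$\delta$-approximation into Theorem~\ref{thm:1} to equate HN types, and invoke Proposition~\ref{p:l} before the next descent; the paper's own proof is precisely ``apply Proposition~\ref{p:lp}, Theorem~\ref{thm:1} and Proposition~\ref{p:l} finitely many times.''

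The only misstatement is in your closing paragraph, where you say the $L^2$-approximation is ``recovered at the next stage by taking $\delta$ correspondingly smaller.'' Shrinking $\delta$ cannot widen the admissible exponent range; the loop actually closes through Proposition~\ref{p:l}. Once Theorem~\ref{thm:1} yields $\vec{\lambda}_\infty = \vec{\mu}_0$ from the narrow range $1<p<p_0$ that Proposition~\ref{p:lp} supplies, Proposition~\ref{p:l} uses the flow itself (Corollary~\ref{cor}, Lemma~\ref{l:3}, and the identity $\sqrt{-1}\Lambda_\omega F_{A_\infty}+[\phi_\infty,\phi_\infty^{*H_\infty}] = \Psi^{HN}(A_\infty,\phi_\infty,H_\infty)$) to promote the approximation to every $1\le p<\infty$, in particular $p=2$. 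That is what resets the exponent range to full strength before the next application of Proposition~\ref{p:lp}, so the accumulated loss over the finite chain never closes up the admissible window. With that correction, your argument coincides with the paper's.
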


\subsection{Construction of non-trivial holomorphic mappings}

Let $\{E_{i,j}\}$ be the HNS filtration of the $V$-twisted Higgs bundle $(E, \bar{\partial}_{A_{0}}, \phi_{0})$, the associated graded object $Gr^{HNS}(E,\bar{\partial}_{A_{0}},\phi_{0})$ is uniquely determined by the isomorphism class of $(\bar{\partial}_{A_{0}}, \phi_{0})$. Let $(E_{\infty},\bar{\partial}_{A_{\infty}},\phi_{\infty})$ be the limiting $V$-twisted Higgs sheaf. In this subsection, we want to show $Gr^{HNS}(E,\bar{\partial}_{A_{0}},\phi_{0})^{**}\simeq (E_{\infty},\bar{\partial}_{A_{\infty}},\phi_{\infty})$. The key is to construct non-zero holomorphic mappings from the subsheaves in the HNS filtration of $(E,\bar{\partial}_{A_{0}},\phi_{0})$ to  $(E_{\infty},\bar{\partial}_{A_{\infty}},\phi_{\infty})$.  This result can be proved by induction on the length of HNS filtration. The inductive hypotheses on a sheaf $Q$ are following:
\begin{itemize}
	\item[(1)] There is a sequence of $V$-twisted Higgs structures $(A^{Q}_{j}, \phi^{Q}_{j})$ on $Q$ such that $(A^{Q}_{j}, \phi^{Q}_{j})\rightarrow(A^{Q}_{\infty}, \phi^{Q}_{\infty})$ in $C^{\infty}_{loc}$ off $\Sigma_{alg}\cup\Sigma_{an}$;
	\item[(2)] $(A^{Q}_{j}, \phi^{Q}_{j})=g_{j}(A^{Q}_{0}, \phi^{Q}_{0})$ for some $g_{j}\in \mathcal{G}^{\mathbb{C}}(Q)$;
	\item[(3)] $(Q, \bar{\partial}_{A^{Q}_{0}}, \phi^{Q}_{0})$ and $(Q_{\infty}, \bar{\partial}_{A^{Q}_{\infty}}, \phi^{Q}_{\infty})$ extended to $X$ as reflexive $V$-twisted Higgs sheaves with the same HN type;
	\item[(4)] $\|\phi^{Q}_{j}\|_{C^{0}}$ and $\|\sqrt{-1}\Lambda_{\omega}(F_{A^{Q}_{j}})\|_{L^{1}(\omega)}$ is uniformly bounded in $j$.
\end{itemize}
The following proposition is crucial, and its proof is the same as that of Proposition 4.1 of (\cite{LZ1}).

\begin{prop}[\cite{LZ1}] \label{pr:1}
	Let $(X, \omega)$ be a K\"ahler manifold, $(E,\bar{\partial}_{A_{0}},\phi_{0})$ be a $V$-twisted Higgs sheaf on $X$ with Hermitian metric $H_{0}$, $S$ be a $V$-twisted Higgs subsheaf of $(E,\bar{\partial}_{A_{0}},\phi_{0})$, and $(A_{j},\phi_{j})=g_{j}(A_{0}, \phi_{0})$ be a sequence of $V$-twisted Higgs pairs on $E$, where $g_{j}$ is a sequence of complex gauge transformations. Suppose that there exits a sequence of blow-ups: $\pi_{i}: X_{i}\rightarrow X_{i-1}$, $i=1, \cdots, r$ (where $X_{0}=X$, every $\pi_{i}$ is a blow-up with non-singular center; denoting $\pi=\pi_{r}\circ\cdots\circ\pi_{1}$); such that $\pi^{*}E$ and $\pi^{*}S$ are bundles, the pulling back geometric objects $\pi^{*}(A_{0}, \phi_{0})$, $\pi^{*}g_{j}$ and $\pi^{*}H_{0}$ can be extended smoothly on the whole $X_{r}$. Assume that $(A_{j}, \phi_{j})$ converges to $(A_{\infty}, \phi_{\infty})$ outside a closed subset $\Sigma_{an}$ of Hausdorff complex codimension $2$, and $|\Lambda_{\omega}(F_{A_{j}})|_{H_{0}}$ is bounded uniformly in $j$ in $L^{1}(\omega_{0})$. Let $i_{0}:(S, \bar{\partial}_{A_{0}})\rightarrow (E, \bar{\partial}_{A_{0}})$ be the holomorphic inclusion, then there is a subsequence of $g_{j}\circ i_{0}$, up to rescale, converges to a non-zero holomorphic map $f_{\infty}: (S, \bar{\partial}_{A_{0}})\rightarrow(E_{\infty}, \bar{\partial}_{A_{\infty}})$ in $C^{\infty}_{loc}$ off $\Sigma\cup \Sigma_{an}$, and $f_{\infty}\circ \phi_{0}= \phi_{\infty}\circ f_{\infty}$, where $\Sigma$ is the singular set of $S$ and $E$.
\end{prop}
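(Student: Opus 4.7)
The plan is to reduce to the bundle case via the sequence of blow-ups $\pi=\pi_r\circ\cdots\circ\pi_1:X_r\to X$, construct $f_\infty$ as a rescaled weak limit of $g_j\circ i_0$ on $X_r$, and then push it back to $X$. On $X_r$ both $\pi^{*}S$ and $\pi^{*}E$ are locally free and the data $\pi^{*}(A_0,\phi_0)$, $\pi^{*}g_j$, $\pi^{*}H_0$ extend smoothly, so $\pi^{*}(g_j\circ i_0)$ is a smooth section of $\mathrm{Hom}(\pi^{*}S,\pi^{*}E)$. The two identities
\begin{equation*}
\bar\partial_{A_j}\circ(g_j\circ i_0)=(g_j\circ i_0)\circ\bar\partial_{A_0}|_{S},\qquad \phi_j\circ(g_j\circ i_0)=(g_j\circ i_0)\circ\phi_0
\end{equation*}
are immediate from $\bar\partial_{A_j}=g_j\circ\bar\partial_{A_0}\circ g_j^{-1}$, $\phi_j=g_j\circ\phi_0\circ g_j^{-1}$, and the $\phi_0$-invariance of $S$.

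Next I would normalize $f_j=(g_j\circ i_0)/\lambda_j$ with $\lambda_j=\|g_j\circ i_0\|_{L^2(X_r)}$, so that $\|f_j\|_{L^2}=1$. A Bochner-type computation for $\bar\partial$-holomorphic bundle maps under varying connections, combined with the uniform $L^1$-bound on $\sqrt{-1}\Lambda_\omega F_{A_j}$ and the $C^0$-bound on $\phi_j$ from Proposition~\ref{ppp:1}, yields a differential inequality of the form
\begin{equation*}
\Delta|f_j|^{2}\le C\bigl(1+|\phi_j|^{2}+|\Lambda_\omega F_{A_j}|\bigr)|f_j|^{2}.
\end{equation*}
This produces a uniform $L^2_1$ bound on $f_j$ away from any fixed neighborhood of $\pi^{-1}(\Sigma\cup\Sigma_{an})$. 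Extracting a weak limit and using the $C^\infty_{loc}$ convergence $(A_j,\phi_j)\to(A_\infty,\phi_\infty)$ off $\Sigma\cup\Sigma_{an}$ gives a weak solution of
\begin{equation*}
\bar\partial_{A_\infty}f_\infty=f_\infty\circ\bar\partial_{A_0}|_{S},\qquad \phi_\infty\circ f_\infty=f_\infty\circ\phi_0,
\end{equation*}
and elliptic regularity for $\bar\partial$ with smooth coefficients then upgrades the convergence of $f_j$ to $f_\infty$ to $C^\infty_{loc}$ off the bad set.

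The main obstacle is to show $f_\infty\not\equiv 0$, since the normalization only controls global $L^2$-mass, which could a priori concentrate on the bad set. I would handle this exactly as in the untwisted case \cite{LZ1}: the differential inequality above, via a Moser-type mean-value iteration, gives a uniform $L^\infty$ bound on $f_j$ on every compact subset of $X_r\setminus\pi^{-1}(\Sigma\cup\Sigma_{an})$, and since $\Sigma\cup\Sigma_{an}$ has complex codimension at least $2$ in $X$, the unit $L^2$-mass of $f_j$ cannot be entirely absorbed in a shrinking neighborhood of it, so $f_\infty$ retains positive mass on the good locus. Finally, pushing $f_\infty$ down by $\pi$ and applying Hartogs extension across the codimension-$2$ singular set yields the desired non-zero holomorphic map $f_\infty:(S,\bar\partial_{A_0})\to(E_\infty,\bar\partial_{A_\infty})$ intertwining $\phi_0$ and $\phi_\infty$. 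The only new feature beyond the untwisted case is the Higgs-intertwining identity, which survives the limit by the $C^\infty_{loc}$ convergence of both $\phi_j$ and $f_j$.
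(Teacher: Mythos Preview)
The paper does not supply its own argument here; it simply records that the proof is the same as Proposition~4.1 of \cite{LZ1}, and your sketch is precisely an outline of that argument (normalize $g_j\circ i_0$ in $L^2$ on the resolution, run a Bochner/Moser step using the uniform $L^1$ bound on $\Lambda_\omega F_{A_j}$ to obtain local $L^\infty$ control and hence non-vanishing of the limit, then upgrade to $C^\infty_{loc}$ by elliptic regularity and check that the Higgs intertwining persists).

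Two small corrections and one technical point you gloss over. First, the $|\phi_j|^2$ term in your displayed differential inequality is superfluous: the Bochner formula for a $\bar\partial$-holomorphic section of $S^*\otimes E$ involves only $\Lambda_\omega F_{A_0|_S}$ (fixed) and $\Lambda_\omega F_{A_j}$, so your appeal to Proposition~\ref{ppp:1} is unnecessary (and in any case that proposition concerns the flow, not an arbitrary gauge-equivalent sequence as in the hypotheses here). Second, the Hartogs extension you invoke at the end is not part of this proposition, which only asserts the existence of $f_\infty$ off $\Sigma\cup\Sigma_{an}$; in the paper the extension is applied separately inside the proof of Theorem~\ref{thm:main1}(2). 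Finally, the one place where real work hides is that $\pi^*\omega$ is \emph{degenerate} along the exceptional divisors of $X_r$, so one cannot run Moser iteration or the mass-non-concentration argument directly with $\pi^*\omega$; in \cite{LZ1} this is handled by working with the K\"ahler family $\omega_\epsilon=\pi^*\omega+\epsilon\eta$ and proving all estimates uniformly in $\epsilon$, which is where the blow-up hypothesis and the $L^1$ curvature bound genuinely interact. Your phrase ``exactly as in the untwisted case \cite{LZ1}'' covers this, but it is the substantive step.
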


The proof of the following lemma is standard and can be found in \cite{Ko}.
\begin{lem}\label{l:m}
Let $(E_{1}, \phi_{1})$ and $(E_{2}, \phi_{2})$ be semistable $V$-twisted Higgs sheaves with $rank(E_{1})=rank(E_{2})$ and $\deg(E_{1})=\deg(E_{2})$, let $f:E_{1}\rightarrow E_{2}$ be a nonzero sheaf homomorphism satisfying $f\circ\phi_{1}=\phi_{2}\circ f$. If $(E_{1}, \phi_{1})$ is stable, then $f$ is an isomorphism.
\end{lem}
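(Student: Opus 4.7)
The plan is to follow the classical argument for morphisms between (semi)stable sheaves, carried out $\phi$-equivariantly. The hypothesis $f\circ\phi_{1}=\phi_{2}\circ f$ immediately implies that $K:=\ker f$ is a $\phi_{1}$-invariant coherent subsheaf of $E_{1}$ and $I:=\mathrm{im}\,f$ is a $\phi_{2}$-invariant coherent subsheaf of $E_{2}$, so the (semi)stability hypotheses apply to $K$ and $I$.

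First I would prove that $f$ is injective. Assume for contradiction that $K\neq 0$. Since $E_{1}$ is torsion-free (being semistable) and $f\not\equiv 0$, one has $0<\mathrm{rank}(K)<\mathrm{rank}(E_{1})$, so the stability of $(E_{1},\phi_{1})$ yields $\mu_{\omega}(K)<\mu_{\omega}(E_{1})$. From the short exact sequence $0\to K\to E_{1}\to I\to 0$, a routine slope computation gives
\begin{equation*}
\mu_{\omega}(I)=\frac{\deg_{\omega}(E_{1})-\deg_{\omega}(K)}{\mathrm{rank}(E_{1})-\mathrm{rank}(K)}>\mu_{\omega}(E_{1})=\mu_{\omega}(E_{2}),
\end{equation*}
with $0<\mathrm{rank}(I)<\mathrm{rank}(E_{2})$, contradicting the semistability of $(E_{2},\phi_{2})$. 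Hence $K=0$.

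To upgrade injectivity to an isomorphism I would study $\det f:\det E_{1}\to\det E_{2}$, which is a nonzero morphism (because $f$ is fibrewise injective at generic points) between line bundles of the same $\omega$-degree. Its zero divisor is effective of $\omega$-degree $\deg_{\omega}(E_{2})-\deg_{\omega}(E_{1})=0$, hence empty on the compact K\"ahler manifold $X$; so $\det f$ is nowhere vanishing, and $f$ restricts to an isomorphism of vector bundles on the common locally free locus $U\subset X$, whose complement has complex codimension at least two. Since in the intended applications $E_{1}$ and $E_{2}$ are reflexive, the identity $E_{i}=j_{*}(E_{i}|_{U})$ for the inclusion $j:U\hookrightarrow X$ extends $(f|_{U})^{-1}$ uniquely to a global inverse, so $f$ is a sheaf isomorphism. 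The main obstacle is the slope computation in the injectivity step; the rest is essentially formal once one is careful to apply semistability of $(E_{2},\phi_{2})$ to the torsion-free subsheaf $I\subset E_{2}$ rather than to the possibly torsion-carrying quotient $E_{1}/K$.
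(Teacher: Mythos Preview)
Your argument is correct and is precisely the standard one the paper defers to by citing Kobayashi's book without reproducing any details. Your caveat about reflexivity in the second step is well taken: as literally stated for arbitrary torsion-free sheaves the conclusion can fail (e.g.\ $\mathcal{I}_{Z}\hookrightarrow\mathcal{O}_{X}$ with $\mathrm{codim}\,Z\geq 2$), but in the paper's application the sheaves involved are reflexive on the relevant open set, so your determinant-plus-Hartogs argument applies.
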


\begin{proof}[Proof of Theorem \ref{thm:main1} (2)]
	The proof is the same as in \cite[Sec. 5]{LZ1}, which we write here for the convenience of the reader.	
	
	Let $S=E_{1,1}$ be the first stable $V$-twisted Higgs subsheaf corresponding to the HNS filtration $\{E_{i,j}\}$ of $(E, \bar{\partial}_{A_{0}}, \phi_{0})$, $\pi:\tilde{X}\rightarrow X$ be the resolution of singularities $\Sigma_{alg}$. Then the filtration of $\tilde{E}=\pi^{*}E$ is given by subbundles $\tilde{E}_{i,j}$, isomorphic to $E_{i,j}$ off $\tilde{\Sigma}=\pi^{-1}(\Sigma_{alg})$. Setting $(\tilde{A}_{j}, \tilde{\phi}_{j})=\pi^{*}(A_{j}, \phi_{j})$ and $\tilde{g}_{j}=\pi^{*}g_{j}$, then we have $(\tilde{A}_{j},\tilde{\phi}_{j})=\tilde{g}_{j}(\tilde{A}_{0},\tilde{\phi}_{0})$. By Theorem \ref{thm:main1}, we know that $(\tilde{A}_{j}, \tilde{\phi}_{j})$ converges to $(\tilde{A}_{\infty}, \tilde{\phi}_{\infty})$ in $C^{\infty}_{loc}$ topology outside $\pi^{-1}(\Sigma_{alg}\cup\Sigma_{an})$. By Corollary \ref{cor} and the uniform $C^{0}$ bound on $\phi(t)$, we have $\|\sqrt{-1}\Lambda_{\omega_{0}}(F_{\tilde{A}_{j}})\|_{L^{\infty}}$, specially $\|\sqrt{-1}\Lambda_{\omega_{0}}(F_{\tilde{A}_{j}})\|_{L^{1}}$ is uniformly bounded in $j$, where $\omega_{0}=\pi^{*}\omega$.
	
	Using Proposition \ref{pr:1}, we have a non-zero smooth $\tilde{\phi}$-invariant holomorphic map $\tilde{f}_{\infty}:\tilde{S} \rightarrow\tilde{E}_{\infty}$ off $\pi^{-1}(\Sigma_{alg}\cup\Sigma_{an})$. Since $\tilde{S}$ is isomorphic to $S$ off $\tilde{\Sigma}$, then we obtain a non-zero smooth $\phi$-invariant holomorphic map $f_{\infty}:S\rightarrow (E_{\infty},\bar{\partial}_{A_{\infty}})$ on $X\setminus\Sigma_{an}\cup\Sigma_{alg}$. By Hartog's theorem, $f_{\infty}$ extends to a $V$-twisted Higgs sheaf homomorphism $f_{\infty}:(S, \phi_{0})\rightarrow (E_{\infty}, \bar{\partial}_{A_{\infty}}, \phi_{\infty})$ on $X$.
	
	Let $\pi^{(j)}_{1}$ denotes the projection to $g_{j}(S)$. By the Lemma \ref{l:3}, we know that $\pi_{1}^{(j)}\rightarrow \pi_{1}^{\infty}$ in $L^{p}\cap L_{1,loc}^{2}$ off $\Sigma_{an}$ and $\pi^{\infty}_{1}$ determines a $V$-twisted Higgs subsheaf $E^{\infty}_{1,1}$ of $(E_{\infty}, \bar{\partial}_{A_{\infty}}, \phi_{\infty})$, with $rank(E^{\infty}_{1,1})=rank(S)$ and $\mu(E^{\infty}_{1,1})=\mu(S)$. Since $(E_{\infty}, \bar{\partial}_{A_{\infty}}, \phi_{\infty})$ and $(E_{0}, \bar{\partial}_{A_{0}}, \phi_{0})$ have the same HN type, thus we have the $V$-twisted Higgs subsheaf $(E^{\infty}_{1,1}, \phi_{\infty})$ is semistable and
	\begin{equation}
		f_{\infty}:S\rightarrow E^{\infty}_{1,1}.
	\end{equation}
	Recall that $S=E_{1,1}$ is $V$-twisted Higgs stable. By the Lemma \ref{l:m}, we see that the non-zero holomorphic map $f_{\infty}$ must be injective on $X\setminus\Sigma_{an}\cup\Sigma_{alg}$ and $E^{\infty}_{1,1}$ must be a stable $V$-twisted Higgs subsheaf of $(E_{\infty}, \bar{\partial}_{A_{\infty}}, \phi_{\infty})$.
	
	Let $\{e_{\alpha}\}$ be a local frame of $S$, and $H_{j,\alpha\bar{\beta}}=\langle g_{j}(e_{\alpha}), g_{j}(e_{\beta}) \rangle_{H_{0}}$. We can write the orthogonal projection $\pi^{(j)}_{1}$ as
	\begin{equation}
		\pi^{(j)}_{1}(X)=\langle X, g_{j}(e_{\beta})\rangle_{H_{0}} H_{j}^{\alpha\bar{\beta}}g_{j}(e_{\alpha})
	\end{equation}
	for any $X\in \Gamma(E)$, where $(H_{j}^{\alpha\bar{\beta}})$ is the inverse of the matrix $(H_{j,\alpha\bar{\beta}})$. Because $g_{j}\circ i_{0}\rightarrow f_{\infty}$ in $C^{\infty}(\Omega)$, and $f_{\infty}$ is injective on $X\setminus\Sigma_{an}\cup\Sigma_{alg}$, then we can prove that $\pi^{(j)}_{1}\rightarrow \pi^{\infty}_{1}$ in $C^{\infty}_{loc}$ off $\Sigma_{an}\cup\Sigma_{alg}$.
	
	Let $Q=E/S$, then we have $Gr^{HNS}(E, \bar{\partial}_{A_{0}}, \phi_{0})=S\oplus Gr^{HNS}(Q, \bar{\partial}_{A^{Q}_{0}}, \phi^{Q}_{0})$. Write the orthogonal holomorphic decomposition $(E^{\infty}, \bar{\partial}_{A_{\infty}}, \phi_{\infty})=E^{\infty}_{1}\oplus Q_{\infty}$, where $Q_{\infty}=(E^{\infty}_{1})^{\perp}$. Using Lemma 5.12 in \cite{Da}, we can choose a sequence of unitary gauge transformation $u_{j}$ such that $\pi^{(j)}_{1}=u_{j}\tilde{\pi}_{j}u_{j}^{-1}$ where $\tilde{\pi}_{j}(E)=\pi^{\infty}_{1}(E)=E^{\infty}_{1}$ and $u_{j}\rightarrow Id_{E}$ in $C^{\infty}_{loc}$ on $X\setminus (\Sigma_{alg}\cup\Sigma_{an})$. It is easy to check that
	$u_{j}(Q_{\infty})=(\pi^{(j)}_{1}(E))^{\perp}$. Let $p:Q \rightarrow S^{\perp}$ be the $C^{\infty}$ bundle isomorphism outside singularity set. Noting the unitary gauge transformation $u_{0}:Q_{\infty}\rightarrow S^{\perp}$, and considering the induced $V$-twisted Higgs pair on $Q$, defined by
	\begin{equation}
	\begin{split}
		D_{A_{j}^{Q}}=&p^{-1}\circ u_{0}\circ u_{j}^{-1}\circ \pi_{j}^{\perp}\circ D_{A_{j}} \circ \pi_{j}^{\perp} \circ	u_{j}\circ u_{0}^{-1}\circ p,\\
		\phi_{j}^{Q}=p^{-1}\circ u_{0}&\circ u_{j}^{-1}\circ \pi_{j}^{\perp}\circ \phi_{j} \circ \pi_{j}^{\perp} \circ u_{j}\circ u_{0}^{-1}\circ p\in\Gamma(V\otimes \mbox{End}(Q)).
		\end{split}
	\end{equation}
	Let
	\begin{equation}
		h_{j}=p^{-1}\circ u_{0}\circ u_{j}^{-1}\circ \pi_{j}^{\perp}\circ g_{j}\circ \pi_{0}^{\perp}\in \mathcal{G}^{\mathbb{C}}(Q).
	\end{equation}
	Then, we have
	\begin{equation}
		\begin{split}
			\bar{\partial}_{A_{j}^{Q}}=&h_{j}\circ \bar{\partial}_{A_{0}^{Q}} \circ h_{j}^{-1},\\
			\partial_{A_{j}^{Q}}=&(h_{j}^{*})^{-1}\circ \partial_{A_{0}^{Q}}\circ h_{j}^{*},
		\end{split}
	\end{equation}
	\begin{equation}
		\begin{split}
			\phi_{j}^{Q}= h_{j}\circ \phi_{0}^{Q} \circ h_{j}^{-1},\\
		\end{split}
	\end{equation}
	and
	\begin{equation}
		\bar{\partial}_{A_{j}^{Q}}\phi_{j}^{Q}=0,\ \ \phi_{j}^{Q}\wedge \phi_{j}^{Q}=0,
	\end{equation}
	where we have used $h_{j}^{-1}=\pi_{0}^{\perp} \circ g_{j}^{-1}\circ\pi_{j}^{\perp}\circ u_{j}\circ u_{0}^{-1}\circ p$. On the other hand, by the definition, it is easy to check that $u_{0}^{*}p(A_{j}^{Q}, \phi_{j}^{Q})\rightarrow (A_{\infty}^{Q_{\infty}}, \phi_{\infty}^{Q_{\infty}})$ in $C^{\infty}_{loc }$. Now we check the third statement in the inductive hypotheses. Let's consider the Gauss--Codazzi equation on $(\pi_{1}^{(j)}(E))^{\perp}\simeq Q^{j}$,
	\begin{equation}
		\begin{split}
			F_{A_{Q^{j}}}=&(\pi_{1}^{(j)})^{\perp}\circ F_{A_{j}}\circ (\pi_{1}^{(j)})^{\perp} +\partial_{A_{j}}\pi_{1}^{(j)}\wedge \bar{\partial}_{A_{j}}\pi_{1}^{(j)},\\
			[\phi_{j}^{Q},(\phi_{j}^{Q})^{*}]=&(\pi_{1}^{(j)})^{\perp}\circ[\phi_{j},(\phi_{j})^{*}]\circ (\pi_{1}^{(j)})^{\perp} +[\pi_{1}^{(j)},\phi_{j}^{*}]\circ[\phi_{j},\pi_{1}^{(j)}],
		\end{split}
	\end{equation}
	where $D_{A_{Q^{j}}}=(\pi_{1}^{(j)})^{\perp}\circ D_{A_{j}}\circ (\pi_{1}^{(j)})^{\perp}$. Setting the $V$-twisted Higgs field $\phi_{Q^{j}}=(\pi_{1}^{(j)})^{\perp}\circ \phi_{j}\circ (\pi_{1}^{(j)})^{\perp}$,  we have
	\begin{equation}
		\begin{split}
			\int_{X}|\sqrt{-1}&\Lambda_{\omega}F_{A_{j}^{Q}}+[\phi_{j}^{Q} , (\phi_{j}^{Q})^{*}]-\Psi^{HN}(A_{j}^{Q}, \phi_{j}^{Q}, H_{0})|\frac{\omega^{n}}{n!}\\
			=&\int_{X}|\sqrt{-1}\Lambda_{\omega}F_{A_{Q^{j}}}+[\phi_{Q^{j}} , (\phi_{Q^{j}})^{*}]-\Psi^{HN}(A_{Q^{j}}, \phi_{Q^{j}}, H_{0})|\frac{\omega^{n}}{n!}\\
			=&\int_{X}|(\pi_{1}^{(j)})^{\perp}\{\sqrt{-1}\Lambda_{\omega}F_{A_{j}}+[\phi_{j} , (\phi_{j})^{*}
			]-\Psi^{HN}(A_{j}, \phi_{j}, H_{0})\}(\pi_{1}^{(j)})^{\perp}\\
			&+\sqrt{-1}\Lambda_{\omega }(\partial_{A_{j}}\pi_{1}^{(j)}\wedge \bar{\partial}_{A_{j}}\pi_{1}^{(j)})+[\pi_{1}^{(j)},\phi_{j}^{*}]\circ[\phi_{j},\pi_{1}^{(j)}]|\frac{\omega^{n}}{n!}\\
			\leq &  \int_{X}|\sqrt{-1}\Lambda_{\omega}F_{A_{j}}+[\phi_{j},(\phi_{j})^{*}]-\Psi^{HN}(A_{j}, \phi_{j}, H_{0})| +| \bar{\partial}_{A_{j}}\pi_{1}^{(j)}|^{2}\\
			&+|[\phi_{j}, \pi_{1}^{(j)}]|^{2}\frac{\omega^{n}}{n!}\\
			 \rightarrow & 0.
		\end{split}
	\end{equation}
	Since  $C^{0}$ norm of $\phi_{j}$ is uniformly bounded, then $\| \phi_{j}^{Q}\|_{C^{0}}$ and $\|\sqrt{-1}\Lambda _{\omega }(F_{A_{ j}^{Q}})\|_{L^{1}(\omega)}$ is uniformly bounded  in $j$.  So, $(Q , A_{j}^{Q} , \phi_{j}^{Q})$ satisfy the inductive hypotheses. Since we can resolve the singularity set $\Sigma_{alg}$ by blowing up finitely many times with non-singular center, and the pulling back of the HNS filtration is given by subbundles. The sheaf $Q$  and every geometric objects which we considered  are induced by the HNS filtration, so their pulling back are all smooth. Using Proposition \ref{pr:1} again,  by induction  we have
	\begin{equation}
		E_{\infty}\simeq Gr^{HNS
		}(E, \bar{\partial}_{A_{0}},\phi_{0})=\oplus_{i=1}^{l}\oplus_{j=1}^{l_{i}}Q_{i,j}
	\end{equation}
	on $X\setminus (\Sigma_{alg}\cup \Sigma_{an})$. By Theorem \ref{thm:main1}, we know that  $(E_{\infty}, \bar{\partial}_{A_{\infty}}, \phi_{\infty})$ can be extended to the whole $X$ as a reflexive $V$-twisted Higgs sheaf. By the uniqueness of reflexive extension in \cite{Siu}, we know that there exists a sheaf isomorphism
	\begin{equation}
		f:  Gr^{HNS
		}(E , \overline{\partial }_{A_{0}} , \phi_{0})^{**}\rightarrow   (E_{\infty}, \bar{\partial }_{A_{\infty}}, \phi_{\infty})
	\end{equation}
	on $X$. This completes the proof.
\end{proof}

\medskip

\noindent {\bf  Acknowledgements:} The authors thank Prof. Xi Zhang for his consistent encouragement. The research was supported by the National Key R and D Program of China 2020YFA0713100. The first and second authors are partially supported by the Natural Science Foundation of China [Grant Numbers 12141104 and 11721101]. The third author is supported by the Natural Science Foundation of China [Grant Number 12201001], the Natural Science Foundation of Anhui Province [Grant Number 2108085QA17], the Natural Science Foundation of Universities of Anhui Province [Grant Number KJ2020A0009]

\medskip

\noindent {\bf Conflicts of Interest:} The authors declare no conflicts of interest.

\end{document}